\journal{CAMWA}
\newtheorem{theorem}{Theorem}[section]
\newtheorem{lemma}[theorem]{Lemma}
\newtheorem{assumption}[theorem]{Assumption}
\theoremstyle{definition}
\newtheorem{definition}[theorem]{Definition}
\newtheorem{corollary}[theorem]{Corollary}
\theoremstyle{remark}
\newtheorem{remark}[theorem]{Remark}
\newcommand{\reftext}[1]{{#1}}
\newcommand{\doi}[1]{\url{http://dx.doi.org/#1}}
\begin{document}
	\begin{frontmatter}
		
		\title{Analytic and Gevrey class regularity\\ for parametric semilinear reaction-diffusion problems\\ and 
			applications in uncertainty quantification}
		
		%% use optional labels to link authors explicitly to addresses:
		%% \author[label1,label2]{}
		%% \affiliation[label1]{organization={},
			%%             addressline={},
			%%             city={},
			%%             postcode={},
			%%             state={},
			%%             country={}}
		%%
		%% \affiliation[label2]{organization={},
			%%             addressline={},
			%%             city={},
			%%             postcode={},
			%%             state={},
			%%             country={}}
		
		\author[UOL]{Alexey Chernov}
		\ead{alexey.chernov@uni-oldenburg.de}
		\author[UOL]{T{\`u}ng L{\^e}}
		\ead{tung.le@uni-oldenburg.de}
\begin{abstract}
	We investigate a class of parametric elliptic semilinear partial differential
	equations of second order with homogeneous essential boundary conditions,
	where the coefficients and the right-hand side (and hence the solution)
	may depend on a parameter. This model can be seen as a reaction-diffusion
	problem with a polynomial nonlinearity in the reaction term. The efficiency
	of various numerical approximations across the entire parameter space is
	closely related to the regularity of the solution with respect to the parameter.
	We show that if the coefficients and the right-hand side are analytic or
	Gevrey class regular with respect to the parameter, the same type of parametric
	regularity is valid for the solution. The key ingredient of the proof is
	the combination of the alternative-to-factorial technique from our previous
	work \citep{ChernovLe2023} with a novel argument for the treatment of the
	power-type nonlinearity in the reaction term. As an application of this
	abstract result, we obtain rigorous convergence estimates for numerical
	integration of semilinear reaction-diffusion problems with random coefficients
	using Gaussian and Quasi-Monte Carlo quadrature. Our theoretical findings
	are confirmed in numerical experiments.
\end{abstract}
%
%\begin{abstract}[class=graphical]
%%%\begin{figure} \includegraphics{<aid>fab} \end{figure} \abstext{}
%\end{abstract}
%
%\begin{abstract}[class=author-highlights,title=Highlights]
%\begin{itemize} %\item
%\end{itemize}
%\end{abstract}
%
%\begin{keyword}
%\kwd{???}
%\end{keyword}
\begin{keyword}
	semilinear problems \sep reaction-diffusion\sep parametric regularity analysis \sep numerical integration \sep Quasi-Monte Carlo methods
	
	\MSC[2023] 65N25 \sep 65C30 \sep 65D30 \sep 65D32 \sep 65N30
\end{keyword}

\end{frontmatter}
%
%%*************** Text entry area ******************%%
%s1 #&#
\section{Introduction}
%%LEAP%%%\label{sec1}
\label{sec:_intro_VTEX1}

Elliptic semilinear problems arise in numerous applications in natural
sciences and engineering. Prominent examples are reaction-diffusion-type
problems with nonlinear reaction (reproduction) terms for modelling of
various processes such as phase separation, combustion, soil-moisture-physics,
biological population genetics, etc. For analysis of parametric semilinear
problems we refer to works by Hansen and Schwab \cite{HansenSchwab13},
where the particular case of a stochastic parameter perturbation has been
addressed, see also the recent work \cite{bahn2023semilinear} for semilinear
eigenvalue problems under uncertainty. The regularity of the solution of
the problem with respect to the parameter is important for construction
of efficient numerical approximations in the parameter domain. For example,
if the quantity of interest is solution's average value over a prescribed
parameter domain, Monte Carlo and Quasi-Monte Carlo methods can be applied
for the numerical integration. However, the use of Quasi-Monte Carlo integration
does only pay off if the solution features certain higher regularity properties.
In this paper, we investigate the parametric regularity
of the solution to a semilinear problem, focusing on two norms:
$H^{1}_{0}(D)$ and $H^{2}(D)$. Subsequently, we illustrate their implications
in Gauss-Legendre quadrature and random shift lattice rules through numerical
experiments.

Let us consider a prototypic real second-order elliptic semilinear partial
differential equation of the general form
%
%e1 #&#
\begin{align}
\label{gen_equation_VTEX1}
-C_{m}^{2}\nabla \cdot (\boldsymbol{a}(\boldsymbol{x},
\boldsymbol{y})\nabla u(\boldsymbol{x},\boldsymbol{y})) +b(
\boldsymbol{x},\boldsymbol{y}) \left [u(\boldsymbol{x},\boldsymbol{y})
\right ]^{m} &= C_{m}\,f(\boldsymbol{x},\boldsymbol{y}),
&
(\boldsymbol{x},\boldsymbol{y})\in D \times U,
\nonumber\\
u(\boldsymbol{x},\boldsymbol{y})&=0,
&
(\boldsymbol{x},\boldsymbol{y}) \in \partial D \times U,
%
%%LEAP%%%\label{eq1}
\end{align}
where the derivative operator $\nabla $ acts in the physical variable
$\boldsymbol{x}\in D$, and $D$ is a bounded Lipschitz domain in
$\mathbb R^{d}$. The above semilinear problem could be reduced to linear
by choosing $b\equiv 0$ or $m= 1$.
For the parametric regularity of linear problems, refer
to works such as
\cite{CohenDevoreSchwab2010,CohenDevoreSchwab2011,KuoSchwabSloan2012}.
The vector of parameters
$\boldsymbol{y}= (y_{1},y_{2},\dots ) \in U$ has either finitely many or
countably many components. For example, if $\boldsymbol{y}$ is a random
parameter, the model with
$U:= [-\frac{1}{2},\frac{1}{2}]^{\mathbb N}$ and
$\boldsymbol{y}\in U$ being a countably-dimensional vector of independently
and identically distributed uniform random variables has been frequently
used in the literature
\cite{CohenDevoreSchwab2010,CohenDevoreSchwab2011,KuoSchwabSloan2013,KuoNuyens2016}.
The necessary restrictions on the power $m\in \mathbb{N}$ and the dimension
$d$ of the domain $D$ fall within the range specified in \reftext{\eqref{cond:_m_d_restriction_VTEX1}} throughout this paper. We denote the corresponding
set of parameters $(d,m)$ by $\mathcal{M}$, as outlined in
\cite{HansenSchwab13}
%
%e2 #&#
\begin{equation}
\label{cond:_m_d_restriction_VTEX1}
\begin{split} d=1 \text{ or } d=2:& \quad m\in \mathbb N,
	\\
	d=3: H^{1}_{0}(D) \hookrightarrow L^{6}(D)& \quad \text{hence } 1\leq m
	\leq 5,
	\\
	d=4: H^{1}_{0}(D) \hookrightarrow L^{4}(D)& \quad \text{hence } 1\leq m
	\leq 3,
	\\
	d=5: H^{1}_{0}(D) \hookrightarrow L^{10/3}(D)& \quad \text{hence } 1
	\leq m\leq 2,
	\\
	d=6: H^{1}_{0}(D) \hookrightarrow L^{3}(D)& \quad \text{hence } 1\leq m
	\leq 2,
	\\
	d\geq 7:& \quad m=1,
\end{split}
%
%%LEAP%%%\label{eq2}
\end{equation}
and $C_{m}$ is the constant of Sobolev embedding
$H^{1}_{0}(D)\hookrightarrow L^{m+1}(D)$, see e.g.
\cite{Mizuguchi2017}. Clearly, the rescaling of \reftext{\eqref{gen_equation_VTEX1}} with
$C_{m}$ is made for the sake of convenience and has no effect in the parametric
regularity of the solution. Without loss of generality, in the following
we assume that the coefficients
$a(\cdot ,\boldsymbol{y}),b(\cdot ,\boldsymbol{y})$ and
$\left \|{f(\cdot ,\boldsymbol{y})}\right \|_{H^{-1}(D)}$ admit the uniform
bounds
%
%e3 #&#
\begin{equation}
\label{ab-bounds}
1 \leq a(\boldsymbol{x},\boldsymbol{y})\leq \frac{\overline{a}}{2} ,
\qquad \left |b(\boldsymbol{x},\boldsymbol{y})\right | \leq
\frac{\overline{b}}{2}, \qquad \left \|{f(\cdot ,\boldsymbol{y})}
\right \|_{H^{-1}(D)} \leq \frac{\overline{f}}{2}
%%LEAP%%%\label{eq3}
\end{equation}
for all $\boldsymbol{y}\in U$ and almost all $\boldsymbol{x}\in D$. Thus,
for every fixed $\boldsymbol{y}\in U$ and under additional assumptions,
the problem \reftext{\eqref{gen_equation_VTEX1}} is well-posed, see for example~\cite{HansenSchwab13}. Since the coefficients depend on the parameters
$\boldsymbol{y}$, the solution $u$ will depend on $\boldsymbol{y}$ as well.
Particularly, if $\boldsymbol{y}$ is random, then
$u(\boldsymbol{x},\boldsymbol{y})$ will be random too.

In this paper we present a rigorous regularity analysis for the solution
$u$ with respect to the parameter $\boldsymbol{y}$ in the general case
where the given data $a, b$ and $f$ are infinitely differentiable functions
of $\boldsymbol{y}$ belonging to the Gevrey class $G^{\delta}$ for some
fixed $\delta \geq 1$. The scale of Gevrey classes is a nested scale of
the parameter $\delta $ that fills the gap between analytic and
$C^{\infty}$ functions
%
%e4 #&#
\begin{equation}
\label{Gdelta-nested}
{\mathcal A}= G^{1} \subset \, G^{\delta} \subset G^{
	\delta '} \subset C^{\infty}, \qquad
1 < \delta < \delta '.
%%LEAP%%%\label{eq4}
\end{equation}
The case of analytic functions (i.e.
${\mathcal A}=G^{1}$, Gevrey-$\delta $-class function
with $\delta = 1$) is the simplest and arguably most important of this
scale and has been addressed for parametric/stochastic semilinear problem
before, see \cite{HansenSchwab13}. The above mentioned work considers the
coefficient represented by an affine parametrization and proves the analyticity
of the solution using elegant complex analysis arguments.

Besides the complex variable argument, the real-variable argument is an alternative 
powerful tool to prove the analytic regularity of the parametric solution. For the
analysis of the eigenvalue problems we refer to \cite{Gilbert2019}. However,
the direct application of the real-variable argument typically leads to
suboptimal estimates. To overcome this, in \cite{ChernovLe2023}, we suggest
a modified argument, namely \emph{alternative-to-factorial technique}, and
obtain optimal regularity for the eigenvalue problem. The aim of this paper
is to extend this approach to the case of parametric
semilinear problems.

The structure of the paper is as follows. In Section~\ref{sec:_Remedy_VTEX1} we introduce the falling factor notation, which is the
main tool for our \emph{alternative-to-factorial technique}, originally introduced
in \cite{ChernovLe2023}. In Section~\ref{sec:_Gevrey_VTEX1} we introduce Gevrey-classes 
and formulate the regularity assumptions on the coefficients of
the semilinear problem \reftext{\eqref{gen_equation_VTEX1}}. In Section~\ref{sec:EVPtheory} we summarize the properties of elliptic semilinear
problems needed for the forthcoming regularity analysis. In Section~\ref{sec:_main_result_VTEX1}
we present the proof of the main results, namely the Gevrey-$\delta $ class
regularity for the solution with respect to the $H^{1}_{0}(D)$-seminorm
in \reftext{Theorem~\ref{gevrey_regularity_for_semilinear_VTEX1}} and the higher spatial
regularity results in \reftext{Theorem~\ref{thm:2}}. The meaning and validity of
the main regularity results is illustrated by the applications and numerical
experiments in Section~\ref{sec:_num_exp_VTEX1}.

%s2 #&#
\section{Preliminaries}
%%LEAP%%%\label{sec2}
\label{sec:_Preliminary_VTEX1}

%s2.1 #&#
\subsection{The falling factorial estimates}
%%LEAP%%%\label{sec2.1}
\label{sec:_Remedy_VTEX1}

The deficiency of the real-variable inductive argument for nonlinear problems
is a consequence of the Leibniz product rule and the triangle inequality.
It can be seen already in one-dimensional case,
see \cite[Section 2.1]{ChernovLe2023} and
\cite[Chapter 1]{Krantz2002}. To overcome these difficulties, we utilize the \emph{alternative-to-factorial technique} as introduced in
\cite[Section 2.2]{ChernovLe2023}. To summarize this we collect some elementary
results on the falling factorial as follows.

For a given $q \in \mathbb R$ and a non-negative integer
$n \in \mathbb N_{0}$ the \emph{falling factorial} is defined as
%
%e5 #&#
\begin{equation}
\label{def-fallfact}
(q)_{n} := \left \{
\begin{array}{c@{\quad}l}
	1, & n=0,
	\\
	q(q-1)\dots (q-n+1), & n \geq 1.
\end{array}
\right .
%%LEAP%%%\label{eq5}
\end{equation}
For $q<1$ the falling factorial $(q)_{n}$ is a sign-alternating sequence
of $n$. To further simplify the notation and avoid keeping track of the
sign alteration, we denote the
\emph{absolute value of the falling factorial of $\frac{1}{2}$} by
\begin{equation*}
\left [\tfrac{1}{2} \right ]_{n} := \left |\left (\tfrac{1}{2}
\right )_{n}\right |.
\end{equation*}
This notation appears somewhat non-standard, but quite convenient, as we
will see in the forthcoming analysis. The two sided-estimate
%
%e6 #&#
\begin{align}
\label{ff-estimates}
\left [\tfrac{1}{2} \right ]_{n} \leq n! \leq 2 \cdot 2^{n} \left [
\tfrac{1}{2} \right ]_{n} ,
%%LEAP%%%\label{eq6}
\end{align}
is rather crude but sufficient for our analysis, see
\cite[Section 2.1]{ChernovLe2023} for a refined version.

The following combinatorial identities are remarkable properties of the
falling factorial. The first and the second estimates in \reftext{\eqref{ff-main-id-2}} are stated here for a shifted summation range, cf.
\cite[Lemma 2.3]{ChernovLe2023}, and, thus, require new proofs given below.
%
%l2.1 #&#
\begin{lemma}
%%LEAP%%%\label{lem2.1}
\label{lem:ff-main-id-2}
For all integers $n\geq 1$ and $k\geq 2$ the following identities hold
%
%e7 #&#
\begin{equation}
	\label{ff-main-id-2}
	\sum _{i=1}^{n} \binom{n}{i} \left [\tfrac{1}{2} \right ]_{i} \,
	\left [\tfrac{1}{2} \right ]_{n+1-i} = \left [\tfrac{1}{2} \right ]_{n+1}, 
	\qquad \sum _{i=1}^{k-1} \binom{k}{i} \left [\tfrac{1}{2} \right ]_{i}
	\, \left [\tfrac{1}{2} \right ]_{k-i} = 2 \left [\tfrac{1}{2} \right ]_{k},
	\qquad \sum _{i=1}^{k} \binom{k}{i} \left [\tfrac{1}{2} \right ]_{i}
	\, \left [\tfrac{1}{2} \right ]_{k-i} = 3 \left [\tfrac{1}{2} \right ]_{k}
	.
	%%LEAP%%%\label{eq7}
\end{equation}
\end{lemma}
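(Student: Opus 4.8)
The plan is to encode all three identities as coefficient relations of a single generating function and to read them off from closed forms. The starting point is that the absolute falling factorials $\left[\tfrac{1}{2}\right]_{n}$ are, up to sign, the Taylor coefficients of $\sqrt{1-x}$. Since $\left(\tfrac{1}{2}\right)_{n}$ has sign $(-1)^{n-1}$ for $n\geq 1$, the binomial series yields
\[
\sqrt{1-x} = \sum_{n=0}^{\infty} \frac{\left(\tfrac{1}{2}\right)_{n}}{n!}(-x)^{n} = 1 - \sum_{n=1}^{\infty} \frac{\left[\tfrac{1}{2}\right]_{n}}{n!}\,x^{n},
\]
so that $g(x):=1-\sqrt{1-x}=\sum_{n\geq 1}\frac{\left[\tfrac{1}{2}\right]_{n}}{n!}x^{n}$ carries $\left[\tfrac{1}{2}\right]_{n}$ as its exponential-generating-function (EGF) coefficients. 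The whole argument then rests on the elementary fact that multiplying EGFs produces exactly the binomial convolutions $\sum_{i}\binom{n}{i}a_{i}b_{n-i}$ appearing in the lemma.

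For the second and third identities I would work with $G(x):=1+g(x)=2-\sqrt{1-x}$, whose EGF coefficients are $\left[\tfrac{1}{2}\right]_{n}$ including $n=0$. Squaring gives the closed form $G(x)^{2}=5-x-4\sqrt{1-x}$, and comparing EGF coefficients yields $\sum_{i=0}^{k}\binom{k}{i}\left[\tfrac{1}{2}\right]_{i}\left[\tfrac{1}{2}\right]_{k-i}=4\left[\tfrac{1}{2}\right]_{k}$ for all $k\geq 2$ (the $-x$ term only perturbs $k=1$). The two stated sums then arise from this full convolution by discarding boundary contributions: the third identity removes the single $i=0$ term (equal to $\left[\tfrac{1}{2}\right]_{k}$), giving $4\left[\tfrac{1}{2}\right]_{k}-\left[\tfrac{1}{2}\right]_{k}=3\left[\tfrac{1}{2}\right]_{k}$, and the second removes both the $i=0$ and $i=k$ terms (each equal to $\left[\tfrac{1}{2}\right]_{k}$), giving $2\left[\tfrac{1}{2}\right]_{k}$.

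The first identity is the genuinely new one, because of the shifted index $\left[\tfrac{1}{2}\right]_{n+1-i}$ against the unshifted $\binom{n}{i}$; here I would differentiate rather than square. Since $g'(x)=\tfrac{1}{2}(1-x)^{-1/2}$, its EGF coefficient at $x^{m}/m!$ is $\left[\tfrac{1}{2}\right]_{m+1}$, and the product telescopes:
\[
g(x)\,g'(x) = \bigl(1-\sqrt{1-x}\bigr)\tfrac{1}{2}(1-x)^{-1/2} = \tfrac{1}{2}(1-x)^{-1/2} - \tfrac{1}{2} = g'(x)-\tfrac{1}{2}.
\]
Reading the coefficient of $x^{n}/n!$ for $n\geq 1$ on the left via EGF multiplication produces $\sum_{i=1}^{n}\binom{n}{i}\left[\tfrac{1}{2}\right]_{i}\left[\tfrac{1}{2}\right]_{n+1-i}$ (the $i=0$ term drops out because $g$ has no constant term), while on the right it equals the coefficient $\left[\tfrac{1}{2}\right]_{n+1}$ of $g'$, which is precisely the claim.

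The only delicate points, and where I expect to spend the care, are the sign bookkeeping that converts the binomial coefficients $\left(\tfrac{1}{2}\right)_{n}$ into the absolute values $\left[\tfrac{1}{2}\right]_{n}$, and the treatment of the low-order terms: the constant term of $g$, the $-x$ correction in $G(x)^{2}$, and the $i=0,k$ boundary terms must all be tracked so that the identities hold \emph{exactly} in the stated ranges $n\geq 1$ and $k\geq 2$ rather than merely asymptotically; a direct check of the smallest cases ($n=1$ and $k=2$) would confirm the bookkeeping. A self-contained route avoiding generating functions is also available, using the recursion $\left[\tfrac{1}{2}\right]_{n+1}=(n-\tfrac{1}{2})\left[\tfrac{1}{2}\right]_{n}$ together with Pascal's rule to drive an induction, but it is considerably more cumbersome, so the generating-function argument is preferable.
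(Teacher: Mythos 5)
Your proof is correct and follows essentially the same route as the paper: both arguments exploit that $\left[\tfrac{1}{2}\right]_{n}$ are the Taylor coefficients of $1-\sqrt{1-x}$ (the paper uses $f(x)=\tfrac{1}{2}(1-\sqrt{1-x})$ with $f f'=\tfrac{1}{2}f'-\tfrac{1}{8}$ for the first identity and $f^{2}$ for the second, which is exactly your telescoping $g g'=g'-\tfrac{1}{2}$ and your squaring step up to normalization). The only organizational difference is that you derive the second and third identities together from the full convolution $\sum_{i=0}^{k}=4\left[\tfrac{1}{2}\right]_{k}$ by stripping boundary terms, making your argument self-contained where the paper instead cites its earlier Lemma 2.3 for the second identity and adds $\left[\tfrac{1}{2}\right]_{k}$ to obtain the third.
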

\begin{proof}
We choose the function $f(x)=\frac{1}{2}(1-\sqrt{1-x})$ and
%
%e8 #&#
\begin{align}
	g(x)=f(x)f'(x) =\left (\frac{1}{2}(1-\sqrt{1-x}\,)\right )\cdot
	\left (\frac{1}{4}\frac{1}{\sqrt{1-x}}\right ) =\frac{1}{8}\left (
	\frac{1}{\sqrt{1-x}}-1\right )=\frac{1}{2}f'(x)-\frac{1}{8}.
	\label{eq8}
\end{align}
From \cite[Section 2.2]{ChernovLe2023}, we know that
$f^{(n)}(0)=\frac{1}{2} \left [\tfrac{1}{2} \right ]_{n} $ for all
$n \in \mathbb N$. Thus, on the one hand, for all $n\geq 1$ we have
\begin{align*}
	g^{(n)}(0)= \frac{1}{2}f^{(n+1)}(0) = \frac{1}{4}\left [\tfrac{1}{2}
	\right ]_{n+1} .
\end{align*}
On the other hand, by Leibniz product rule and since $f(0)=0$, we have
\begin{align*}
	g^{(n)}(0)= \sum _{i=1}^{n} \binom{n}{i} f^{(i)}(0) f^{(n+1-i)}(0)
	= \frac{1}{4} \sum _{i=1}^{n} \binom{n}{i} \left [
	\tfrac{1}{2} \right ]_{i} \left [\tfrac{1}{2} \right ]_{n+1-i} .
\end{align*}
This shows the first identity in \reftext{\eqref{ff-main-id-2}}. The second identity
follows for $f(x)=\frac{1}{2}(1-\sqrt{1-x})$ and $g=f^{2}$, see e.g.
\cite[Lemma 2.3]{ChernovLe2023}. Increasing both sides of this identity
by $\left [\tfrac{1}{2} \right ]_{k} $, we observe that the third identity
in \reftext{\eqref{ff-main-id-2}} is also valid.
\end{proof}
%
%c2.2 #&#
\begin{corollary}
\label{cor2.2}
With the convention that the empty sum equals zero, the \reftext{Lemma~\ref{lem:ff-main-id-2}} extends to all non-negative integers
$n \in \mathbb N_{0}$ as
%
%e9 #&#
%e10 #&#
%e11 #&#
\begin{align}
	\label{ff-bound-1}
	\sum _{i=1}^{n} \binom{n}{i} \left [\tfrac{1}{2} \right ]_{i} \,
	\left [\tfrac{1}{2} \right ]_{n+1-i} &\leq \left [\tfrac{1}{2}
	\right ]_{n+1} ,\\
	%%LEAP%%%\label{eq9}
%	\eqncr
	\sum _{i=1}^{n-1}
	\binom{k}{i} \left [\tfrac{1}{2} \right ]_{i} \, \left [\tfrac{1}{2}
	\right ]_{n-i} &\leq
	2 \left [\tfrac{1}{2} \right ]_{n},
	\label{ff-bound-2}\\
	%%LEAP%%%\label{eq10}
%	\eqncr
	\sum _{i=1}^{n} \binom{k}{i} \left [\tfrac{1}{2}
	\right ]_{i} \, \left [\tfrac{1}{2} \right ]_{n-i} &\leq
	3 \left [\tfrac{1}{2} \right ]_{n}.
	\label{ff-bound-3}
	%%LEAP%%%\label{eq11}
\end{align}
\end{corollary}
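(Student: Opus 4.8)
The plan is to deduce each of the three bounds directly from the corresponding identity in Lemma~\ref{lem:ff-main-id-2}, so that the only genuinely new content of the corollary is a finite collection of small-index boundary cases. The essential observation is that every summand is nonnegative: binomial coefficients are nonnegative, and by definition $\gdnota{i} = \left|(\tfrac12)_i\right| \geq 0$ for all $i \in \mathbb N_0$. Consequently each left-hand sum is a nonnegative real number, and each right-hand side $\gdnota{n+1}$, $2\gdnota{n}$, $3\gdnota{n}$ is likewise nonnegative. Thus, on the range of $n$ where Lemma~\ref{lem:ff-main-id-2} already applies, the three identities in \eqref{ff-main-id-2} yield the claimed inequalities as equalities, and only the remaining small values of $n$ must be inspected by hand.

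For \eqref{ff-bound-1}, the first identity in \eqref{ff-main-id-2} gives equality for every $n \geq 1$, so the only new case is $n = 0$, where the summation index runs from $1$ to $0$ and the empty-sum convention yields $0 \leq \gdnota{1} = \tfrac12$. For \eqref{ff-bound-2} (where $\binom{k}{i}$ is to be read as $\binom{n}{i}$), the second identity covers all $n \geq 2$ as an equality, while for $n = 0$ and $n = 1$ the sum from $i=1$ to $n-1$ is again empty, so the left side is $0$ and is bounded by the nonnegative quantity $2\gdnota{n}$. Finally, for \eqref{ff-bound-3} the third identity covers $n \geq 2$; the case $n = 0$ gives an empty sum, and the single remaining case $n = 1$ reduces to the one term $\binom{1}{1}\gdnota{1}\gdnota{0} = \tfrac12 \leq \tfrac32 = 3\gdnota{1}$, which is precisely where the inequality is genuinely strict and equality fails.

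I expect no serious obstacle here: the whole argument is a reduction to Lemma~\ref{lem:ff-main-id-2} plus a direct check of the two or three boundary indices. The only points demanding care are applying the empty-sum convention consistently (so that, for instance, $n=0$ contributes nothing on the left) and confirming that the nonnegativity of $\gdnota{\cdot}$ makes every right-hand side a valid upper bound in these degenerate cases. This is exactly the reason the corollary is phrased with inequalities rather than equalities.
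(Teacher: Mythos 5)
Your proof is correct and takes exactly the route the paper intends: the corollary is stated there without any proof, as an immediate consequence of Lemma~\ref{lem:ff-main-id-2}, and your reduction to the lemma's identities via nonnegativity of the summands, together with the explicit check of the empty-sum cases and the single nontrivial case $n=1$ in \eqref{ff-bound-3}, supplies precisely the omitted details. Your reading of $\binom{k}{i}$ as $\binom{n}{i}$ is also the right one, consistent with how the bounds are later invoked in the proof of Lemma~\ref{lem2.3}.
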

%

%s2.2 #&#
\subsection{Multi-index notation}
%%LEAP%%%\label{sec2.2}
\label{sec:_Multiindex_VTEX1}

The following standard multi-index notations will be used
in what follows, see e.g.
\cite{BoutetDeMonvel1967,CohenDevoreSchwab2010}. We denote the countable
set of finitely supported sequences of nonnegative integers by
%
%e12 #&#
\begin{equation}
\label{cF-def}
\mathcal F:= \left \{ \boldsymbol{\nu }=(\nu _{1},\nu _{2},\dots )~:~
\nu _{j}\in \mathbb N_{0}, \text{ and } \nu _{j} \neq 0
\text{ for only a finite number of } j \right \} \subset \mathbb N^{
	\mathbb N},
%%LEAP%%%\label{eq12}
\end{equation}
where the summation $\boldsymbol{\alpha }+ \boldsymbol{\beta }$ and the
partial order relations $\boldsymbol{\alpha }< \boldsymbol{\beta }$ and
$\boldsymbol{\alpha }\leq \boldsymbol{\beta }$ of elements in
$\boldsymbol{\alpha }, \boldsymbol{\beta }\in \mathcal F$ are understood
componentwise. We write
\begin{align*}
\left |\boldsymbol{\nu }\right | := \sum _{j\geq 1} \nu _{j}, \qquad
\qquad \boldsymbol{\nu }! := \prod _{j\geq 1} \nu_{j}!, \qquad \qquad
\boldsymbol{R}^{\boldsymbol{\nu }}= \prod _{j\geq 1} R_{j}^{\nu_{j}}
\end{align*}
for the absolute value, the multifactorial and the power with the
multi-index $\boldsymbol{\nu }$ and a sequence
$\boldsymbol{R}=\{R_{j}\}_{j\geq 1}$ of positive real numbers. Notice that
$\left |\boldsymbol{\nu }\right |$ is finite if and only if
$\boldsymbol{\nu }\in \mathcal F$. For
$\boldsymbol{\nu }\in \mathcal F$ supported in
$\left \{1,2,\dots , n\right \}$, we define the partial derivative with
respect to the variables $\boldsymbol{y}$
\begin{align*}
\partial ^{\boldsymbol{\nu }} u =
\frac{\partial ^{\left |\boldsymbol{\nu }\right |}u}
{\partial y_{1}^{\nu _{1}} \partial y_{2}^{\nu _{2}} \dots \partial y_{n}^{\nu _{n}}}.
\end{align*}
For two multi-indicies
$\boldsymbol{\nu }, \boldsymbol{\eta }\in \mathcal F$ we define the binomial
coefficient by
\begin{equation*}
\binom{\boldsymbol{\nu }}{\boldsymbol{\eta }} = \prod _{j
	\geq 1} \binom{\nu _{j}}{\eta _{j}}.
	\end{equation*}
	
	The above multi-index notations are handy for treatment
	of multiparametric objects. The following technical Lemma is instrumental
	for the forthcoming analysis.
	%
	%l2.3 #&#
	\begin{lemma}
\label{lem2.3}
For two multi-indicies
$\boldsymbol{\nu }, \boldsymbol{\eta }\in \mathcal F$ satisfying
$\boldsymbol{\eta }\leq \boldsymbol{\nu }$, a unit
multi-index $\boldsymbol{e}$ and $\delta \geq 1$ we have
%
%e13 #&#
\begin{equation}
	\label{multiindex-est-1}
	(|\boldsymbol{\nu }-\boldsymbol{\eta }|!)^{\delta -1} (|
	\boldsymbol{\eta }|!)^{\delta -1} \leq (|\boldsymbol{\nu }|!)^{
		\delta -1},
	%%LEAP%%%\label{eq13}
\end{equation}
%
%e14 #&#
\begin{equation}
	\sum _{\boldsymbol{0}<\boldsymbol{\eta }< \boldsymbol{\nu }}
	\binom{\boldsymbol{\nu }}{\boldsymbol{\eta }} \left [
	\tfrac{1}{2} \right ]_{|\boldsymbol{\eta }|} \left [\tfrac{1}{2}
	\right ]_{|\boldsymbol{\nu }-\boldsymbol{\eta }|} \leq
	2 \left [\tfrac{1}{2} \right ]_{|\boldsymbol{\nu }|},
	\label{multiindex-est-7}
	%%LEAP%%%\label{eq14}
\end{equation}
%
%e15 #&#
\begin{equation}
	\sum _{\boldsymbol{0}<\boldsymbol{\eta }\leq \boldsymbol{\nu }}
	\binom{\boldsymbol{\nu }}{\boldsymbol{\eta }} \left [
	\tfrac{1}{2} \right ]_{|\boldsymbol{\eta }|} \left [\tfrac{1}{2}
	\right ]_{|\boldsymbol{\nu }-\boldsymbol{\eta }|} \leq 3 \left[
	\tfrac{1}{2} \right ]_{|\boldsymbol{\nu }|},
	\label{multiindex-est-3}
	%%LEAP%%%\label{eq15}
\end{equation}
%
%e16 #&#
\begin{equation}
	\sum _{
		\boldsymbol{0}< \boldsymbol{\eta }\leq
		\boldsymbol{\nu }} \binom{\boldsymbol{\nu }}{\boldsymbol{\eta }}
	\left [\tfrac{1}{2} \right ]_{\left |\boldsymbol{\nu }+
		\boldsymbol{e}-\boldsymbol{\eta }\right |} \left [\tfrac{1}{2}
	\right ]_{\left |\boldsymbol{\eta }\right |} \leq
	\left [\tfrac{1}{2} \right ]_{\left |
		\boldsymbol{\nu }+\boldsymbol{e}\right |} ,
	\label{multiindex-est-6}
	%%LEAP%%%\label{eq16}
\end{equation}
%
%e17 #&#
\begin{equation}
	\sum _{\boldsymbol{0}< \boldsymbol{\eta }\leq \boldsymbol{\nu }}
	\sum _{
		\boldsymbol{0}< \boldsymbol{\ell }\leq
		\boldsymbol{\eta }} \binom{\boldsymbol{\nu }}{\boldsymbol{\eta }}
	\binom{\boldsymbol{\eta }}{\boldsymbol{\ell }}
	\left [\tfrac{1}{2} \right ]_{\left |\boldsymbol{\eta }-
		\boldsymbol{\ell }\right |} \left [\tfrac{1}{2} \right ]_{\left |
		\boldsymbol{\ell }\right |} \left [\tfrac{1}{2} \right ]_{\left |
		\boldsymbol{\nu }+\boldsymbol{e}-\boldsymbol{\eta }\right |} \leq
	3\left [\tfrac{1}{2} \right ]_{\left |
		\boldsymbol{\nu }+\boldsymbol{e}\right |} .
	\label{multiindex-est-5}
	%%LEAP%%%\label{eq17}
\end{equation}
\end{lemma}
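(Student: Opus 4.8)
The plan is to separate the purely factorial bound \eqref{multiindex-est-1} from the four combinatorial sums \eqref{multiindex-est-7}--\eqref{multiindex-est-5}, and to reduce each of the latter to its scalar counterpart already established in Lemma~\ref{lem:ff-main-id-2} and Corollary~\ref{cor2.2}. For \eqref{multiindex-est-1} I would first note that $\boldsymbol{\eta} \leq \boldsymbol{\nu}$ gives $|\boldsymbol{\eta}| + |\boldsymbol{\nu}-\boldsymbol{\eta}| = |\boldsymbol{\nu}|$; writing $p = |\boldsymbol{\eta}|$ and $q = |\boldsymbol{\nu}-\boldsymbol{\eta}|$, the scalar inequality $p!\,q! \leq (p+q)!$ holds because $\binom{p+q}{p}$ is a positive integer, and raising to the nonnegative power $\delta-1$ preserves it. This disposes of the first claim.

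The engine for the remaining four estimates is the multi-index Vandermonde identity
\begin{equation*}
\sum_{\substack{\boldsymbol{\eta} \leq \boldsymbol{\nu} \\ |\boldsymbol{\eta}| = i}} \binom{\boldsymbol{\nu}}{\boldsymbol{\eta}} = \binom{|\boldsymbol{\nu}|}{i},
\end{equation*}
which I would prove by induction on the size of the support of $\boldsymbol{\nu}$, using the classical Vandermonde convolution $\sum_{j} \binom{m}{j}\binom{n}{i-j} = \binom{m+n}{i}$ on one component at a time. With it, any sum of the form $\sum_{\boldsymbol{\eta}} \binom{\boldsymbol{\nu}}{\boldsymbol{\eta}} F(|\boldsymbol{\eta}|)$ collapses into a single scalar sum $\sum_{i} \binom{|\boldsymbol{\nu}|}{i} F(i)$, since on $\boldsymbol{\eta} \leq \boldsymbol{\nu}$ every quantity inside the brackets depends only on $|\boldsymbol{\eta}|$ (because $|\boldsymbol{\nu}-\boldsymbol{\eta}| = |\boldsymbol{\nu}| - |\boldsymbol{\eta}|$ and $|\boldsymbol{\nu}+\boldsymbol{e}-\boldsymbol{\eta}| = |\boldsymbol{\nu}|+1-|\boldsymbol{\eta}|$).

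With this reduction, \eqref{multiindex-est-7}--\eqref{multiindex-est-6} follow directly. For \eqref{multiindex-est-7} the constraints $\boldsymbol{0} < \boldsymbol{\eta} < \boldsymbol{\nu}$ translate exactly into $1 \leq |\boldsymbol{\eta}| \leq |\boldsymbol{\nu}|-1$ (the only $\boldsymbol{\eta} \leq \boldsymbol{\nu}$ with $|\boldsymbol{\eta}| \in \{0, |\boldsymbol{\nu}|\}$ being $\boldsymbol{0}$ and $\boldsymbol{\nu}$), so the sum becomes $\sum_{i=1}^{|\boldsymbol{\nu}|-1} \binom{|\boldsymbol{\nu}|}{i} [\tfrac{1}{2}]_i [\tfrac{1}{2}]_{|\boldsymbol{\nu}|-i}$, which is the second identity of Lemma~\ref{lem:ff-main-id-2} with $k = |\boldsymbol{\nu}|$. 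Relaxing the upper constraint to $\boldsymbol{\eta} \leq \boldsymbol{\nu}$ extends the range to $1 \leq i \leq |\boldsymbol{\nu}|$ and gives \eqref{multiindex-est-3} from the third identity; and \eqref{multiindex-est-6} produces $\sum_{i=1}^{|\boldsymbol{\nu}|} \binom{|\boldsymbol{\nu}|}{i}[\tfrac{1}{2}]_i[\tfrac{1}{2}]_{|\boldsymbol{\nu}|+1-i}$, which is the first identity of Lemma~\ref{lem:ff-main-id-2} with $n = |\boldsymbol{\nu}|$. In each case one checks the degenerate small-$|\boldsymbol{\nu}|$ cases, where a sum may be empty, separately against the stated bound.

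Finally, \eqref{multiindex-est-5} is handled by a nested application of the bounds just proved rather than by a fresh Vandermonde computation. Holding $\boldsymbol{\eta}$ fixed, the inner sum over $\boldsymbol{0} < \boldsymbol{\ell} \leq \boldsymbol{\eta}$ is precisely of the shape bounded in \eqref{multiindex-est-3}, giving $\sum_{\boldsymbol{0}<\boldsymbol{\ell}\leq\boldsymbol{\eta}} \binom{\boldsymbol{\eta}}{\boldsymbol{\ell}} [\tfrac{1}{2}]_{|\boldsymbol{\eta}-\boldsymbol{\ell}|}[\tfrac{1}{2}]_{|\boldsymbol{\ell}|} \leq 3[\tfrac{1}{2}]_{|\boldsymbol{\eta}|}$; the remaining factor $[\tfrac{1}{2}]_{|\boldsymbol{\nu}+\boldsymbol{e}-\boldsymbol{\eta}|}$ is independent of $\boldsymbol{\ell}$ and pulls out. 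The outer sum then reads $3\sum_{\boldsymbol{0}<\boldsymbol{\eta}\leq\boldsymbol{\nu}} \binom{\boldsymbol{\nu}}{\boldsymbol{\eta}} [\tfrac{1}{2}]_{|\boldsymbol{\eta}|}[\tfrac{1}{2}]_{|\boldsymbol{\nu}+\boldsymbol{e}-\boldsymbol{\eta}|}$, which is $3$ times the left-hand side of \eqref{multiindex-est-6} and hence at most $3[\tfrac{1}{2}]_{|\boldsymbol{\nu}+\boldsymbol{e}|}$. I expect the main obstacle to be the clean proof of the multi-index Vandermonde identity together with the careful bookkeeping of which multi-indices survive the strict versus non-strict constraints $\boldsymbol{0}<\boldsymbol{\eta}<\boldsymbol{\nu}$ and $\boldsymbol{0}<\boldsymbol{\eta}\leq\boldsymbol{\nu}$, since the difference between the constants $2$ and $3$ in \eqref{multiindex-est-7} and \eqref{multiindex-est-3} hinges entirely on whether the endpoint $\boldsymbol{\eta}=\boldsymbol{\nu}$ is included.
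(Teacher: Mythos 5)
Your proposal is correct and follows essentially the same route as the paper: the factorial inequality $p!\,q!\leq(p+q)!$ for \eqref{multiindex-est-1}, the multi-index Chu--Vandermonde identity (which the paper cites from \cite{ChernovLe2023} rather than reproving) to collapse \eqref{multiindex-est-7}--\eqref{multiindex-est-6} to the scalar bounds of Corollary~\ref{cor2.2}, and the consecutive application of \eqref{multiindex-est-3} and \eqref{multiindex-est-6} for \eqref{multiindex-est-5}. No gaps.
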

\begin{proof}
Notice that for two non-negative integers $n!\cdot m! \leq (n+m)!$ and
therefore
\begin{align*}
	|\boldsymbol{\nu }-\boldsymbol{\eta }|! |\boldsymbol{\eta }|! \leq (|
	\boldsymbol{\nu }-\boldsymbol{\eta }|+ |\boldsymbol{\eta }|)! = |
	\boldsymbol{\nu }|!.
\end{align*}
Since $(\cdot )^{\delta -1}$ is an increasing function for
$\delta \geq 1$, the estimate \reftext{\eqref{multiindex-est-1}} follows. According
to \cite[Lemma 7.1]{ChernovLe2023}, we have
%
%e18 #&#
\begin{align}
	\label{iden:_Chu-Vandermonde_VTEX1}
	\sum _{{\left |\boldsymbol{\eta }\right |=r}\atop{\boldsymbol{\eta } \leq
			\boldsymbol{\nu }}} \binom{\boldsymbol{\nu }}{\eta}=
	\binom{|\boldsymbol{\nu }|}{r},
	%%LEAP%%%\label{eq18}
\end{align}
which is sometimes called generalized Vandermonde or Chu-Vandermonde identity.
This together with \reftext{\eqref{ff-bound-3}} imply the estimate
\begin{align*}
	\sum _{\boldsymbol{0}<\boldsymbol{\eta }< \boldsymbol{\nu }}
	\binom{\boldsymbol{\nu }}{\boldsymbol{\eta }} \left [
	\tfrac{1}{2} \right ]_{|\boldsymbol{\nu }-\boldsymbol{\eta }|} \left [
	\tfrac{1}{2} \right ]_{|\boldsymbol{\eta }|} = \sum _{
		r=1}^{
		\left |\boldsymbol{\nu }\right |-1} \sum _{{\left |
			\boldsymbol{\eta }\right |=r}\atop{\boldsymbol{\eta }\leq
			\boldsymbol{\nu }}} \binom{\boldsymbol{\nu }}{\boldsymbol{\eta }}
	\left [\tfrac{1}{2} \right ]_{|\boldsymbol{\nu }|-r} \left [
	\tfrac{1}{2} \right ]_{r} = \sum _{r=1}^{
		\left |\boldsymbol{\nu }\right |-1} \binom{|
		\boldsymbol{\nu }|}{r} \left [\tfrac{1}{2} \right ]_{|
		\boldsymbol{\nu }|-r} \left [\tfrac{1}{2} \right ]_{r} \leq
	2\left [\tfrac{1}{2} \right ]_{|\boldsymbol{\nu }|}
	.
\end{align*}
This shows inequality \reftext{\eqref{multiindex-est-3}}. Similarly, we derive bounds \reftext{\eqref{multiindex-est-3}} and \reftext{\eqref{multiindex-est-6}} by applying \reftext{\eqref{iden:_Chu-Vandermonde_VTEX1}} to \reftext{\eqref{ff-bound-3}} and \reftext{\eqref{ff-bound-1}}, respectively. The final estimate \reftext{\eqref{multiindex-est-5}} follows by the consecutive application of \reftext{\eqref{multiindex-est-3}} and \reftext{\eqref{multiindex-est-6}}.
\end{proof}

%s2.3 #&#
\subsection{Gevrey-class and analytic functions}
%%LEAP%%%\label{sec2.3}
\label{sec:_Gevrey_VTEX1}

The following definition of Gevrey-$\delta $ functions with countably many
parameters will be used in our regularity analysis in Section~\ref{sec:_main_result_VTEX1}.

%d2.4 #&#
\begin{definition}
%%LEAP%%%\label{defn2.4}
\label{def:G-delta-def}
Let $\delta \geq 1$, $B$ be a Banach space,
$I \subset \mathbb R^{\mathbb N}$ be an open domain and a function
$f:I \to B$ be such that its $\boldsymbol{y}$-derivatives
$\partial ^{\boldsymbol{\nu }} f:I \to B$ are continuous for all
$\boldsymbol{\nu }\in \mathcal F$. We say the function
$f$ is of class Gevrey-$\delta $ if for each $y_{0} \in I$ there exist
an open set $J\subseteq I$, and strictly positive constants
$\boldsymbol{R}= (R_{1},R_{2},\dots) \in \mathbb R_{>0}^{\mathbb N}$ and $C \in \mathbb R_{>0}$ that the derivatives of $f$ satisfy
the bounds
%
%e19 #&#
\begin{equation}
	\label{G-delta-def}
	\|\partial ^{\boldsymbol{\nu }} f(\boldsymbol{y})\|_{B} \leq
	\frac{C}{\boldsymbol{R}^{\boldsymbol{\nu }}} (|\boldsymbol{\nu }|!)^{
		\delta}, \qquad \forall \boldsymbol{y}\in J, \quad \forall
	\boldsymbol{\nu }\in \mathcal F.
	%%LEAP%%%\label{eq19}
\end{equation}
In this case we write $f \in G^{\delta}(U,B)$.
\end{definition}

\reftext{Definition~\ref{def:G-delta-def}} is also suitable for the case of finitely
many parameters $\boldsymbol{y}$. In particular, when
$\boldsymbol{y}= (y_{1},\dots ,y_{s})$, $B = \mathbb R$ or
$\mathbb C$ and $\delta =1$, the bound \reftext{\eqref{G-delta-def}} guarantees convergence
of the power series of $f$ and therefore characterizes the class of analytic
functions of $s$ variables, see e.g. \cite[Section~2.2]{Krantz2002} and
\cite[Remark~2.6]{ChernovLe2023}. This property follows from the bound
$|\boldsymbol{\nu }|! \leq s^{|\boldsymbol{\nu }|} \boldsymbol{\nu }!$
that is valid for a multi-index $\boldsymbol{\nu }$ with
$s$ nonzero components. Notice that otherwise estimate \reftext{\eqref{G-delta-def}} does not guarantee convergence of the power series
of $f$. Moreover, the scale $G^{\delta}$ grows
monotonically with $\delta $ in the sense of \reftext{\eqref{Gdelta-nested}}.

We now make an assumption on the coefficients, which, in particular, ensure
that the solution of the semilinear problem \reftext{\eqref{gen_equation_VTEX1}} is Gevrey-class
regular.
%
%a2.5 #&#
\begin{assumption}
\label{Assumption}
For all fixed values
$\boldsymbol{y}\in U \subset  \mathbb R^{s}$ with
$s \in \mathbb N$, the coefficients
$a(\boldsymbol{y}),b(\boldsymbol{y}) \in L^{\infty}(D)$ and
$f(y)\in V^{*}$. The functions $a$, $b$ are of Gevrey class
$G^{\delta}(U,L^{\infty}(D))$ and $f$ is of Gevrey class
$G^{\delta}(U,V^{*})$, i.e. for all
$\boldsymbol{\nu }\in \mathbb N^{s}$ there exist $\boldsymbol{R}$ independent
of $s$ such that
\begin{align*}
	\left \|{\partial ^{\boldsymbol{\nu }} a(\boldsymbol{y})}\right \|_{L^{
			\infty }(D)} \leq \frac{\overline{a}}{2}
	\frac{(\left |\boldsymbol{\nu }\right |!)^{\delta }}{(2\boldsymbol{R})^{\boldsymbol{\nu }}}
	, \qquad \left \|{\partial ^{\boldsymbol{\nu }} b(\boldsymbol{y})}
	\right \|_{L^{\infty }(D)} \leq \frac{\overline{b}}{2}
	\frac{(\left |\boldsymbol{\nu }\right |!)^{\delta }}{(2\boldsymbol{R})^{\boldsymbol{\nu }}}
	, \qquad \left \|{\partial ^{\boldsymbol{\nu }} f(\boldsymbol{y})}
	\right \|_{V^{*}} \leq \frac{\overline{f}}{2}
	\frac{(\left |\boldsymbol{\nu }\right |!)^{\delta }}{(2\boldsymbol{R})^{\boldsymbol{\nu }}}
	.
\end{align*}
\end{assumption}
Notice that for $\boldsymbol{\nu }= \boldsymbol{0}$ \reftext{Assumption~\ref{Assumption}} agrees with the upper bounds in \reftext{\eqref{ab-bounds}}. Notice
also that the components of $\boldsymbol{R}$ are readily scaled by the
factor of $2$. This leads to no loss of generality, but helps to shorten
the forthcoming expressions. For example, in view of \reftext{\eqref{ff-estimates}} \reftext{Assumption~\ref{Assumption}} immediately implies
%
%e20 #&#
\begin{align}
\label{abf_assumption_VTEX1}
\left \|{\partial ^{\boldsymbol{\nu }} a(\boldsymbol{y})}\right \|_{L^{
		\infty }(D)} \leq
\frac{\overline{a} \left [\tfrac{1}{2} \right ]_{\left |\boldsymbol{\nu }\right |} }{\boldsymbol{R}^{\boldsymbol{\nu }}}
(\left |\boldsymbol{\nu }\right |!)^{\delta -1} , \qquad \left \|{
	\partial ^{\boldsymbol{\nu }} b(\boldsymbol{y})}\right \|_{L^{\infty }(D)}
\leq
\frac{\overline{b} \left [\tfrac{1}{2} \right ]_{\left |\boldsymbol{\nu }\right |} }{\boldsymbol{R}^{\boldsymbol{\nu }}}
(\left |\boldsymbol{\nu }\right |!)^{\delta -1} , \qquad \left \|{
	\partial ^{\boldsymbol{\nu }} f(\boldsymbol{y})}\right \|_{V^{*}}
\leq
\frac{\overline{f} \left [\tfrac{1}{2} \right ]_{\left |\boldsymbol{\nu }\right |} }{\boldsymbol{R}^{\boldsymbol{\nu }}}(
\left |\boldsymbol{\nu }\right |!)^{\delta -1} .
%%LEAP%%%\label{eq20}
\end{align}

The definition of the norms used above is standard
and will be recalled in the beginning of the next section.

%s3 #&#
\section{Elliptic semilinear PDEs with countably many parameters}
%%LEAP%%%\label{sec3}
\label{sec:EVPtheory}

For a fixed $\boldsymbol{y}\in U$ the variational formulation of \reftext{\eqref{gen_equation_VTEX1}} reads
%
%e21 #&#
\begin{equation}
\label{variational_form_VTEX1}
C_{m}^{2}\int _{D} a(\boldsymbol{x},\boldsymbol{y})\nabla u(
\boldsymbol{x},\boldsymbol{y})\cdot \nabla v(\boldsymbol{x}) + \int _{D}
b(\boldsymbol{x},\boldsymbol{y})[ u(\boldsymbol{x},\boldsymbol{y}) ]^{m}
\, v(\boldsymbol{x}) = C_{m}\int _{D} f(\boldsymbol{x},\boldsymbol{y})
\, v(\boldsymbol{x}).
%%LEAP%%%\label{eq21}
\end{equation}
The H\"older inequality implies that the second integral is well-defined
for $u(\cdot , \boldsymbol{y}), v(\cdot )\in L^{m+1}(D)$. By the Sobolev
embedding theorem this is guaranteed for $H^{1}_{0}(D)$ functions under
restrictions on the range of $m$ as readily announced in \reftext{\eqref{cond:_m_d_restriction_VTEX1}}. We now collect the required notations and
facts from the theory of variational semilinear problems. By
$L^{p}(D)$ and $L^{\infty}(D)$ we denote the spaces of $p$-power integrable
and bounded functions equipped with standard norms.

Throughout the paper, when it is unambiguous we will drop the
$\boldsymbol{x}$-dependence when referring to a function defined on
$D$ at a parameter value~$\boldsymbol{y}$. We introduce the Sobolev spaces
$V:=H^{1}_{0}(D)$, its dual $V^{*}:=H^{-1}(D)$ equipped the following norms
\begin{align*}
\left \|{u}\right \|_{V}:= C_{m}\left \|{u}\right \|_{H^{1}_{0}(D)},
\qquad \left \|{f}\right \|_{V^{*}}:=\left \|{f}\right \|_{H^{-1}(D)} =
\sup _{{v\in V}\atop{v\neq 0}}
\frac{\int _{D} f\, v}{\left \|{v}\right \|_{H^{1}_{0}(D)}} = \sup _{{v
		\in V}\atop{v\neq 0}}
\frac{\left \langle{f},{v} \right \rangle }{\left \|{v}\right \|_{V}},
\end{align*}
where the duality pairing on $V\times V^{*}$ is denoted by
$\left \langle{\cdot },{\cdot } \right \rangle $ as
%
%e22 #&#
\begin{align}
\left \langle{g},{v} \right \rangle := C_{m} \int _{D} g\,v, \qquad
\forall g\in V^{*} \text{ and } \forall v\in V.
\label{eq22}
\end{align}
Moreover, we have
%
%e23 #&#
\begin{align}
\left \langle{f},{v} \right \rangle \leq C_{m}\left \|{f}\right \|_{H^{-1}(D)}
\left \|{v}\right \|_{H^{1}_{0}(D)} = \left \|{f}\right \|_{V^{*}}
\left \|{v}\right \|_{V}.
\label{Sobolev_dual_ineq_VTEX1}
%%LEAP%%%\label{eq23}
\end{align}

%s3.1 #&#
\subsection{$L^{p}$ spaces and the H\"older inequality}
\label{sec3.1}

The H\"older inequality will be frequently used in various ways in the
following. As we will see, the H\"older exponents will
often be rational. In order to keep the technical presentation
clear, we propose the following simplifying notation for the classical Lebesgue
spaces and their norms
%
%e24 #&#
\begin{equation}
\label{LL-def}
\mathcal L(p/k) := L^{\frac{p}{k}}(D), \qquad \|f\|_{\mathcal L(p/k)} :=
\left |\int _{D} |f|^{\frac{p}{k}} \, dx \right |^{\frac{k}{p}},
\qquad 0 < k \leq p < \infty .
%%LEAP%%%\label{eq24}
\end{equation}
Although this notation makes sense for general positive real $p$ and
$k$, in this paper they will only take integer values. Since
$k \leq p$, the spaces $\mathcal L(p/k)$ are Banach spaces and the triangle
inequality (also known as Minkowski inequality in this
special case) is valid
\begin{equation*}
\|f + g\|_{\mathcal L(p/k)} \leq \|f\|_{\mathcal L(p/k)}+ \|g\|_{
	\mathcal L(p/k)}, \qquad 0 < k \leq p < \infty .
	\end{equation*}
	The slash symbol in the notation $\mathcal L(p/k)$ (we intentionally don't
	use the comma to avoid possible confusion with Lorentz spaces) is used
	as a splitter between two parameters $p$ and $k$ and simultaneously indicates
	the fraction $\frac{p}{k}$. The following Lemma states the generalized
	H\"older inequalities in the notation of the Lebesgue spaces
	$\mathcal L(p/k)$, where the convenience of the two-parameter notation
	becomes apparent. This calculus plays an important role in the proof of
	the main regularity results in Section~\ref{sec:_main_result_VTEX1}.
	
	%l3.1 #&#
	\begin{lemma}%
%%LEAP%%%\label{lem3.1}
\label{lem:_Hoelder-type_ineq_VTEX1}
Let $k,n,p$ be positive integers satisfying $p \geq k+n $. Then for any
$f \in \mathcal L(p/k)$, $f \in \mathcal L(p/n)$ it holds that
%
%e25 #&#
\begin{align}
	\label{adjusted_Hoelder_VTEX1}
	\|fg\|_{\mathcal L(p/k+n)} \leq \|f\|_{\mathcal L(p/k)} \|g\|_{
		\mathcal L(p/n)}.
	%%LEAP%%%\label{eq25}
\end{align}
Moreover, for all positive integers $\ell \leq p$ and
$f_{1}, \dots , f_{\ell}\in \mathcal L(p/1)$ their product belongs to
$\mathcal L(p/\ell )$ and there holds
%
%e26 #&#
\begin{equation}
	\label{multiple_Hoelder_VTEX1}
	\big\|\prod _{i=1}^{\ell }f_{i}\big\|_{\mathcal L(p/\ell )} \leq
	\prod _{i=1}^{\ell }\big\|f_{i}\big\|_{\mathcal L(p/1)}.
	%%LEAP%%%\label{eq26}
\end{equation}
\end{lemma}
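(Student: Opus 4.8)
The plan is to prove the two generalized Hölder inequalities in the $\mathcal{L}(p/k)$ notation by reducing them to the classical Hölder inequality with conjugate exponents. For the first inequality~\eqref{adjusted_Hoelder_VTEX1}, I would observe that the target norm involves the exponent $\frac{p}{k+n}$, and the two factor norms involve $\frac{p}{k}$ and $\frac{p}{n}$. The natural conjugate-exponent pair to use is $\alpha = \frac{k+n}{k}$ and $\beta = \frac{k+n}{n}$, which indeed satisfy $\frac{1}{\alpha} + \frac{1}{\beta} = \frac{k}{k+n} + \frac{n}{k+n} = 1$. The condition $p \geq k+n$ guarantees that all three exponents $\frac{p}{k}, \frac{p}{n}, \frac{p}{k+n}$ are at least $1$, so that all the spaces involved are genuine Banach spaces and the classical Hölder inequality applies.

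\medskip

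Concretely, for the first claim I would write $\|fg\|_{\mathcal{L}(p/(k+n))}^{\,p/(k+n)} = \int_D |f|^{\frac{p}{k+n}} |g|^{\frac{p}{k+n}}\,dx$ and apply the classical Hölder inequality to the two factors $|f|^{\frac{p}{k+n}}$ and $|g|^{\frac{p}{k+n}}$ with the conjugate pair $(\alpha,\beta)$ above. The first factor raised to the power $\alpha$ gives $|f|^{\frac{p}{k+n}\cdot\frac{k+n}{k}} = |f|^{\frac{p}{k}}$, and similarly the second yields $|g|^{\frac{p}{n}}$, so that the integral is bounded by $\|f\|_{\mathcal{L}(p/k)}^{\frac{p}{k+n}}\,\|g\|_{\mathcal{L}(p/n)}^{\frac{p}{k+n}}$. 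Raising both sides to the power $\frac{k+n}{p}$ then yields exactly~\eqref{adjusted_Hoelder_VTEX1}. The only care needed is to verify the exponent bookkeeping and to check that the hypothesis $p \geq k+n$ makes every Lebesgue exponent admissible.

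\medskip

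For the second inequality~\eqref{multiple_Hoelder_VTEX1} I would proceed by induction on $\ell$. The base case $\ell = 1$ is trivial. For the inductive step, assuming the bound for $\ell-1$ factors, I would apply~\eqref{adjusted_Hoelder_VTEX1} with $f = \prod_{i=1}^{\ell-1} f_i$ (taking $k = \ell-1$) and $g = f_\ell$ (taking $n = 1$), which is legitimate since $p \geq \ell = (\ell-1)+1$. This gives $\big\|\prod_{i=1}^{\ell} f_i\big\|_{\mathcal{L}(p/\ell)} \leq \big\|\prod_{i=1}^{\ell-1} f_i\big\|_{\mathcal{L}(p/(\ell-1))}\,\|f_\ell\|_{\mathcal{L}(p/1)}$, and the inductive hypothesis on the first factor closes the argument.

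\medskip

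I do not anticipate a genuine obstacle here, as the result is a routine repackaging of classical Hölder into the two-parameter notation; the main point requiring attention is purely notational, namely carefully tracking how the exponents $\frac{p}{k}$, $\frac{p}{n}$, $\frac{p}{k+n}$ interact under the conjugate-exponent pairing so that the factored integrals reproduce exactly the norms claimed. One should also confirm at the outset that $p \geq k+n$ is precisely what is needed to keep all exponents $\geq 1$ and thus stay within the Banach-space regime where Hölder and Minkowski are valid.
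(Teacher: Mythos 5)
Your proposal is correct and follows essentially the same route as the paper: the classical H\"older inequality with the conjugate pair $\tfrac{k+n}{k}$, $\tfrac{k+n}{n}$ applied to $\int_D |fg|^{p/(k+n)}$, followed by raising to the power $\tfrac{k+n}{p}$, and then induction on the number of factors for the second inequality. No gaps.
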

\begin{proof}
The classical H\"older inequality implies for H\"older
conjugates $q = \tfrac{k+n}{k}$ and $q' = \tfrac{k+n}{n}$
\begin{align*}
	\int _{D} |fg|^{\frac{p}{k+n}} &\leq \left ( \int _{D} |f|^{
		\frac{pq}{k+n}} \right )^{\frac{1}{q}} \left ( \int _{D} |g|^{
		\frac{pq'}{k+n}} \right )^{\frac{1}{q'}} = \left ( \int _{D} |f|^{
		\frac{p}{k}} \right )^{\frac{k}{k+n}} \left ( \int _{D} |g|^{
		\frac{p}{n}} \right )^{\frac{n}{k+n}}.
\end{align*}
Raising both sides to the power $\tfrac{k+n}{p}$ directly implies the estimate \reftext{\eqref{adjusted_Hoelder_VTEX1}}. The second inequality \reftext{\eqref{multiple_Hoelder_VTEX1}} follows from \reftext{\eqref{adjusted_Hoelder_VTEX1}} by induction,
since for any integer $n$ satisfying $n+1\leq p$ it holds that
\begin{equation*}
	\big\|\prod _{i=1}^{n+1} f_{i}\big\|_{\mathcal L(p/n+1)} \leq \big\|
	\prod _{i=1}^{n} f_{i}\big\|_{\mathcal L(p/n)} \big\|f_{n+1}\big\|_{
		\mathcal L(p/1)} \leq \prod _{i=1}^{n+1} \big\|f_{i}\big\|_{
		\mathcal L(p/1)},
\end{equation*}
where the inductive assumption has been used in the last step. This finishes
the proof.
\end{proof}
%
%r3.2 #&#
\begin{remark}
\label{rem3.2}
Notice that the statement of the above lemma is valid for a much larger
range of parameters. So \reftext{\eqref{adjusted_Hoelder_VTEX1}} is valid for all real
$k,n,p \in (0,\infty ]$ without further conditions. Notice that
$L^{\alpha}(D)$ becomes a quasi-Banach space for $0 < \alpha < 1$, where
the triangle inequality in no longer valid. This also implies a nonsymmetric
version of \reftext{\eqref{multiple_Hoelder_VTEX1}} with general
$f_{i} \in L^{p_{i}}(D)$, see e.g. \cite[Section 8]{WhZy2015}. In what
follows we only require the reduced parameter range as in \reftext{Lemma~\ref{lem:_Hoelder-type_ineq_VTEX1}} and particularly estimates \reftext{\eqref{adjusted_Hoelder_VTEX1}} and \reftext{\eqref{multiple_Hoelder_VTEX1}} in the specific notation \reftext{\eqref{LL-def}}.
\end{remark}

Rational order Lebesgue spaces are convenient to claim Gevrey-$\delta $ regularity of integer powers $f^{k}$ of a function
$f = f(\boldsymbol{x},\boldsymbol{y})$ if $f$ is Gevrey-$\delta $ regular
with respect to $\boldsymbol{y}$.
%
%l3.3 #&#
\begin{lemma}%
%%LEAP%%%\label{lem3.3}
\label{upper_bound_for_f_derivative_VTEX1}
Let $p \geq 1$,
$\boldsymbol{\nu }\in
\mathcal F\setminus \left \{\boldsymbol{0}\right
\}$,
$\boldsymbol{R}= (R_{1},R_{2},\dots ) \in \mathbb R_{>0}^{\mathbb N}$ be strictly positive constants,
$\rho \in \mathbb R_{\geq 2}$ and
$C_{f} \in \mathbb R_{>0}$ so that the $\boldsymbol{y}$-derivatives of
a sufficiently regular
$f = f(\boldsymbol{x},\boldsymbol{y})\in {\mathcal B}(0,C_{f})$ for
$(\boldsymbol{x},\boldsymbol{y}) \in D \times J$ satisfy the bounds
%
%e27 #&#
\begin{equation}
	\label{f-delta-def}
	\|\partial ^{\boldsymbol{\eta }} f(\boldsymbol{y})\|_{L^{p}(D)} \leq
	\frac{C_{f} \left [\tfrac{1}{2} \right ]_{|\boldsymbol{\eta }|} }{\rho \,\boldsymbol{R}^{\boldsymbol{\eta }}}
	(|\boldsymbol{\eta }|!)^{\delta -1}, \qquad \forall \boldsymbol{y}
	\in J, \quad \forall \boldsymbol{\eta }\leq \boldsymbol{\nu }
	\text{ and } \left |\boldsymbol{\eta }\right |\geq 1.
	%%LEAP%%%\label{eq27}
\end{equation}
Then, for all integer powers $1 \leq k\leq p$ the function $f^{k}$ and
all its $\boldsymbol{y}$-derivatives belong to $L^{\frac{p}{k}}(D)$ and the following
estimates are valid
%
%e28 #&#
\begin{equation}
	\label{fk_derivative_VTEX1}
	\left \|{\partial ^{\boldsymbol{\nu }}(f(\boldsymbol{y})^{k})}\right
	\|_{\mathcal L(p/k)} \leq
	\frac{3^{k-1}C_{f}^{k} \left [\tfrac{1}{2} \right ]_{\left |\boldsymbol{\nu }\right |} }{\rho \,\boldsymbol{R}^{{\boldsymbol{\nu }}}}
	(\left |\boldsymbol{\nu }\right |!)^{\delta -1}, \qquad \forall
	\boldsymbol{y}\in J.
	%%LEAP%%%\label{eq28}
\end{equation}
\end{lemma}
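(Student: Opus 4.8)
The plan is to argue by induction on the power $k$, for $1 \le k \le p$, proving the bound \eqref{fk_derivative_VTEX1} simultaneously for every sub-multi-index $\boldsymbol{\mu}\le\boldsymbol{\nu}$ with $|\boldsymbol{\mu}|\ge 1$. This simultaneous formulation is essential, because the Leibniz expansion used below produces derivatives of $f^k$ of every order $\le\boldsymbol{\nu}$, and the hypothesis \eqref{f-delta-def} is available precisely for all $\boldsymbol{\eta}\le\boldsymbol{\nu}$. The base case $k=1$ is nothing but \eqref{f-delta-def} itself (note $3^{k-1}=1$), and for the zeroth-order term we record that the multiple H\"older inequality \eqref{multiple_Hoelder_VTEX1}, together with $\|f\|_{L^{p}(D)}\le C_{f}$ coming from $f\in\mathcal B(0,C_{f})$, yields $\|f^{k}\|_{\mathcal L(p/k)}\le C_{f}^{k}$.

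For the inductive step I would write $f^{k+1}=f^{k}\cdot f$ and apply the Leibniz rule,
\[
\partial^{\boldsymbol{\nu}}(f^{k+1})=\sum_{\boldsymbol{\eta}\le\boldsymbol{\nu}}\binom{\boldsymbol{\nu}}{\boldsymbol{\eta}}\,\partial^{\boldsymbol{\eta}}(f^{k})\,\partial^{\boldsymbol{\nu}-\boldsymbol{\eta}}f.
\]
Taking the $\mathcal L(p/(k+1))$-norm, the triangle inequality and the generalized H\"older inequality \eqref{adjusted_Hoelder_VTEX1} with exponents $k$ and $n=1$ (so that $k+1\le p$) reduce each summand to the product $\|\partial^{\boldsymbol{\eta}}(f^{k})\|_{\mathcal L(p/k)}\,\|\partial^{\boldsymbol{\nu}-\boldsymbol{\eta}}f\|_{\mathcal L(p/1)}$. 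I would then separate the two boundary terms $\boldsymbol{\eta}=\boldsymbol{0}$ and $\boldsymbol{\eta}=\boldsymbol{\nu}$ from the interior sum over $\boldsymbol{0}<\boldsymbol{\eta}<\boldsymbol{\nu}$. The boundary terms are handled directly: the first using $\|f^{k}\|_{\mathcal L(p/k)}\le C_{f}^{k}$ and \eqref{f-delta-def} for $\partial^{\boldsymbol{\nu}}f$, the second using the inductive bound for $\partial^{\boldsymbol{\nu}}(f^{k})$ and $\|f\|_{L^{p}(D)}\le C_{f}$; each contributes at most $C_{f}^{k+1}\,\left[\tfrac12\right]_{|\boldsymbol{\nu}|}(|\boldsymbol{\nu}|!)^{\delta-1}/(\rho\,\boldsymbol{R}^{\boldsymbol{\nu}})$ up to the multiplicative factors $1$ and $3^{k-1}$, respectively.

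For the interior sum I would insert the inductive bound on $\partial^{\boldsymbol{\eta}}(f^{k})$ and the hypothesis \eqref{f-delta-def} on $\partial^{\boldsymbol{\nu}-\boldsymbol{\eta}}f$. The $\boldsymbol{R}$-powers recombine exactly as $\boldsymbol{R}^{\boldsymbol{\eta}}\boldsymbol{R}^{\boldsymbol{\nu}-\boldsymbol{\eta}}=\boldsymbol{R}^{\boldsymbol{\nu}}$, the two factorial factors merge via \eqref{multiindex-est-1} into $(|\boldsymbol{\nu}|!)^{\delta-1}$, and the residual sum of binomials weighted by $\left[\tfrac12\right]_{|\boldsymbol{\eta}|}\left[\tfrac12\right]_{|\boldsymbol{\nu}-\boldsymbol{\eta}|}$ is bounded by $2\left[\tfrac12\right]_{|\boldsymbol{\nu}|}$ through the combinatorial identity \eqref{multiindex-est-7}.

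The delicate point — and the step I expect to be the real obstacle — is the bookkeeping of the constants $\rho$ and $3^{k-1}$. Each factor in the interior products carries its own $1/\rho$, so the interior contribution arises with $1/\rho^{2}$; the assumption $\rho\ge 2$ is exactly what converts the factor $2/\rho^{2}$ (the $2$ coming from \eqref{multiindex-est-7}) into $\le 1/\rho$, matching the single $1/\rho$ in the target \eqref{fk_derivative_VTEX1}. Summing the three pieces then produces the prefactor $1+3^{k-1}+3^{k-1}=1+2\cdot 3^{k-1}\le 3^{k}$, valid for every $k\ge 1$, which closes the induction with the required constant $3^{(k+1)-1}=3^{k}$. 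Collecting the estimates gives \eqref{fk_derivative_VTEX1} for the power $k+1$ and completes the proof.
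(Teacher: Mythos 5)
Your proposal is correct and follows essentially the same route as the paper's proof: induction on the power $k$, the Leibniz expansion of $f^{k}\cdot f$, the H\"older inequality \eqref{adjusted_Hoelder_VTEX1}, separation of the two boundary terms from the interior sum, and the combination of \eqref{multiindex-est-1} with \eqref{multiindex-est-7} and $\rho\ge 2$ to absorb the factor $2/\rho^{2}$, ending with the same constant bookkeeping $1+2\cdot 3^{k-1}\le 3^{k}$. Your explicit strengthening of the inductive statement to all sub-multi-indices $\boldsymbol{\mu}\le\boldsymbol{\nu}$ is a welcome clarification of a step the paper leaves implicit, but it is not a different argument.
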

\begin{proof}
We prove this statement by induction with respect to $k$. The basis of
the induction for $k=1$ is identical to the assumption \reftext{\eqref{f-delta-def}}. Suppose now that the statement \reftext{\eqref{fk_derivative_VTEX1}} is valid for $k = n$ and show it for
$k=n+1 \leq p$. By the Leibniz product rule we have
\begin{equation*}
	\partial ^{\boldsymbol{\nu }} (f^{n+1}) = \sum _{\boldsymbol{0}\leq
		\boldsymbol{\eta }\leq \boldsymbol{\nu }}
	\binom{\boldsymbol{\nu }}{\boldsymbol{\eta }} \partial ^{
		\boldsymbol{\eta }}(f^{n})\partial ^{\boldsymbol{\nu }-
		\boldsymbol{\eta }}f.
\end{equation*}
Since $p \geq n+1$, the triangle inequality is valid, hence
\begin{equation*}
	\|\partial ^{\boldsymbol{\nu }} (f^{n+1})\|_{\mathcal L(p/n+1)} \leq
	\|f^{n}\|_{\mathcal L(p/n)} \|\partial ^{
		\boldsymbol{\nu }}f\|_{\mathcal L(p/1)} + \|\partial ^{
		\boldsymbol{\nu }}(f^{n})\|_{\mathcal L(p/n)} \|f\|_{\mathcal L(p/1)} +
	\sum _{\boldsymbol{0}< \boldsymbol{\eta }< \boldsymbol{\nu }}
	\binom{\boldsymbol{\nu }}{\boldsymbol{\eta }} \|\partial ^{
		\boldsymbol{\eta }}(f^{n})\|_{\mathcal L(p/n)} \|\partial ^{
		\boldsymbol{\nu }-\boldsymbol{\eta }}f\|_{\mathcal L(p/1)},
\end{equation*}
where the H\"older inequality \reftext{\eqref{adjusted_Hoelder_VTEX1}} has been used in
the last step. According to the inductive assumption, the
first summand is bounded as in equation \reftext{\eqref{f-delta-def}} with
$\boldsymbol{\eta }=\boldsymbol{\nu }$, while for the second summand, the
bound \reftext{\eqref{fk_derivative_VTEX1}} with $k=n$ holds true. This together with the
bounds \reftext{\eqref{multiindex-est-1}}, \reftext{\eqref{multiindex-est-7}} and $\rho \geq 2$, implies
\begin{equation*}
	\begin{split} \|\partial ^{\boldsymbol{\nu }} (f^{n+1})\|_{\mathcal L(p/n+1)}
		&
		\leq
		\frac{C_{f}^{n+1} \left [\tfrac{1}{2} \right ]_{|\boldsymbol{\nu }|} }{\rho \,\boldsymbol{R}^{\boldsymbol{\nu }}}
		(|\boldsymbol{\nu }|!)^{\delta -1} +
		\frac{3^{n-1} C_{f}^{n+1} \left [\tfrac{1}{2} \right ]_{|\boldsymbol{\nu }|} }{\rho \, \boldsymbol{R}^{\boldsymbol{\nu }}}
		(|\boldsymbol{\nu }|!)^{\delta -1} \\
		& \qquad 
		+ \sum _{\boldsymbol{0}< \boldsymbol{\eta }<
			\boldsymbol{\nu }} \binom{\boldsymbol{\nu }}{\boldsymbol{\eta }}
		\frac{3^{n-1}C_{f}^{n} \left [\tfrac{1}{2} \right ]_{|\boldsymbol{\eta }|} }{\rho \,\boldsymbol{R}^{\boldsymbol{\eta }}}
		(|\boldsymbol{\eta }|!)^{\delta -1}
		\frac{C_{f}\left [\tfrac{1}{2} \right ]_{|\boldsymbol{\nu }-\boldsymbol{\eta }|} }{\rho \,\boldsymbol{R}^{\boldsymbol{\nu }-\boldsymbol{\eta }}}
		(|\boldsymbol{\nu }-\boldsymbol{\eta }|!)^{\delta -1}
		\\
		&
		\leq
		\frac{3^{n-1} C_{f}^{n+1} }{\rho \, \boldsymbol{R}^{\boldsymbol{\nu }}}
		(|\boldsymbol{\nu }|!)^{\delta -1} \left (
		\frac{ \left [\tfrac{1}{2} \right ]_{|\boldsymbol{\nu }|} }{3^{n-1}} +
		\left [\tfrac{1}{2} \right ]_{|\boldsymbol{\nu }|} + \frac{1}{\rho }
		\sum _{\boldsymbol{0}< \boldsymbol{\eta }< \boldsymbol{\nu }}
		\binom{\boldsymbol{\nu }}{\boldsymbol{\eta }} \left [\tfrac{1}{2}
		\right ]_{|\boldsymbol{\eta }|} \left [\tfrac{1}{2} \right ]_{|
			\boldsymbol{\nu }-\boldsymbol{\eta }|} \right )
		\\
		&
		\leq
		\frac{3^{n-1} C_{f}^{n+1} \left [\tfrac{1}{2} \right ]_{|\boldsymbol{\nu }|} }{\rho \, \boldsymbol{R}^{\boldsymbol{\nu }}}
		(|\boldsymbol{\nu }|!)^{\delta -1} \left (\frac{1}{3^{n-1}}+1+
		\frac{2}{\rho }\right ) \leq
		\frac{3^{n} C_{f}^{n+1} \left [\tfrac{1}{2} \right ]_{|\boldsymbol{\nu }|} }{\rho \, \boldsymbol{R}^{\boldsymbol{\nu }}}
		(|\boldsymbol{\nu }|!)^{\delta -1} .
	\end{split}
\end{equation*}
This finishes the proof.
\end{proof}

From the Sobolev embedding theorem, for every $v\in V$ and
$f\in V^{*}$ we have
%
%e29 #&#
\begin{align}
\left \|{u}\right \|_{\mathcal L(m+1/1)}= \left \|{u}\right \|_{L^{m+1}(D)}
\leq C_{m} \left \|{u}\right \|_{H^{1}_{0}(D)} = \left \|{u}\right \|_{V}
\label{Sobolev_ineq_VTEX1}
,
%%LEAP%%%\label{eq29}
\end{align}
where the Sobolev embedding constant $C_{m}$ could be calculated explicitly
as in \cite{Mizuguchi2017}.

%s3.2 #&#
\subsection{Well-posedness of the variational formulation}
\label{sec3.2}

For a fixed $\boldsymbol{y}$ we define the bilinear form
$A_{\boldsymbol{y}}:V\times V\rightarrow \mathbb R$ and a nonlinear operator
$T_{\boldsymbol{y}}(w,v): V \times V \rightarrow \mathbb R$
%
%e30 #&#
\begin{align}
\label{AT-def}
A_{\boldsymbol{y}}(w,v) := C_{m}^{2}\int _{D} a(\boldsymbol{y})
\nabla u\cdot \nabla v, \quad T_{\boldsymbol{y}}(w,v) :=\int _{D} b (
\boldsymbol{y})w^{m} v.
%%LEAP%%%\label{eq30}
\end{align}
In the view of bounds \reftext{\eqref{ab-bounds}} and \reftext{Lemma~\ref{lem:_Hoelder-type_ineq_VTEX1}}, we have
%
%e31 #&#
%e32 #&#
\begin{align}
A_{\boldsymbol{y}} (w,w) \geq \| w\|_{V}^{2}, \qquad A_{
	\boldsymbol{y}} (w,v) &\leq \frac{\overline{a}}{2} \| w\|_{V} \| v\|_{V},
\qquad w,v \in V,
\label{A_bound_VTEX1}
%%LEAP%%%\label{eq31}
\\
T_{ \boldsymbol{y}} ( w,v) &\leq \frac{\overline{b}}{2} \| w\|_{V}^{m}
\| v\|_{V}, \qquad w,v \in V.
\label{T_bound_VTEX1}
%%LEAP%%%\label{eq32}
\end{align}
Thus, for every $\boldsymbol{y}\in U$, the variational equivalent of \reftext{\eqref{gen_equation_VTEX1}} is the problem of finding a solution $u\in V$ such
that
%
%e33 #&#
\begin{align}
\label{semi_prob_VTEX1}
A_{\boldsymbol{y}}(u(\boldsymbol{y}),v)+T_{\boldsymbol{y}}(u(
\boldsymbol{y}),v)=\left \langle{f(\boldsymbol{y})},{v} \right
\rangle \quad \forall v\in V.
%%LEAP%%%\label{eq33}
\end{align}
Note that the uniqueness of a solution to \reftext{\eqref{semi_prob_VTEX1}} is generally
not guaranteed. For instance, the (real-valued) problem for
$a\equiv b\equiv 1$ with certain restriction on $m$ even has infinitely
many solutions with arbitrarily large norms, see
\cite[Theorem 7.2 and Remark 7.3]{Struwe2008} and
\cite{HansenSchwab13}.

In the following we introduce two different assumptions (\reftext{Assumption~\ref{General_b_Assump_VTEX1}} and \reftext{Assumption~\ref{Possitive_b_Assump_VTEX1}}) that are
sufficient to guarantee the existence and uniqueness of the solution to \reftext{\eqref{variational_form_VTEX1}}. Roughly speaking, \reftext{Assumption~\ref{General_b_Assump_VTEX1}} admits indefinite reaction term
$T_{\boldsymbol{y}}$, but requires that $\overline{b}$ and
$\overline{f}$ cannot be large simultaneously. If, however,
$T_{\boldsymbol{y}}$ is nonnegative, no further restrictions are required,
see \reftext{Assumption~\ref{Possitive_b_Assump_VTEX1}}. In both cases the unique solution
is bounded
%
%e34 #&#
\begin{equation}
\label{u-bound}
\|u\|_{V} \leq \overline{u},
%%LEAP%%%\label{eq34}
\end{equation}
where the upper bound $\overline{u}$ will be determined below.

We now give the details of the argument. The following assumption naturally
extends the result in \cite{HansenSchwab13}.
%
%a3.4 #&#
\begin{assumption}
\label{General_b_Assump_VTEX1}
For a fixed integer $m\geq 1$, there exists a positive constant
$\gamma < 1$ such that $\overline{b}$ and $\overline{f}$ satisfy
\begin{align*}
	\frac{\overline{b}}{2} = \frac{\gamma}{m\, \overline{f}^{m-1}} .
\end{align*}
\end{assumption}
For the case $m=1$, the problem in \reftext{\eqref{variational_form_VTEX1}} turns into
a linear reaction-diffusion problem. The bilinear form for this problem
is $V$-coercive for $\frac{\overline b}{2} < 1$
%
%e35 #&#
\begin{equation}
\label{A1m1}
C_{m}^{2}\int _{D} a(\boldsymbol{y}) |\nabla w|^{2} + \int _{D} b (
\boldsymbol{y})w^{2} \geq \|w\|_{V}^{2} - \frac{\overline b}{2} \|w\|_{L^{2}(D)}^{2}
\geq \bigg(1- \frac{\overline b}{2}\bigg) \|w\|_{V}^{2}.
%%LEAP%%%\label{eq35}
\end{equation}
This property is guaranteed by \reftext{Assumption~\ref{General_b_Assump_VTEX1}} for
$m = 1$.

We now show that \reftext{Assumption~\ref{General_b_Assump_VTEX1}} {also guarantees that} \reftext{\eqref{variational_form_VTEX1}} has a unique solution by means of the Banach fixed-point
theorem. Let $u_{0}(\boldsymbol{y})=0$ and define
$u_{n+1}(\boldsymbol{y})$ as the unique solution of
%
%e36 #&#
\begin{align}
\label{contracting_mapping_VTEX1}
A_{\boldsymbol{y}}(u_{n+1}(\boldsymbol{y}),v) = \left \langle{f(
	\boldsymbol{y})},{v} \right \rangle - T_{\boldsymbol{y}}(u_{n}(
\boldsymbol{y}),v) \qquad \forall v\in V.
%%LEAP%%%\label{eq36}
\end{align}
The following Lemma shows that the sequence $\{u_{n}\}$ never leaves the
closed set
\begin{equation*}
{\mathcal B}(0,\overline{f}):=\left \{v\in V: \left \|{v}\right \|_{V}
\leq{\overline{f}}\right \}
\end{equation*}
and converges to a limit in ${\mathcal B}(0,\overline{f})$. Obviously, when
the sequence $\left \{u_{n}(\boldsymbol{y})\right \}$ admits a limit point,
it would be a solution of \reftext{\eqref{variational_form_VTEX1}}. Indeed, the following
Lemma proves the above statement.
%
%l3.5 #&#
\begin{lemma}%
%%LEAP%%%\label{lem3.5}
\label{lem:A1m2}
For every $\boldsymbol{y}\in U$ and an integer $m\geq 2$,
the sequence $\left \{\|u_{n}(\boldsymbol{y})\|_{V}\right \}$ is bounded
by $\overline{f}$ and the sequence $\left \{u_{n}(\boldsymbol{y})\right \}$ converges to a fixed point in
${\mathcal B}(0,{\overline{f}})$.
\end{lemma}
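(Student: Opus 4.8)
The plan is to recognize the iteration \eqref{contracting_mapping_VTEX1} as a fixed-point iteration for the solution map $\Phi\colon w \mapsto u$, where $u \in V$ solves $A_{\boldsymbol{y}}(u,v) = \langle f(\boldsymbol{y}),v\rangle - T_{\boldsymbol{y}}(w,v)$ for all $v\in V$, and to verify the two hypotheses of the Banach fixed-point theorem on the closed ball $\mathcal{B}(0,\overline{f})$: that $\Phi$ maps the ball into itself, and that $\Phi$ is a contraction with factor $\gamma<1$. Each step of the iteration is well-posed by Lax--Milgram, since $A_{\boldsymbol{y}}$ is coercive and bounded by \eqref{A_bound_VTEX1}.

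First I would prove the \emph{self-mapping} (boundedness) property by induction on $n$, the base case $u_0=0$ being trivial. Testing \eqref{contracting_mapping_VTEX1} with $v=u_{n+1}(\boldsymbol{y})$ and using coercivity from \eqref{A_bound_VTEX1} on the left, together with \eqref{Sobolev_dual_ineq_VTEX1}, the bound $\|f\|_{V^*}\le \overline{f}/2$, and the nonlinear bound \eqref{T_bound_VTEX1} on the right, I obtain after dividing by $\|u_{n+1}\|_V$ the recursion $\|u_{n+1}\|_V \le \tfrac{\overline{f}}{2} + \tfrac{\overline{b}}{2}\|u_n\|_V^m$. Assuming $\|u_n\|_V\le\overline{f}$ and inserting $\tfrac{\overline{b}}{2}=\gamma/(m\,\overline{f}^{\,m-1})$ from Assumption~\ref{General_b_Assump_VTEX1} gives $\|u_{n+1}\|_V \le \tfrac{\overline{f}}{2} + \tfrac{\gamma}{m}\,\overline{f}\le \overline{f}$, since $m\ge 2$ and $\gamma<1$ force $\gamma/m<1/2$. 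This closes the induction.

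Next, for the \emph{contraction} property I would subtract the equations \eqref{contracting_mapping_VTEX1} at levels $n$ and $n-1$, obtaining $A_{\boldsymbol{y}}(u_{n+1}-u_n,v)=-\int_D b(\boldsymbol{y})(u_n^m-u_{n-1}^m)\,v$, and test with $v=u_{n+1}-u_n$, so that coercivity controls the left side by $\|u_{n+1}-u_n\|_V^2$. The heart of the argument is the estimation of the right side: I would use the algebraic factorization $u_n^m-u_{n-1}^m=(u_n-u_{n-1})\sum_{j=0}^{m-1}u_n^{\,j}u_{n-1}^{\,m-1-j}$, so that each of the $m$ resulting integrands is a product of exactly $m+1$ factors. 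Applying the multiple H\"older inequality \eqref{multiple_Hoelder_VTEX1} with $p=m+1$, then the Sobolev embedding \eqref{Sobolev_ineq_VTEX1}, and finally the uniform bound $\|u_n\|_V,\|u_{n-1}\|_V\le\overline{f}$ from the previous step, each term is bounded by $\overline{f}^{\,m-1}\|u_n-u_{n-1}\|_V\|u_{n+1}-u_n\|_V$. Summing the $m$ terms and using $|b|\le\overline{b}/2$ yields $\|u_{n+1}-u_n\|_V^2 \le \tfrac{\overline{b}}{2}\,m\,\overline{f}^{\,m-1}\|u_n-u_{n-1}\|_V\|u_{n+1}-u_n\|_V$; dividing through and inserting Assumption~\ref{General_b_Assump_VTEX1} collapses the constant to exactly $\gamma$, giving $\|u_{n+1}-u_n\|_V\le\gamma\|u_n-u_{n-1}\|_V$.

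Finally, since $\gamma<1$, the sequence $\{u_n\}$ is Cauchy in the Banach space $V$ and hence converges; as $\mathcal{B}(0,\overline{f})$ is closed and all iterates lie in it by the first step, the limit stays in $\mathcal{B}(0,\overline{f})$. Passing to the limit in \eqref{contracting_mapping_VTEX1}, the nonlinear term being continuous by the same factorization estimate, identifies the limit as a fixed point, i.e.\ a solution of \eqref{semi_prob_VTEX1}. I expect the main obstacle to be precisely this nonlinear difference estimate: choosing the right factorization and matching the number of factors to the exponent $p=m+1$ of the multiple H\"older inequality is what makes the power-type nonlinearity tractable and produces the clean contraction constant $\gamma$.
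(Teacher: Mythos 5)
Your proposal is correct and follows essentially the same route as the paper's proof: induction with the test function $v=u_{n+1}$ for the uniform bound, the factorization $a^m-b^m=(a-b)\sum_{j=0}^{m-1}a^{m-1-j}b^j$ combined with the multiple H\"older inequality \eqref{multiple_Hoelder_VTEX1} and the Sobolev embedding \eqref{Sobolev_ineq_VTEX1} for the contraction estimate, and the Banach fixed-point theorem to conclude. The only (welcome) addition is that you spell out the continuity argument for passing to the limit in the iteration, which the paper treats as obvious.
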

\begin{proof}
We will prove boundedness of the sequence by induction with respect to
$n$. The Lax-Milgram Lemma and \reftext{\eqref{ab-bounds}} imply
\begin{equation*}
	\|u_{1}\|_{V} \leq \|f\|_{V^{*}} \leq \frac{\overline f}{2}
\end{equation*}
and hence $u_{1}\in {\mathcal B}(0,{\overline{f}})$. Assume now that all
$u_{1}, \dots , u_{n}$ belong to this neighbourhood and prove that the
same holds for $u_{n+1}$. For this, we substitute
$v=u_{n+1}(\boldsymbol{y})$ into \reftext{\eqref{contracting_mapping_VTEX1}} and recall \reftext{\eqref{Sobolev_dual_ineq_VTEX1}}, \reftext{\eqref{ab-bounds}}, \reftext{\eqref{T_bound_VTEX1}} and \reftext{Assumption~\ref{General_b_Assump_VTEX1}} to obtain
\begin{align*}
	\left \|{u_{n+1}(\boldsymbol{y})}\right \|_{V} \leq
	\frac{\overline{f}}{2} + \frac{\overline{b}}{2} \left \|{u_{n}(
		\boldsymbol{y})}\right \|_{V}^{m} \leq \frac{\overline{f}}{2} +
	\frac{\gamma}{m\, \overline{f}^{m-1}}\, \overline{f}^{m} \leq
	\overline{f},
\end{align*}
where in the last step we have used that $\gamma <1$ and $m\geq 2$. This
shows that $u_{n}\in {\mathcal B}(0,{\overline{f}})$ for all $n$. We now
prove that the sequence converges to a limit in $V$. We have that
$u_{n}(\boldsymbol{y})$ is the solution of
\begin{align*}
	A_{\boldsymbol{y}}(u_{n}(\boldsymbol{y}),v) = \left \langle{f(
		\boldsymbol{y})},{v} \right \rangle - T_{\boldsymbol{y}}(u_{n-1}(
	\boldsymbol{y}),v) \quad \forall v\in V.
\end{align*}
Subtract both sides of the above equation from \reftext{\eqref{contracting_mapping_VTEX1}} and set
$v=u_{n+1}(\boldsymbol{y})-u_{n}(\boldsymbol{y})$ to obtain
\begin{align*}
	C_{m}^{2}\int _{D} a(\boldsymbol{y}) \left |\nabla u_{n+1}(
	\boldsymbol{y})-\nabla u_{n}(\boldsymbol{y})\right |^{2} = \int _{D} b(
	\boldsymbol{y})\left (\left [u_{n-1}(\boldsymbol{y})\right ]^{m}-
	\left [u_{n}(\boldsymbol{y})\right ]^{m}\right )(u_{n+1}(
	\boldsymbol{y})-u_{n}(\boldsymbol{y})).
\end{align*}
The left-hand side is bounded by $\|u_{n+1} - u_{n}\|_{V}^{2}$ from below.
To obtain an upper bound for the right-hand side, recall the elementary
identity $a^{m}-b^{m}=(a-b)\sum _{j=0}^{m-1}a^{m-1-j}b^{j}$ and \reftext{Lemma~\ref{lem:_Hoelder-type_ineq_VTEX1}}. This implies
%
%e37 #&#
\begin{equation}
	\label{b_term_bound_VTEX1}
	\begin{split} \|u_{n+1}(\boldsymbol{y}) - u_{n}(\boldsymbol{y})\|_{V}^{2}
		&\leq \frac{\overline{b}}{2} \left \|{u_{n+1}(\boldsymbol{y})-u_{n}(
			\boldsymbol{y})}\right \|_{\mathcal L(m+1/1)} \left \|{u_{n}(
			\boldsymbol{y})-u_{n-1}(\boldsymbol{y})}\right \|_{\mathcal L(m+1/1)}
		\sum _{j=0}^{m-1} \left \|{u_{n}(\boldsymbol{y})}\right \|_{
			\mathcal L(m+1/1)}^{m-1-j}\left \|{u_{n-1}(\boldsymbol{y})}\right \|_{
			\mathcal L(m+1/1)}^{j}
		\\
		&\leq \frac{\overline{b}}{2} \left \|{u_{n+1}(\boldsymbol{y})-u_{n}(
			\boldsymbol{y})}\right \|_{V} \left \|{u_{n}(\boldsymbol{y})-u_{n-1}(
			\boldsymbol{y})}\right \|_{V} \sum _{j=0}^{m-1} \left \|{u_{n}(
			\boldsymbol{y})}\right \|_{V}^{m-1-j}\left \|{u_{n-1}(\boldsymbol{y})}
		\right \|_{V}^{j},
	\end{split}
	%
	%%LEAP%%%\label{eq37}
\end{equation}
where \reftext{\eqref{Sobolev_ineq_VTEX1}} has been used in the last step. Since
$\left \{u_{n}\right \}\subset {\mathcal B}(0,{\overline{f}})$ for all
$n$, the sum in the right-hand side of \reftext{\eqref{b_term_bound_VTEX1}} is bounded
by $m \overline{f}^{m-1}$. This and \reftext{Assumption~\ref{General_b_Assump_VTEX1}} imply
the contraction property
\begin{align*}
	\left \|{u_{n+1}(\boldsymbol{y})-u_{n}(\boldsymbol{y})}\right \|_{V}
	\leq \gamma \left \|{u_{n}(\boldsymbol{y})-u_{n-1}(\boldsymbol{y})}
	\right \|_{V}.
\end{align*}
Since $\gamma <1$, the sequence
$\left \{u_{n}(\boldsymbol{y})\right \}$ converges to a fixed point in
$ {\mathcal B}(0,{\overline{f}})$ by the Banach fixed point theorem.
\end{proof}
Estimate \reftext{\eqref{A1m1}} and \reftext{Lemma~\ref{lem:A1m2}} imply \reftext{\eqref{u-bound}} with
%
%e38 #&#
\begin{align}
\label{norm_u_bound_VTEX1}
\overline{u}:=
\begin{cases}
	\frac{\overline{f}}{1-\gamma} &\text{ if } m=1,
	\\
	\overline{f} &\text{ if } m\geq 2.
\end{cases} 
%%LEAP%%%\label{eq38}
\end{align}
According to the \reftext{Assumption~\ref{General_b_Assump_VTEX1}}, the magnitude of
$b$ will decrease as $m$ and $\overline{f}$ grow. As an alternative, we
consider \reftext{Assumption~\ref{Possitive_b_Assump_VTEX1}}, which helps to relax this,
when $T_{\boldsymbol{y}}$ is nonnegative.

%a3.6 #&#
\begin{assumption}
\label{Possitive_b_Assump_VTEX1}
The function $b(\boldsymbol{y})$ is non-negative for almost
$(\boldsymbol{x},\boldsymbol{y})\in D\times U$ and $m$ is an odd positive
integer such that $(d,m)\in \mathcal M$.
\end{assumption}
In case \reftext{Assumption~\ref{Possitive_b_Assump_VTEX1}} is satisfied, we choose the
operator $\mathcal S_{\boldsymbol{y}}: V\to V^{*}$ such that
$\left \langle{\mathcal S_{\boldsymbol{y}}(u)},{v} \right \rangle =A_{
\boldsymbol{y}}(u,v)+T_{\boldsymbol{y}}(u,v)$. As an immediate consequence
of \reftext{\eqref{A_bound_VTEX1}} and \reftext{\eqref{T_bound_VTEX1}}, the operator
$\mathcal S_{\boldsymbol{y}}$ is continuous and bounded. Moreover,
$\mathcal S_{\boldsymbol{y}}$ is a strictly monotone operator, since
$b(\boldsymbol{y})\geq 0$ and $(\cdot )^{m}$ is a
monotonically increasing function when $m$ is an odd integer.
Indeed, for every $w,v\in V$ such that $w\neq v$, we have
\begin{align*}
\left \langle{\mathcal S_{\boldsymbol{y}}(w)-\mathcal S_{
		\boldsymbol{y}}(v)},{w-v} \right \rangle = C_{m}^{2}\int _{D} a(
\boldsymbol{y}) \left |\nabla w-\nabla v\right |^{2}+\int _{D} b(
\boldsymbol{y}) (w^{m}-v^{m})(w-v) \geq \left \|{w-v}\right \|_{V}^{2}
>0.
\end{align*}
Substitute $v= 0$ in the above inequality and notice that
$\mathcal S_{\boldsymbol{y}}(0)=0$ to arrive at
%
%e39 #&#
\begin{align}
\label{coercivity_A_VTEX1}
\left \langle{\mathcal S_{\boldsymbol{y}}(w)},{w} \right \rangle = A_{
	\boldsymbol{y}}(w,w)+T_{\boldsymbol{y}}(w,w) \geq \left \|{w}\right
\|_{V}^{2} \qquad \forall w \in V,
%%LEAP%%%\label{eq39}
\end{align}
and thus $\mathcal S_{\boldsymbol{y}}$ is coercive. By the Minty-Browder
Theorem, the operator $\mathcal S_{\boldsymbol{y}}$ is bijective, and hence
the problem \reftext{\eqref{variational_form_VTEX1}} has a uniquely determined solution in
$V$. In this case \reftext{\eqref{coercivity_A_VTEX1}} gives
\begin{equation*}
\|u\|_{V}^{2} \leq \left \langle{\mathcal S_{\boldsymbol{y}}(u)},{u}
\right \rangle = \left \langle{f},{u} \right \rangle \leq \|f\|_{V^{*}}
\|u\|_{V}
\end{equation*}
and hence, by \reftext{\eqref{ab-bounds}}, we may choose
$\overline{u}= \overline{f} $ for the upper bound \reftext{\eqref{u-bound}}.

For each $\boldsymbol{y}\in U$, we denote by
$\widetilde A_{\boldsymbol{y}}(u, w,v)$ the linearization of \reftext{\eqref{semi_prob_VTEX1}} mapping 
$ V \times V\times V \rightarrow \mathbb R$ as
%
%e40 #&#
\begin{align}
\label{Atilde-def}
\widetilde A_{\boldsymbol{y}}(u,w,v)= C_{m}^{2} \int _{D} a(
\boldsymbol{y}) \nabla w\cdot \nabla v + m\int _{D} b(\boldsymbol{y})
\, u^{m-1}\, w \, v.
%%LEAP%%%\label{eq40}
\end{align}
The following Lemma shows the coercivity of
$\widetilde A_{\boldsymbol{y}}$, which is required for the regularity proof
in Section~\ref{sec:_main_result_VTEX1}.
%
%l3.7 #&#
\begin{lemma}%
%%LEAP%%%\label{lem3.7}
\label{lem:coercive-type}
Let $a,b$ and $f$ satisfy \reftext{Assumption~\ref{General_b_Assump_VTEX1}} or \reftext{Assumption~\ref{Possitive_b_Assump_VTEX1}}. The operator
$\widetilde A_{\boldsymbol{y}}$ is uniformly coercive in
$\boldsymbol{y}$, i.e.
%
%e41 #&#
\begin{align}
	\label{coercive_lower_bound_VTEX1}
	\widetilde A_{\boldsymbol{y}}(u, v,v )\geq C_{A} \left \|{v}\right \|_{V}^{2},
	\quad \forall v\in V \text{ and } \forall u \in {\mathcal B}(0,
	\overline{f}),
	%%LEAP%%%\label{eq41}
\end{align}
where $C_{A}:= 1$ if \reftext{Assumption~\ref{Possitive_b_Assump_VTEX1}} holds and
$C_{A}:= 1-\gamma $ if \reftext{Assumption~\ref{General_b_Assump_VTEX1}} holds.
\end{lemma}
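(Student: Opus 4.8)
The plan is to insert the test pair $w=v$ into the definition \eqref{Atilde-def} and estimate the two resulting integrals separately. Writing
\[
\widetilde A_{\boldsymbol{y}}(u,v,v)= A_{\boldsymbol{y}}(v,v) + m\int _{D} b(\boldsymbol{y})\, u^{m-1}\, v^{2},
\]
I would first observe that the diffusion part is exactly the bilinear form $A_{\boldsymbol{y}}$ evaluated at $(v,v)$, which is bounded below by $\|v\|_V^2$ by the coercivity estimate \eqref{A_bound_VTEX1}. It then remains to control the reaction integral, and this is where the two assumptions enter differently.

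Under \reftext{Assumption~\ref{Possitive_b_Assump_VTEX1}} the argument is immediate: here $m$ is odd, so $m-1$ is even and $u^{m-1}\geq 0$ pointwise, while $b(\boldsymbol{y})\geq 0$ and $v^{2}\geq 0$. Hence the whole reaction integrand is nonnegative, the reaction term only helps the lower bound, and one obtains $\widetilde A_{\boldsymbol{y}}(u,v,v)\geq \|v\|_V^2$, i.e. the claim with $C_{A}=1$.

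Under \reftext{Assumption~\ref{General_b_Assump_VTEX1}} the coefficient $b$ may change sign, so I would instead estimate the reaction term from below by its negative absolute value, $m\int_{D} b\,u^{m-1}v^{2}\geq -m\tfrac{\overline b}{2}\int_{D}|u|^{m-1}v^{2}$, using $|b|\leq \overline b/2$ from \eqref{ab-bounds}. The key step is to bound $\int_{D}|u|^{m-1}v^{2}$: viewing it as the $L^{1}(D)=\mathcal L(m+1/m+1)$ norm of a product of $m+1$ factors, namely $m-1$ copies of $|u|$ together with two copies of $|v|$, the multiple Hölder inequality \eqref{multiple_Hoelder_VTEX1} of \reftext{Lemma~\ref{lem:_Hoelder-type_ineq_VTEX1}} with $p=m+1$ and $\ell=m+1$ yields $\int_{D}|u|^{m-1}v^{2}\leq \|u\|_{\mathcal L(m+1/1)}^{m-1}\|v\|_{\mathcal L(m+1/1)}^{2}$. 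The Sobolev embedding \eqref{Sobolev_ineq_VTEX1} then converts these into $\|u\|_V^{m-1}\|v\|_V^{2}$, and since $u\in\mathcal B(0,\overline f)$ we have $\|u\|_V\leq \overline f$, so $\int_{D}|u|^{m-1}v^{2}\leq \overline f^{\,m-1}\|v\|_V^{2}$.

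Finally I would substitute the calibration $\tfrac{\overline b}{2}=\tfrac{\gamma}{m\,\overline f^{\,m-1}}$ of \reftext{Assumption~\ref{General_b_Assump_VTEX1}}: the factor $m\,\overline f^{\,m-1}$ cancels exactly, leaving $m\int_{D} b\,u^{m-1}v^{2}\geq -\gamma\|v\|_V^{2}$. Combining with the diffusion lower bound gives $\widetilde A_{\boldsymbol{y}}(u,v,v)\geq (1-\gamma)\|v\|_V^{2}$, the claim with $C_{A}=1-\gamma$. The main obstacle is the reaction estimate in the indefinite case: one must apply the generalized Hölder inequality in precisely the form that respects the admissible range of $m$ dictated by the embedding $H^{1}_{0}(D)\hookrightarrow L^{m+1}(D)$, since it is exactly this embedding that places all $m+1$ factors in $\mathcal L(m+1/1)$ and guarantees integrability of the product. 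The bound is sharp in the sense that \reftext{Assumption~\ref{General_b_Assump_VTEX1}} was calibrated so that the cancellation produces exactly $\gamma<1$.
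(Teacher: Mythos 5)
Your proof is correct and follows essentially the same route as the paper's: nonnegativity of $b\,u^{m-1}$ disposes of the reaction term under Assumption~\ref{Possitive_b_Assump_VTEX1}, while under Assumption~\ref{General_b_Assump_VTEX1} the H\"older/Sobolev bound $\int_D|u|^{m-1}v^2\leq\|u\|_V^{m-1}\|v\|_V^2$ combined with the calibration $\tfrac{\overline b}{2}=\tfrac{\gamma}{m\,\overline f^{\,m-1}}$ yields the loss of exactly $\gamma$. The only cosmetic difference is that the paper treats $m=1$ and $m\geq 2$ separately, whereas your argument handles both uniformly.
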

\begin{proof}
\reftext{Assumption~\ref{Possitive_b_Assump_VTEX1}} sets $b$ non-negative and $m$ an odd
integer. This implies that $b(\boldsymbol{y}) u^{m-1}$ is nonnegative and
\begin{align*}%
	\widetilde A_{\boldsymbol{y}}(u, v,v )\geq C_{m}^{2}\left \|{v}
	\right \|_{H^{1}_{0}(D)}^{2} + m\int _{D} b(\boldsymbol{y})\, u^{m-1}
	\, v^{2} \geq \left \|{v}\right \|_{V}^{2}.
\end{align*}
This shows that $C_{A} = 1$ in this case. If, instead, \reftext{Assumption~\ref{General_b_Assump_VTEX1}} is valid, analogous considerations imply
\begin{align*}
	\widetilde A_{\boldsymbol{y}}(u, v,v )\geq \left (1- m\,
	\frac{\overline{b\,}}{2}\left \|{u}\right \|_{V}^{m-1}\right )\left
	\|{v}\right \|_{V}^{2}.
\end{align*}
For $m=1$, we have $\overline{b}/2=\gamma $, and therefore
$C_{A}=1-\gamma $. In case $m\geq 2$, notice that
$u \in {\mathcal B}(0,\overline{f})$, we obtain
\begin{align*}
	\widetilde A_{\boldsymbol{y}}(u, v,v ) \geq \left (1-m\left (
	\frac{\gamma }{m \overline{f}^{m-1}}\right ) \overline{f}^{m-1}
	\right )\left \|{v}\right \|_{V}^{2} \geq \left (1-\gamma \right )
	\left \|{v}\right \|_{V}^{2}.
\end{align*}
It also shows that $C_{A}=1-\gamma $ and finishes the proof.
\end{proof}

%s4 #&#
\section{Parametric regularity}
%%LEAP%%%\label{sec4}
\label{sec:_main_result_VTEX1}

%s4.1 #&#
\subsection{Parametric regularity in $H^{1}(D)$}
\label{sec4.1}

The following theorem is the first main regularity result of this paper.
%
%t4.1 #&#
\begin{theorem}%
%%LEAP%%%\label{thm4.1}
\label{gevrey_regularity_for_semilinear_VTEX1}
Let the coefficients $a,b$ and the right-hand side $f$ of \reftext{\eqref{variational_form_VTEX1}} satisfy \reftext{Assumption~\ref{Assumption}} for some
$\delta \geq 1$ and suppose moreover that either \reftext{Assumption~\ref{General_b_Assump_VTEX1}} or \reftext{Assumption~\ref{Possitive_b_Assump_VTEX1}} hold. Then
the solution $u$ of \reftext{\eqref{variational_form_VTEX1}} is of class Gevrey-$
\delta $. More precisely, the following estimates are valid for all
$\boldsymbol{\nu }\in
\mathcal F\setminus \left \{\boldsymbol{0}\right
\}$ and $\boldsymbol{y}\in U$
%
%e42 #&#
\begin{align}
	\label{u_V_norm_ref_VTEX1}
	\left \|{\partial ^{\boldsymbol{\nu }} u( \boldsymbol{y})}\right \|_{V}
	\leq
	\frac{C_{u} \rho ^{\left |\boldsymbol{\nu }\right |-1} \left [\tfrac{1}{2} \right ]_{\left |\boldsymbol{\nu }\right |} }{\boldsymbol{R}^{{\boldsymbol{\nu }}}}
	(\left |\boldsymbol{\nu }\right |!)^{\delta -1}
	%%LEAP%%%\label{eq42}
\end{align}
and
%
%e43 #&#
\begin{align}
	\label{u_H1_norm_ref_VTEX1}
	\left \|{\partial ^{\boldsymbol{\nu }} u( \boldsymbol{y})}\right \|_{H^{1}_{0}(D)}
	\leq
	\frac{C_{u}\rho ^{\left |\boldsymbol{\nu }\right |-1}(\left |\boldsymbol{\nu }\right |!)^{\delta }}{C_{m}\boldsymbol{R}^{\boldsymbol{\nu }}}.
	%%LEAP%%%\label{eq43}
\end{align}
The constants in the above bounds are explicitly determined as
%
%e44 #&#
\begin{equation}
	\label{Cu-rho-def}
	C_{u}:=
	C_{A}^{-1}\overline{u}\left (\overline{a}+
	\overline{b}\,\overline{u}^{m-1}+1\right ) \qquad \text{and} \qquad
	\rho :=
	\max \left \{2,
	\frac{2\overline{a}+\overline{b}\left (m\overline{u}^{m-1}+ (3C_{u})^{m-1}(m+1)\right )}{C_{A}}+1
	\right \}.
	%%LEAP%%%\label{eq44}
\end{equation}
\end{theorem}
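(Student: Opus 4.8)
The plan is to prove both estimates simultaneously by induction on the order $\left|\boldsymbol{\nu}\right|$ of the multi-index, using the coercivity of the linearized operator $\widetilde A_{\boldsymbol{y}}$ established in \reftext{Lemma~\ref{lem:coercive-type}} as the workhorse at each step. The starting point is to differentiate the variational identity \reftext{\eqref{semi_prob_VTEX1}} with respect to the parameters. Applying $\partial^{\boldsymbol{\nu}}$ to the equation $A_{\boldsymbol{y}}(u,v)+T_{\boldsymbol{y}}(u,v)=\left\langle{f},{v}\right\rangle$ and invoking the Leibniz product rule on each of the three terms produces an identity in which the top-order term $\partial^{\boldsymbol{\nu}}u$ appears paired with $v$ precisely through $\widetilde A_{\boldsymbol{y}}(u,\partial^{\boldsymbol{\nu}}u,v)$, while all remaining terms involve only lower-order derivatives of $u$ together with derivatives of $a$, $b$, and the power nonlinearity $u^{m}$. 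Choosing the test function $v=\partial^{\boldsymbol{\nu}}u(\boldsymbol{y})$ and using \reftext{\eqref{coercive_lower_bound_VTEX1}} then yields a lower bound $C_{A}\left\|{\partial^{\boldsymbol{\nu}}u}\right\|_{V}^{2}$ on the left, so that dividing by $\left\|{\partial^{\boldsymbol{\nu}}u}\right\|_{V}$ reduces the task to bounding the right-hand side, which consists of the differentiated diffusion term, the differentiated right-hand side $f$, and the differentiated reaction term.

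The main obstacle, as the introduction and \reftext{Lemma~\ref{upper_bound_for_f_derivative_VTEX1}} anticipate, is the treatment of the power nonlinearity. After separating off the top-order contribution that was absorbed into $\widetilde A_{\boldsymbol{y}}$, the reaction term contributes derivatives of the form $\partial^{\boldsymbol{\eta}}(b\,u^{m})$ with $\boldsymbol{\eta}$ strictly below $\boldsymbol{\nu}$ (together with a correction accounting for the difference between differentiating $u^{m}$ directly and the linearized expression $m\,u^{m-1}\partial^{\boldsymbol{\nu}}u$). To control $\partial^{\boldsymbol{\eta}}(u^{m})$ I would invoke \reftext{Lemma~\ref{upper_bound_for_f_derivative_VTEX1}} with $p=m+1$ and $k=m$: since by the inductive hypothesis each lower-order derivative of $u$ satisfies the bound \reftext{\eqref{u_V_norm_ref_VTEX1}}, which (using \reftext{\eqref{Sobolev_ineq_VTEX1}}) furnishes exactly the $\mathcal L(m+1/1)$ hypothesis \reftext{\eqref{f-delta-def}} required by that lemma, the power $u^{m}$ and its parametric derivatives are bounded in $\mathcal L(m+1/m)$ with the factor $3^{m-1}$ appearing in \reftext{\eqref{fk_derivative_VTEX1}}. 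The generalized H\"older inequality \reftext{\eqref{adjusted_Hoelder_VTEX1}} then pairs $\partial^{\boldsymbol{\eta}}(u^{m})\in\mathcal L(m+1/m)$ with the test function (viewed in $\mathcal L(m+1/1)$) to close the duality pairing, exactly the purpose for which the rational-exponent calculus of Section~\ref{sec3.1} was set up.

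Assembling the estimate, I would combine the differentiated coefficients via \reftext{\eqref{abf_assumption_VTEX1}}, the inductive bounds on the lower-order derivatives of $u$, and the power-nonlinearity bound, and then collapse the resulting double sums over multi-indices using the combinatorial identities \reftext{\eqref{multiindex-est-1}}, \reftext{\eqref{multiindex-est-7}}, \reftext{\eqref{multiindex-est-3}}, \reftext{\eqref{multiindex-est-6}}, and especially the nested estimate \reftext{\eqref{multiindex-est-5}}, which is tailor-made for the doubly-indexed contribution arising from the reaction term. The multifactorial bound \reftext{\eqref{multiindex-est-1}} is what converts the product of Gevrey factors $(\left|\boldsymbol{\eta}\right|!)^{\delta-1}(\left|\boldsymbol{\nu}-\boldsymbol{\eta}\right|!)^{\delta-1}$ into the single factor $(\left|\boldsymbol{\nu}\right|!)^{\delta-1}$, so that the Gevrey exponent is preserved. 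The geometric factor $\rho^{\left|\boldsymbol{\nu}\right|-1}$ is precisely the device that lets the accumulated constants at step $\left|\boldsymbol{\nu}\right|$ be reabsorbed: the definition of $\rho$ in \reftext{\eqref{Cu-rho-def}} is engineered so that the sum of all the constants produced by the diffusion, reaction, and source contributions—including the factors $\overline{a}$, $\overline{b}$, $m\overline{u}^{m-1}$, and the crucial $(3C_{u})^{m-1}(m+1)$ coming from the $3^{m-1}$ in \reftext{\eqref{fk_derivative_VTEX1}}—is dominated by $C_{A}\rho$. The base case $\left|\boldsymbol{\nu}\right|=1$ is verified directly from the definition of $C_{u}$, and the second estimate \reftext{\eqref{u_H1_norm_ref_VTEX1}} follows immediately from \reftext{\eqref{u_V_norm_ref_VTEX1}} by dividing through by $C_{m}$ and using the crude bound $\left[\tfrac{1}{2}\right]_{\left|\boldsymbol{\nu}\right|}\leq\left|\boldsymbol{\nu}\right|!$ from \reftext{\eqref{ff-estimates}}.
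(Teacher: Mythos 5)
Your proposal is correct and follows essentially the same route as the paper: induction on $\left|\boldsymbol{\nu}\right|$, differentiation of the variational identity so that the top-order derivative appears only through the linearized form $\widetilde A_{\boldsymbol{y}}$, testing with $\partial^{\boldsymbol{\nu}}u$ and invoking the coercivity of Lemma~\ref{lem:coercive-type}, controlling the powers $u^{m}$ and $u^{m-1}$ via Lemma~\ref{upper_bound_for_f_derivative_VTEX1} in the rational-exponent Lebesgue scale, and closing the induction with the falling-factorial combinatorics of Lemma~\ref{lem2.3} and the absorption of constants into $\rho$. The only cosmetic difference is that the paper first takes a single $\partial^{\boldsymbol{e}}$ derivative (so that $\partial^{\boldsymbol{e}}(u^{m})=m\,u^{m-1}\partial^{\boldsymbol{e}}u$ exactly) before applying $\partial^{\boldsymbol{\nu}}$, which avoids the Fa\`a di Bruno bookkeeping implicit in your direct application of $\partial^{\boldsymbol{\nu}}$ to the reaction term.
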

To prove the above Theorem, we require auxiliary upper bounds for the derivatives
of the solution from \reftext{Lemma~\ref{lem:u-bnd}} and \reftext{Lemma~\ref{upper_bound_for_um_derivative_VTEX1}} below.
%
%l4.2 #&#
\begin{lemma}%
%%LEAP%%%\label{lem4.2}
\label{lem:u-bnd}
For sufficiently regular solutions of \reftext{\eqref{semi_prob_VTEX1}} there holds
%
%e45 #&#
\begin{equation}
	\label{u_V_norm_upper_VTEX1}
	\begin{split} C_{A}\left \|{\partial ^{\boldsymbol{\nu }+
				\boldsymbol{e}}u}\right \|_{V} &\leq
		\left \|{\partial ^{\boldsymbol{\nu }+
				\boldsymbol{e}}a}\right \|_{L^{\infty }(D)} \left \|{u}\right \|_{V} +
		\sum _{
			\boldsymbol{0}< \boldsymbol{\eta }\leq
			\boldsymbol{\nu }} \binom{\boldsymbol{\nu }}{\boldsymbol{\eta }}
		\left \|{\partial ^{\boldsymbol{\nu }+\boldsymbol{e}-
				\boldsymbol{\eta }}a}\right \|_{L^{\infty }(D)} \left \|{\partial ^{
				\boldsymbol{\eta }}u}\right \|_{V}
		\\
		&\quad +
		\left \|{\partial ^{\boldsymbol{\nu }+
				\boldsymbol{e}}b}\right \|_{L^{\infty }(D)} \left \|{u^{m}}\right \|_{
			\mathcal L(m+1/m)}+ \sum _{
			\boldsymbol{0}< \boldsymbol{\eta }\leq
			\boldsymbol{\nu }} \binom{\boldsymbol{\nu }}{\boldsymbol{\eta }}
		\left \|{\partial ^{\boldsymbol{\nu }+\boldsymbol{e}-
				\boldsymbol{\eta }}b}\right \|_{L^{\infty }(D)} \left \|{\partial ^{
				\boldsymbol{\eta }}(u^{m})}\right \|_{\mathcal L(m+1/m)}
		\\
		&\quad + \left \|{\partial ^{\boldsymbol{\nu }+\boldsymbol{e}}f}
		\right \|_{V^{*}} + \sum _{\boldsymbol{0}< \boldsymbol{\eta }\leq
			\boldsymbol{\nu }} \binom{\boldsymbol{\nu }}{\boldsymbol{\eta }}
		\left \|{\partial ^{\boldsymbol{\eta }}a}\right \|_{L^{
				\infty }(D)} \left \|{\partial ^{\boldsymbol{\nu }+\boldsymbol{e}-
				\boldsymbol{\eta }}u}\right \|_{V}
		\\
		&\quad
		+ m\sum _{\boldsymbol{0}< \boldsymbol{\eta }\leq
			\boldsymbol{\nu }} \binom{\boldsymbol{\nu }}{\boldsymbol{\eta }}
		\left \|{\partial ^{\boldsymbol{\eta }}b}\right \|_{L^{
				\infty }(D)} \left \|{u^{m-1}}\right \|_{\mathcal L(m+1/m-1)} \|
		\partial ^{\boldsymbol{\nu }+\boldsymbol{e}-\boldsymbol{\eta }} u\|_{
			\mathcal L(m+1/1)}
		\\
		&\quad + m\sum _{\boldsymbol{0}< \boldsymbol{\eta }\leq
			\boldsymbol{\nu }} \sum _{
			\boldsymbol{0}< \boldsymbol{\ell }\leq
			\boldsymbol{\eta }} \binom{\boldsymbol{\nu }}{\boldsymbol{\eta }}
		\binom{\boldsymbol{\eta }}{\boldsymbol{\ell }}
		\left \|{\partial ^{\boldsymbol{\eta }-\boldsymbol{\ell }}b}\right \|_{L^{
				\infty }(D)} \left \|{\partial ^{\boldsymbol{\ell }}(u^{m-1})}\right
		\|_{\mathcal L(m+1/m-1)} \| \partial ^{\boldsymbol{\nu }+
			\boldsymbol{e}-\boldsymbol{\eta }} u\|_{\mathcal L(m+1/1)},
	\end{split}
	%
	%%LEAP%%%\label{eq45}
\end{equation}
where $\boldsymbol{e}$ is a unit multi-index in
$\mathcal F$, i.e. $\left |\boldsymbol{e}\right |=1$.
\end{lemma}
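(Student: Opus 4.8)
The plan is to differentiate the weak equation \eqref{semi_prob_VTEX1} twice with respect to the parameter, organizing the computation so that the linearization $\widetilde A_{\boldsymbol y}$ of \eqref{Atilde-def} appears on the left with $\partial^{\boldsymbol\nu+\boldsymbol e}u$ as its middle argument, and then to exploit its coercivity from Lemma~\ref{lem:coercive-type}. The crucial organizational choice is to differentiate \emph{first} by the single derivative $\partial^{\boldsymbol e}$ and only afterwards by $\partial^{\boldsymbol\nu}$; this is what produces the binomial weights $\binom{\boldsymbol\nu}{\boldsymbol\eta}$, rather than $\binom{\boldsymbol\nu+\boldsymbol e}{\boldsymbol\eta}$, recorded in \eqref{u_V_norm_upper_VTEX1}.

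First I would differentiate \eqref{semi_prob_VTEX1} by $\partial^{\boldsymbol e}$. Using $\partial^{\boldsymbol e}(u^m)=m\,u^{m-1}\partial^{\boldsymbol e}u$ and keeping on the left only the terms in which $\partial^{\boldsymbol e}$ falls on $u$, the product rule gives the linearized identity
\begin{equation*}
\widetilde A_{\boldsymbol y}(u,\partial^{\boldsymbol e}u,v)=\langle\partial^{\boldsymbol e}f,v\rangle-C_m^2\int_D\partial^{\boldsymbol e}a\,\nabla u\cdot\nabla v-\int_D\partial^{\boldsymbol e}b\,u^m v,\qquad\forall v\in V,
\end{equation*}
the terms where $\partial^{\boldsymbol e}$ hits $a$ or $b$ having been moved to the right.

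Next I would apply $\partial^{\boldsymbol\nu}$ to this identity and expand each factor by the Leibniz rule. The diffusion part of the left side becomes $\sum_{\boldsymbol0\le\boldsymbol\eta\le\boldsymbol\nu}\binom{\boldsymbol\nu}{\boldsymbol\eta}C_m^2\int_D\partial^{\boldsymbol\eta}a\,\nabla\partial^{\boldsymbol\nu+\boldsymbol e-\boldsymbol\eta}u\cdot\nabla v$, while the reaction part $m\,\partial^{\boldsymbol\nu}\int_D b\,u^{m-1}\partial^{\boldsymbol e}u\,v$ is a threefold product in $b$, $u^{m-1}$ and $\partial^{\boldsymbol e}u$, expanded with coefficients $\binom{\boldsymbol\nu}{\boldsymbol\eta}\binom{\boldsymbol\eta}{\boldsymbol\ell}$. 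I would retain on the left the single term in which all derivatives land on $u$, namely $C_m^2\int_D a\,\nabla\partial^{\boldsymbol\nu+\boldsymbol e}u\cdot\nabla v+m\int_D b\,u^{m-1}\partial^{\boldsymbol\nu+\boldsymbol e}u\,v=\widetilde A_{\boldsymbol y}(u,\partial^{\boldsymbol\nu+\boldsymbol e}u,v)$, and move everything else to the right. This bookkeeping reproduces exactly the remainder terms of \eqref{u_V_norm_upper_VTEX1}: the $\boldsymbol\eta=\boldsymbol0$ pieces from the right of the linearized identity give the three terms carrying $\partial^{\boldsymbol\nu+\boldsymbol e}a$, $\partial^{\boldsymbol\nu+\boldsymbol e}b$ and $\partial^{\boldsymbol\nu+\boldsymbol e}f$; the $\boldsymbol0<\boldsymbol\eta\le\boldsymbol\nu$ diffusion pieces give the two single sums in $a$; and the threefold reaction expansion splits into the case $\boldsymbol\ell=\boldsymbol0$ (the single sum with undifferentiated $u^{m-1}$) and the case $\boldsymbol0<\boldsymbol\ell\le\boldsymbol\eta$ (the double sum with $\partial^{\boldsymbol\ell}(u^{m-1})$).

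Finally I would test with $v=\partial^{\boldsymbol\nu+\boldsymbol e}u$. Since $u\in\mathcal B(0,\overline f)$ by the well-posedness of \eqref{semi_prob_VTEX1}, Lemma~\ref{lem:coercive-type} bounds the left side from below by $C_A\|\partial^{\boldsymbol\nu+\boldsymbol e}u\|_V^2$. On the right, the load term is estimated by the duality bound \eqref{Sobolev_dual_ineq_VTEX1} and the diffusion terms by Cauchy--Schwarz together with the $L^\infty$ bounds on $a$; each reaction term is handled by the generalized Hölder inequality of Lemma~\ref{lem:_Hoelder-type_ineq_VTEX1}, splitting $\int_D\partial^{\boldsymbol\nu+\boldsymbol e}b\,u^m v$ across $\mathcal L(m+1/m)$ and $\mathcal L(m+1/1)$, and the threefold products across $\mathcal L(m+1/m-1)$ and two factors $\mathcal L(m+1/1)$, with \eqref{Sobolev_ineq_VTEX1} converting the $\mathcal L(m+1/1)$-norm of each $u$-derivative into its $V$-norm. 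Every resulting bound carries one factor $\|\partial^{\boldsymbol\nu+\boldsymbol e}u\|_{\mathcal L(m+1/1)}\le\|\partial^{\boldsymbol\nu+\boldsymbol e}u\|_V$ coming from $v$; cancelling it yields \eqref{u_V_norm_upper_VTEX1}. I expect the main obstacle to be the careful accounting of the triple Leibniz expansion and the clean isolation of the leading term $m\int_D b\,u^{m-1}\partial^{\boldsymbol\nu+\boldsymbol e}u\,v$ that completes $\widetilde A_{\boldsymbol y}$, since any misgrouping would corrupt the exact weights $\binom{\boldsymbol\nu}{\boldsymbol\eta}\binom{\boldsymbol\eta}{\boldsymbol\ell}$ and the index constraints $\boldsymbol0<\boldsymbol\ell\le\boldsymbol\eta\le\boldsymbol\nu$.
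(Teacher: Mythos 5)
Your proposal is correct and follows essentially the same route as the paper: differentiate once by $\partial^{\boldsymbol e}$ to expose the linearization $\widetilde A_{\boldsymbol y}$, then apply $\partial^{\boldsymbol\nu}$ via the Leibniz rule (which is precisely what yields the weights $\binom{\boldsymbol\nu}{\boldsymbol\eta}$ rather than $\binom{\boldsymbol\nu+\boldsymbol e}{\boldsymbol\eta}$), isolate $\widetilde A_{\boldsymbol y}(u,\partial^{\boldsymbol\nu+\boldsymbol e}u,v)$, test with $v=\partial^{\boldsymbol\nu+\boldsymbol e}u$, invoke the coercivity of Lemma~\ref{lem:coercive-type}, estimate the right-hand side by H\"older and the Sobolev embedding, and cancel one factor of $\|\partial^{\boldsymbol\nu+\boldsymbol e}u\|_V$. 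Your accounting of the separated terms ($\boldsymbol\eta=\boldsymbol 0$ in the $a$-, $b$- and $f$-expansions, $\boldsymbol\ell=\boldsymbol 0$ in the threefold reaction expansion) matches the structure of \eqref{u_V_norm_upper_VTEX1}.
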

\begin{proof}
We recall the variational formulation \reftext{\eqref{variational_form_VTEX1}} and take
the $\boldsymbol{e}$-th derivative of both sides with respect
to $ \boldsymbol{y}$. Collecting the terms with
$\partial ^{\boldsymbol{e}} u$ on the left-hand side we obtain
%
%e46 #&#
\begin{equation}
	\label{ve_derivative_VTEX1}
	\begin{split} C_{m}^{2}\int _{D} a\, \partial ^{\boldsymbol{e}}\nabla u
		\cdot \nabla v + m\int _{D} b\, u^{m-1} \partial ^{\boldsymbol{e}}u\, v
		= - C_{m}^{2}\int _{D} \partial ^{\boldsymbol{e}} a \nabla u\cdot
		\nabla v - \int _{D} \partial ^{\boldsymbol{e}} b\, u^{m} v + C_{m}
		\int _{D} \partial ^{\boldsymbol{e}} f v .
	\end{split}
	%
	%%LEAP%%%\label{eq46}
\end{equation}
Observe that the left-hand side can be expressed as
$\widetilde A_{\boldsymbol{y}}(u,\partial ^{\boldsymbol{e}} u,v)$, where
the linearized form $\widetilde A_{\boldsymbol{y}}$ has been introduced
in \reftext{\eqref{Atilde-def}}. Notice that the first and the second argument of
this expression depend on $\boldsymbol{y}$. Therefore, if we take higher
$\boldsymbol{\nu }$-th order derivatives of \reftext{\eqref{ve_derivative_VTEX1}}, both
the first and the second argument of $\widetilde A_{\boldsymbol{y}}$ will
generate further terms by the Leibniz product rule. But the highest order
derivative $\partial ^{\boldsymbol{\nu }+ \boldsymbol{e}} u$ will only
appear in the term
$\widetilde A_{\boldsymbol{y}}(u,\partial ^{\boldsymbol{\nu }+
	\boldsymbol{e}} u,v)$. Isolating this term on the left-hand side, we obtain
%
%e47 #&#
\begin{equation}
	\label{vnu+ve_derivative_VTEX1}
	\begin{split} \widetilde A_{\boldsymbol{y}}(u,\partial ^{
			\boldsymbol{\nu }+\boldsymbol{e}} u,v) =& - C_{m}^{2} \sum _{
			\boldsymbol{0}\leq \boldsymbol{\eta }\leq \boldsymbol{\nu }} \binom{\boldsymbol{\nu }}{\boldsymbol{\eta }}  \int _{D} \partial ^{
			\boldsymbol{\nu }+\boldsymbol{e}-\boldsymbol{\eta }} a\, \partial ^{
			\boldsymbol{\eta }}\nabla u\cdot \nabla v - \sum _{\boldsymbol{0}
			\leq \boldsymbol{\eta }\leq \boldsymbol{\nu }} \binom{\boldsymbol{\nu }}{\boldsymbol{\eta }} \int _{D} \partial ^{
			\boldsymbol{\nu }+\boldsymbol{e}-\boldsymbol{\eta }} b\, \partial ^{
			\boldsymbol{\eta }} (u^{m}) v+ C_{m}\int _{D} \partial ^{
			\boldsymbol{\nu }+\boldsymbol{e}} f v
		\\
		& - C_{m}^{2} \sum _{\boldsymbol{0}< \boldsymbol{\eta }\leq
			\boldsymbol{\nu }} \binom{\boldsymbol{\nu }}{\boldsymbol{\eta }}
		\int _{D} \partial ^{\boldsymbol{\eta }} a\, \partial ^{
			\boldsymbol{\nu }+\boldsymbol{e}-\boldsymbol{\eta }}\nabla u\cdot
		\nabla v - m\sum _{\boldsymbol{0}< \boldsymbol{\eta }\leq
			\boldsymbol{\nu }} \binom{\boldsymbol{\nu }}{\boldsymbol{\eta }}
		\sum _{\boldsymbol{0}\leq \boldsymbol{\ell }\leq
			\boldsymbol{\eta }} \binom{\boldsymbol{\eta }}{\boldsymbol{\ell }}
		\int _{D} \partial ^{\boldsymbol{\eta }-\boldsymbol{\ell }} b
		\, \partial ^{\boldsymbol{\ell }} (u^{m-1}) \partial ^{
			\boldsymbol{\nu }+\boldsymbol{e}-\boldsymbol{\eta }} u\, v.
	\end{split}
	%
	%%LEAP%%%\label{eq47}
\end{equation}
Since \reftext{\eqref{vnu+ve_derivative_VTEX1}} is valid for all $v \in V$ we may select
specifically $v=\partial ^{\boldsymbol{\nu }+\boldsymbol{e}} u$. According
to \reftext{Lemma~\ref{lem:coercive-type}}, the left-hand side admits the bound
$\widetilde A_{\boldsymbol{y}}(u,\partial ^{\boldsymbol{\nu }+
	\boldsymbol{e}} u,\partial ^{\boldsymbol{\nu }+\boldsymbol{e}} u)
\geq C_{A} \left \|{ \partial ^{\boldsymbol{\nu }+\boldsymbol{e}} u}
\right \|_{ V}^{2}$. Applying the triangle and the Cauchy-Schwarz inequality,
we get the estimate
\begin{equation*}
	\label{vnu+ve_derivative2_VTEX1}
	\begin{split} C_{A} &\left \|{ \partial ^{\boldsymbol{\nu }+
				\boldsymbol{e}} u}\right \|_{V} \leq \sum _{\boldsymbol{0}\leq
			\boldsymbol{\eta }\leq \boldsymbol{\nu }} \binom{\boldsymbol{\nu }}{\boldsymbol{\eta }}  \|\partial ^{\boldsymbol{\nu }+
			\boldsymbol{e}-\boldsymbol{\eta }} a\|_{L^{\infty}(D)}\, \|\partial ^{
			\boldsymbol{\eta }} u\|_{V} + \sum _{\boldsymbol{0}\leq
			\boldsymbol{\eta }\leq \boldsymbol{\nu }} \binom{\boldsymbol{\nu }}{\boldsymbol{\eta }} \| \partial ^{\boldsymbol{\nu }+
			\boldsymbol{e}-\boldsymbol{\eta }} b\|_{L^{\infty}(D)}\, \|\partial ^{
			\boldsymbol{\eta }} (u^{m})\|_{ \mathcal L(m+1/m) }+ \|\partial ^{
			\boldsymbol{\nu }+\boldsymbol{e}} f \|_{V^{*}}
		\\
		+& \sum _{\boldsymbol{0}< \boldsymbol{\eta }\leq \boldsymbol{\nu }}
		\binom{\boldsymbol{\nu }}{\boldsymbol{\eta }} \| \partial ^{
			\boldsymbol{\eta }} a\|_{L^{\infty}(D)}\, \|\partial ^{
			\boldsymbol{\nu }+\boldsymbol{e}-\boldsymbol{\eta }} u\|_{V} + m\sum _{
			\boldsymbol{0}< \boldsymbol{\eta }\leq \boldsymbol{\nu }} \sum _{
			\boldsymbol{0}\leq \boldsymbol{\ell }\leq \boldsymbol{\eta }} \binom{\boldsymbol{\nu }}{\boldsymbol{\eta }}  \binom{\boldsymbol{\eta }}{\boldsymbol{\ell }} \|\partial ^{
			\boldsymbol{\eta }-\boldsymbol{\ell }} b\|_{L^{\infty}(D)}\, \|
		\partial ^{\boldsymbol{\ell }} (u^{m-1})\|_{ \mathcal L(m+1/m-1) } \,
		\| \partial ^{\boldsymbol{\nu }+\boldsymbol{e}-\boldsymbol{\eta }} u
		\|_{ \mathcal L(m+1/1)} ,
	\end{split}
\end{equation*}
where we have applied the H\"older inequality \reftext{\eqref{adjusted_Hoelder_VTEX1}} and the Sobolev embedding estimate \reftext{\eqref{Sobolev_ineq_VTEX1}} for the terms on the right-hand side. Notice that
$\|\partial ^{\boldsymbol{\nu }+\boldsymbol{e}} u\|_{V}$ is cancelled on
the both sides.
We conclude the proof by separating the terms with
$\boldsymbol{\eta }=\boldsymbol{0}$ in the first two sums and the term
with $\boldsymbol{\ell }=\boldsymbol{\eta }$ in the last sum.
\end{proof}
\begin{remark}
\label{rem4.3}
The existence and uniqueness of the partial derivatives
$\partial ^{\boldsymbol{\nu }+\boldsymbol{e}}u$ at some
$\boldsymbol{y}\in U$ for all $\boldsymbol{\nu }\in \mathcal F$ follows
by induction. Indeed, for the inductive step, the equation \reftext{\eqref{vnu+ve_derivative_VTEX1}} illustrates that
$\partial ^{\boldsymbol{\nu }+\boldsymbol{e}}u$ is determined by its lower
order derivatives. Thus, when combined with \reftext{Lemma~\ref{lem:coercive-type}}, it confirms the well-posedness of
$\partial ^{\boldsymbol{\nu }+\boldsymbol{e}}u$.
\end{remark}%
The right-hand side of \reftext{\eqref{u_V_norm_upper_VTEX1}} contains the terms of the
type
$\left \|{\partial ^{\boldsymbol{\mu }}(u^{k})}\right \|_{\mathcal L(m+1/k)}$,
where $k = m-1$ or $m$ and
$\boldsymbol{\mu }\in
\mathcal F\setminus \left \{\boldsymbol{0}\right
\}$. The following result is a corollary of the general \reftext{Lemma~\ref{upper_bound_for_f_derivative_VTEX1}} and determines explicit upper bounds
for these powers of $u$, if corresponding bounds for $u$ are available.
This result together with \reftext{Lemma~\ref{lem:u-bnd}} is the key ingredient in
the inductive proof of \reftext{Theorem~\ref{gevrey_regularity_for_semilinear_VTEX1}}.
%
%c4.4 #&#
\begin{corollary}%
%%LEAP%%%\label{cor4.4}
\label{upper_bound_for_um_derivative_VTEX1}
Let
$u(\boldsymbol{y})\in {\mathcal B}(0,\overline{u})$
and
$\boldsymbol{\mu }\in
\mathcal F\setminus \left \{{\boldsymbol{0}}
\right \}$, suppose that \reftext{\eqref{u_V_norm_ref_VTEX1}} holds for all multi-index
$\boldsymbol{\ell }\in \mathcal F\setminus \left
\{{\boldsymbol{0}}\right \}$ and
$\boldsymbol{\ell }\leq \boldsymbol{\mu }$, i.e.
%
%e48 #&#
\begin{align}
	\label{uV_deri_VTEX1}
	\left \|{\partial ^{\boldsymbol{\ell }} u (\boldsymbol{y})}\right \|_{V}
	\leq
	\frac{C_{u} \rho ^{\left |\boldsymbol{\ell }\right |-1} \left [\tfrac{1}{2} \right ]_{\left |\boldsymbol{\ell }\right |} }{\boldsymbol{R}^{{\boldsymbol{\ell }}}}
	(\left |\boldsymbol{\ell }\right |!)^{\delta -1}
	\quad \forall \boldsymbol{\ell }\leq
	\boldsymbol{\mu }\text{ and } \left |\boldsymbol{\ell }\right |\geq 1.
	%%LEAP%%%\label{eq48}
\end{align}
Then, for all positive integers $q\leq m+1$ the following estimates are
valid
%
%e49 #&#
\begin{equation}
	\label{um_derivative_VTEX1}
	\left \|{\partial ^{\boldsymbol{\mu }}(u(\boldsymbol{y})^{q})}\right
	\|_{\mathcal L(m+1/q)} \leq
	\frac{(3^{q-1}C_{u}^{q})\, \rho ^{\left |\boldsymbol{\mu }\right |-1} \left [\tfrac{1}{2} \right ]_{\left |\boldsymbol{\mu }\right |} }{\boldsymbol{R}^{{\boldsymbol{\mu }}}}
	(\left |\boldsymbol{\mu }\right |!)^{\delta -1}
	%%LEAP%%%\label{eq49}
\end{equation}
as long as $(d,m) \in \mathcal M$, i.e. $H^{1}(D)$ is continuously embedded
into $L^{m+1}(D)$.
\end{corollary}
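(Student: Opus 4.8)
The plan is to recognize \reftext{Corollary~\ref{upper_bound_for_um_derivative_VTEX1}} as a direct application of the general \reftext{Lemma~\ref{upper_bound_for_f_derivative_VTEX1}} with the choice $f=u$ and $p=m+1$, once the hypotheses are matched by an appropriate rescaling of the sequence $\boldsymbol{R}$. Since $(d,m)\in\mathcal M$ guarantees the continuous embedding $H^{1}_{0}(D)\hookrightarrow L^{m+1}(D)$, the first step is to pass from the $V$-norm bounds \reftext{\eqref{uV_deri_VTEX1}} to $L^{m+1}(D)$ bounds on the same derivatives. The Sobolev estimate \reftext{\eqref{Sobolev_ineq_VTEX1}} gives $\|\partial^{\boldsymbol{\ell}}u\|_{L^{m+1}(D)}\le\|\partial^{\boldsymbol{\ell}}u\|_{V}$ for every $\boldsymbol{\ell}\le\boldsymbol{\mu}$ with $|\boldsymbol{\ell}|\ge 1$, so that \reftext{\eqref{uV_deri_VTEX1}} yields
\[
\|\partial^{\boldsymbol{\ell}}u(\boldsymbol{y})\|_{L^{m+1}(D)}\le \frac{C_{u}\,\rho^{|\boldsymbol{\ell}|-1}\left[\tfrac12\right]_{|\boldsymbol{\ell}|}}{\boldsymbol{R}^{\boldsymbol{\ell}}}(|\boldsymbol{\ell}|!)^{\delta-1}.
\]

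The decisive observation is that the factor $\rho^{|\boldsymbol{\ell}|-1}$ can be absorbed into the sequence $\boldsymbol{R}$. Setting $\widetilde{\boldsymbol{R}}:=\boldsymbol{R}/\rho$ componentwise, which is again a sequence of strictly positive numbers, one has $\widetilde{\boldsymbol{R}}^{\boldsymbol{\ell}}=\boldsymbol{R}^{\boldsymbol{\ell}}/\rho^{|\boldsymbol{\ell}|}$, and hence the right-hand side above equals $\frac{C_{u}\left[\tfrac12\right]_{|\boldsymbol{\ell}|}}{\rho\,\widetilde{\boldsymbol{R}}^{\boldsymbol{\ell}}}(|\boldsymbol{\ell}|!)^{\delta-1}$. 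This is precisely the hypothesis \reftext{\eqref{f-delta-def}} of \reftext{Lemma~\ref{upper_bound_for_f_derivative_VTEX1}} with $f=u$, $p=m+1$, $C_{f}=C_{u}$, the same $\rho\ge 2$, and $\boldsymbol{R}$ replaced by $\widetilde{\boldsymbol{R}}$.

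It then remains only to confirm the membership $u(\boldsymbol{y})\in\mathcal B(0,C_{u})$ required by the lemma. From the definition \reftext{\eqref{Cu-rho-def}}, together with $C_{A}\le 1$ and the bound $\overline{a}\ge 2$ forced by \reftext{\eqref{ab-bounds}}, we obtain $C_{u}=C_{A}^{-1}\overline{u}\,(\overline{a}+\overline{b}\,\overline{u}^{m-1}+1)\ge\overline{u}$, so the standing assumption $u\in\mathcal B(0,\overline{u})$ indeed implies $u\in\mathcal B(0,C_{u})$. Applying \reftext{Lemma~\ref{upper_bound_for_f_derivative_VTEX1}} with $k=q$, which is admissible since $1\le q\le m+1=p$, delivers
\[
\|\partial^{\boldsymbol{\mu}}(u(\boldsymbol{y})^{q})\|_{\mathcal L(m+1/q)}\le \frac{3^{q-1}C_{u}^{q}\left[\tfrac12\right]_{|\boldsymbol{\mu}|}}{\rho\,\widetilde{\boldsymbol{R}}^{\boldsymbol{\mu}}}(|\boldsymbol{\mu}|!)^{\delta-1},
\]
and undoing the rescaling via $\rho\,\widetilde{\boldsymbol{R}}^{\boldsymbol{\mu}}=\rho^{1-|\boldsymbol{\mu}|}\boldsymbol{R}^{\boldsymbol{\mu}}$ restores the factor $\rho^{|\boldsymbol{\mu}|-1}$ and reproduces \reftext{\eqref{um_derivative_VTEX1}} verbatim.

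I do not expect any genuine analytic difficulty here, since all the substantive work, including the induction on the power and the use of the falling-factorial identities, is already contained in \reftext{Lemma~\ref{upper_bound_for_f_derivative_VTEX1}}. The only point that demands care is the bookkeeping of the $\rho$-powers: the derivative bounds for $u$ carry $\rho^{|\boldsymbol{\ell}|-1}$ in the numerator, whereas the lemma is phrased with a single $\rho$ in the denominator, and these two conventions are reconciled exactly by the substitution $\boldsymbol{R}\mapsto\boldsymbol{R}/\rho$. One should also note that this substitution leaves the requirement $\rho\ge 2$ untouched, as the value of $\rho$ itself is unchanged, so the lemma applies without modification.
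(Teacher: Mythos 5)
Your proposal is correct and follows essentially the same route as the paper: the paper's proof likewise notes $\overline{u}\le C_u$, $\rho\ge 2$, and the Sobolev embedding $\|u\|_{L^{m+1}(D)}\le\|u\|_V$, and then invokes Lemma~\ref{upper_bound_for_f_derivative_VTEX1} with $C_u$, $\rho$ and $\boldsymbol{R}/\rho$ in place of $C_f$, $\rho$ and $\boldsymbol{R}$. Your version merely makes the $\rho$-bookkeeping of that substitution explicit, which is exactly the right detail to verify.
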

\begin{proof}
Notice that $\overline{u}\leq C_{u}$, implying 
$u(\boldsymbol{y})\in {\mathcal B}(0,\overline{u})\subset {\mathcal B}(0,C_{u})$.
We also recall the definition of $\rho $ in \reftext{\eqref{Cu-rho-def}} implying
$\rho \geq 2$. If $(d,m) \in \mathcal M$, we have
$\|u\|_{L^{m+1}(D)} \leq \|u\|_{V}$, therefore the statement of the
lemma follows from \reftext{Lemma~\ref{upper_bound_for_f_derivative_VTEX1}} with
$C_{u}$, $\rho $ and $\boldsymbol{R}/\rho $ in place
of $C_{f}$, $\rho $ and $\boldsymbol{R}$ respectively.
\end{proof}
\begin{proof}[Proof of \reftext{Theorem~\ref{gevrey_regularity_for_semilinear_VTEX1}}]
Observe that \reftext{\eqref{u_H1_norm_ref_VTEX1}} is a simple corollary from \reftext{\eqref{u_V_norm_ref_VTEX1}} by changing from the $V$-norm to the
$H^{1}_{0}(D)$-norm and the trivial bound
$\left [\tfrac{1}{2} \right ]_{n} \leq n!$. Therefore it remains to prove \reftext{\eqref{u_V_norm_ref_VTEX1}}. Here we argue by induction with respect to the order
of the derivative $\boldsymbol{\nu }$. For the first-order derivatives
we use \reftext{\eqref{u_V_norm_upper_VTEX1}} with
$\boldsymbol{\nu }= \boldsymbol{0}$ and get
\begin{align*}
	C_{A}\left \|{\partial ^{\boldsymbol{e}}u}\right \|_{V} \leq \left \|{
		\partial ^{\boldsymbol{e}} a}\right \|_{L^{\infty }(D)}\left \|{u}
	\right \|_{V} + \left \|{\partial ^{\boldsymbol{e}} b}\right \|_{L^{
			\infty }(D)}\left \|{u^{m}}\right \|_{\mathcal L(m+1/m)} + \left \|{
		\partial ^{\boldsymbol{e}} f}\right \|_{V^{*}}.
\end{align*}
For the term with $u^{m}$ we recall the H\"older estimate \reftext{\eqref{multiple_Hoelder_VTEX1}}, the Sobolev embedding \reftext{\eqref{Sobolev_ineq_VTEX1}}, and \reftext{\eqref{u-bound}} to obtain the upper bound
$\left \|{u^{m}}\right \|_{\mathcal L(m+1/m)} \leq \left \|{u}\right
\|_{\mathcal L(m+1/1)}^{m} \leq \|u\|_{V}^{m}\leq \overline{u}^{m}$. Using
this and the regularity assumption \reftext{\eqref{abf_assumption_VTEX1}}, we derive
\begin{align*}
	\left \|{\partial ^{\boldsymbol{e}}u}\right \|_{V} \leq
	\frac{\overline{a} \left [\tfrac{1}{2} \right ]_{1} }{C_{A}\boldsymbol{R}^{\boldsymbol{e}}}
	\, \overline{u} +
	\frac{\overline{b} \left [\tfrac{1}{2} \right ]_{1} }{C_{A}\boldsymbol{R}^{\boldsymbol{e}}}
	\, \overline{u}^{m} +
	\frac{\overline{f} \left [\tfrac{1}{2} \right ]_{1} }{C_{A}\boldsymbol{R}^{\boldsymbol{e}}}
	= \overline{u}\left (\overline{a}+\overline{b}\,\overline{u}^{m-1}+1
	\right )
	\frac{\left [\tfrac{1}{2} \right ]_{1} }{C_{A}\boldsymbol{R}^{\boldsymbol{e}}}
	\leq C_{u}
	\frac{\left [\tfrac{1}{2} \right ]_{1} }{\boldsymbol{R}^{\boldsymbol{e}}}.
\end{align*}
Thus, the base of induction is satisfied for the constant $C_{u}$ defined
in \reftext{\eqref{Cu-rho-def}}. Suppose now that \reftext{\eqref{u_V_norm_ref_VTEX1}} is valid for
the $\boldsymbol{\nu }$-th derivative. Our aim is to show that the same
bound holds for the $(\boldsymbol{\nu }+ \boldsymbol{e})$-th order derivative,
where $\left |\boldsymbol{\nu }\right |\geq 1$ and $\boldsymbol{e}$ is
a unit multi-index. For this we combine \reftext{\eqref{u_V_norm_upper_VTEX1}} with regularity assumptions \reftext{\eqref{abf_assumption_VTEX1}}, the inductive assumption, and \reftext{\eqref{um_derivative_VTEX1}} for $\boldsymbol{\mu }\leq \boldsymbol{\nu }$ (this
is valid by \reftext{Corollary~\ref{upper_bound_for_um_derivative_VTEX1}} and the inductive
assumption) to arrive at
\begin{align*}
	C_{A}\left \|{\partial ^{\boldsymbol{\nu }+\boldsymbol{e}}u}\right \|_{V}
	&\leq
	\frac{\overline{u}\,\overline{a} \left [\tfrac{1}{2} \right ]_{\left |\boldsymbol{\nu }+\boldsymbol{e}\right |} }{\boldsymbol{R}^{\boldsymbol{\nu }+\boldsymbol{e}}}(
	\left |\boldsymbol{\nu }+\boldsymbol{e}\right |!)^{\delta -1} +
	\sum _{
		\boldsymbol{0}< \boldsymbol{\eta }\leq
		\boldsymbol{\nu }} \binom{\boldsymbol{\nu }}{\boldsymbol{\eta }}
	\frac{\overline{a} \left [\tfrac{1}{2} \right ]_{\left |\boldsymbol{\nu }+\boldsymbol{e}-\boldsymbol{\eta }\right |} }{\boldsymbol{R}^{\boldsymbol{\nu }+\boldsymbol{e}-\boldsymbol{\eta }}}(
	\left |\boldsymbol{\nu }+\boldsymbol{e}-\boldsymbol{\eta }\right |!)^{
		\delta -1}\,
	\frac{C_{u} \rho ^{\left |\boldsymbol{\eta }\right |-1} \left [\tfrac{1}{2} \right ]_{\left |\boldsymbol{\eta }\right |} }{\boldsymbol{R}^{{\boldsymbol{\eta }}}}
	(\left |\boldsymbol{\eta }\right |!)^{\delta -1}
	\\
	&\quad
	+
	\frac{\overline{u}^{m}\overline{b} \left [\tfrac{1}{2} \right ]_{\left |\boldsymbol{\nu }+\boldsymbol{e}\right |} }{\boldsymbol{R}^{\boldsymbol{\nu }+\boldsymbol{e}}}(
	\left |\boldsymbol{\nu }+\boldsymbol{e}\right |!)^{\delta -1} +
	\sum _{
		\boldsymbol{0}<\boldsymbol{\eta }\leq
		\boldsymbol{\nu }} \binom{\boldsymbol{\nu }}{\boldsymbol{\eta }}
	\frac{\overline{b} \left [\tfrac{1}{2} \right ]_{\left |\boldsymbol{\nu }+\boldsymbol{e}-\boldsymbol{\eta }\right |} }{\boldsymbol{R}^{\boldsymbol{\nu }+\boldsymbol{e}-\boldsymbol{\eta }}}(
	\left |\boldsymbol{\nu }+\boldsymbol{e}-\boldsymbol{\eta }\right |!)^{
		\delta -1}\,
	\frac{3^{m-1}C_{u}^{m} \,\rho ^{\left |\boldsymbol{\eta }\right |-1} \left [\tfrac{1}{2} \right ]_{\left |\boldsymbol{\eta }\right |} }{\boldsymbol{R}^{{\boldsymbol{\eta }}}}(
	\left |\boldsymbol{\eta }\right |!)^{\delta -1}
	\\
	&\quad +
	\frac{\overline{f} \left [\tfrac{1}{2} \right ]_{\left |\boldsymbol{\nu }+\boldsymbol{e}\right |} }{\boldsymbol{R}^{\boldsymbol{\nu }+\boldsymbol{e}}}
	(\left |\boldsymbol{\nu }+\boldsymbol{e}\right |!)^{\delta -1} +
	\sum _{\boldsymbol{0}< \boldsymbol{\eta }\leq \boldsymbol{\nu }}
	\binom{\boldsymbol{\nu }}{\boldsymbol{\eta }}
	\frac{\overline{a} \left [\tfrac{1}{2} \right ]_{\left |\boldsymbol{\eta }\right |} }{\boldsymbol{R}^{\boldsymbol{\eta }}}(
	\left |\boldsymbol{\eta }\right |!)^{\delta -1}\,
	\frac{C_{u} \rho ^{\left |\boldsymbol{\nu }+\boldsymbol{e}-\boldsymbol{\eta }\right |-1} \left [\tfrac{1}{2} \right ]_{\left |\boldsymbol{\nu }+\boldsymbol{e}-\boldsymbol{\eta }\right |} }{\boldsymbol{R}^{{\boldsymbol{\nu }+\boldsymbol{e}-\boldsymbol{\eta }}}}(
	\left |\boldsymbol{\nu }+\boldsymbol{e}-\boldsymbol{\eta }\right |!)^{
		\delta -1}
	\\
	&\quad
	+ m\sum _{\boldsymbol{0}< \boldsymbol{\eta }\leq
		\boldsymbol{\nu }} \binom{\boldsymbol{\nu }}{\boldsymbol{\eta }}
	\frac{\overline{u}^{m-1}\overline{b} \left [\tfrac{1}{2} \right ]_{\left |\boldsymbol{\eta }\right |} }{\boldsymbol{R}^{\boldsymbol{\eta }}}(
	\left |\boldsymbol{\eta }\right |!)^{\delta -1}
	\frac{C_{u}\, \rho ^{\left |\boldsymbol{\nu }+\boldsymbol{e}-\boldsymbol{\eta }\right |-1} \left [\tfrac{1}{2} \right ]_{\left |\boldsymbol{\nu }+\boldsymbol{e}-\boldsymbol{\eta }\right |} }{\boldsymbol{R}^{{\boldsymbol{\nu }+\boldsymbol{e}-\boldsymbol{\eta }}}}(
	\left |\boldsymbol{\nu }+\boldsymbol{e}-\boldsymbol{\eta }\right |!)^{
		\delta -1}
	\\
	&\quad + m\sum _{\boldsymbol{0}< \boldsymbol{\eta }\leq
		\boldsymbol{\nu }} \sum _{
		\boldsymbol{0}< \boldsymbol{\ell }\leq
		\boldsymbol{\eta }} \binom{\boldsymbol{\nu }}{\boldsymbol{\eta }}
	\binom{\boldsymbol{\eta }}{\boldsymbol{\ell }} 
	\frac{\overline{b} \left [\tfrac{1}{2} \right ]_{\left |\boldsymbol{\eta }-\boldsymbol{\ell }\right |} }{\boldsymbol{R}^{\boldsymbol{\eta }-\boldsymbol{\ell }}\, (\left |\boldsymbol{\eta }-\boldsymbol{\ell }\right |!)^{1-\delta}}
	\,
	\frac{3^{m-2} {C_{u}}^{m-1}\,\rho ^{\left |\boldsymbol{\ell }\right |-1} \left [\tfrac{1}{2} \right ]_{\left |\boldsymbol{\ell }\right |} }{\boldsymbol{R}^{{\boldsymbol{\ell }}}\, (\left |\boldsymbol{\ell }\right |!)^{1-\delta}}
	\frac{C_{u}\, \rho ^{\left |\boldsymbol{\nu }+\boldsymbol{e}-\boldsymbol{\eta }\right |-1}
		\left [\tfrac{1}{2} \right ]_{\left |\boldsymbol{\nu }+\boldsymbol{e}-\boldsymbol{\eta }\right |} }{\boldsymbol{R}^{{\boldsymbol{\nu }+\boldsymbol{e}-\boldsymbol{\eta }}}\,(\left |\boldsymbol{\nu }+\boldsymbol{e}-\boldsymbol{\eta }\right |!)^{1-\delta}}.
\end{align*}
Bound \reftext{\eqref{multiindex-est-1}} yields estimates for products of the factorial
terms. Observe that $0 < C_{A} \leq 1$ and therefore $\rho \geq 2$. This
helps to extract common factors on the right-hand side and obtain
\begin{align*}
	C_{A}\left \|{\partial ^{\boldsymbol{\nu }+\boldsymbol{e}}u}\right \|_{V}
	&\leq
	\frac{\rho ^{\left |\boldsymbol{\nu }\right |-1}(\left |\boldsymbol{\nu }+\boldsymbol{e}\right |!)^{\delta -1}}{ \boldsymbol{R}^{\boldsymbol{\nu }+\boldsymbol{e}}}
	\bigg(
	\overline{u}\, \overline{a}\left [\tfrac{1}{2}
	\right ]_{\left |\boldsymbol{\nu }+\boldsymbol{e}\right |} +C_{u} \,
	\overline{a} \sum _{
		\boldsymbol{0}< \boldsymbol{\eta }\leq
		\boldsymbol{\nu }} \binom{\boldsymbol{\nu }}{\boldsymbol{\eta }}
	\left [\tfrac{1}{2} \right ]_{\left |\boldsymbol{\nu }+
		\boldsymbol{e}-\boldsymbol{\eta }\right |} \left [\tfrac{1}{2}
	\right ]_{\left |\boldsymbol{\eta }\right |}
	\\
	&\quad +
	\overline{u}^{m}\, \overline{b}\left [\tfrac{1}{2}
	\right ]_{\left |\boldsymbol{\nu }+\boldsymbol{e}\right |} +
	3^{m-1} C_{u}^{m} \overline{b}\, \sum _{
		\boldsymbol{0}< \boldsymbol{\eta }\leq
		\boldsymbol{\nu }} \binom{\boldsymbol{\nu }}{\boldsymbol{\eta }}
	\left [\tfrac{1}{2} \right ]_{\left |\boldsymbol{\nu }+
		\boldsymbol{e}-\boldsymbol{\eta }\right |} \left [\tfrac{1}{2}
	\right ]_{\left |\boldsymbol{\eta }\right |} + \overline{f}\left [
	\tfrac{1}{2} \right ]_{\left |\boldsymbol{\nu }+\boldsymbol{e}\right |}
	\\
	&\quad + \overline{a}C_{u} \sum _{\boldsymbol{0}< \boldsymbol{\eta }
		\leq \boldsymbol{\nu }} \binom{\boldsymbol{\nu }}{\boldsymbol{\eta }} \left [\tfrac{1}{2} \right ]_{\left |
		\boldsymbol{\nu }+\boldsymbol{e}-\boldsymbol{\eta }\right |} \left [
	\tfrac{1}{2} \right ]_{\left |\boldsymbol{\eta }\right |}
	+m\,\overline{u}^{m-1}C_{u}\, \overline{b} \sum _{
		\boldsymbol{0}< \boldsymbol{\eta }\leq \boldsymbol{\nu }} \binom{\boldsymbol{\nu }}{\boldsymbol{\eta }} \left [\tfrac{1}{2}
	\right ]_{\left |\boldsymbol{\nu }+\boldsymbol{e}-\boldsymbol{\eta }
		\right |} \left [\tfrac{1}{2} \right ]_{\left |\boldsymbol{\eta }
		\right |}
	\\
	&\quad + m\, 3^{m-2}C_{u}^{m}
	\overline{b}\, \sum _{\boldsymbol{0}< \boldsymbol{\eta }\leq
		\boldsymbol{\nu }} \sum _{
		\boldsymbol{0}< \boldsymbol{\ell }\leq
		\boldsymbol{\eta }} \binom{\boldsymbol{\nu }}{\boldsymbol{\eta }}
	\binom{\boldsymbol{\eta }}{\boldsymbol{\ell }}
	\left [\tfrac{1}{2} \right ]_{\left |\boldsymbol{\eta }-
		\boldsymbol{\ell }\right |} \left [\tfrac{1}{2} \right ]_{\left |
		\boldsymbol{\ell }\right |} \left [\tfrac{1}{2} \right ]_{\left |
		\boldsymbol{\nu }+\boldsymbol{e}-\boldsymbol{\eta }\right |} \bigg).
\end{align*}
According to \reftext{\eqref{multiindex-est-6}} and \reftext{\eqref{multiindex-est-5}}, the
bound for the first four sums is
$[\tfrac{1}{2}]_{|\boldsymbol{\nu }+\boldsymbol{e}|}$, the last sum bounded
by
$3 [\tfrac{1}{2}]_{|\boldsymbol{\nu }+
	\boldsymbol{e}|}$. Recalling that $\overline{f} \leq \overline{u}$ we arrive
at
\begin{align*}
	\left \|{\partial ^{\boldsymbol{\nu }+\boldsymbol{e}}u}\right \|_{V}&
	\leq
	\frac{\overline{u}\left (\overline{a}+\overline{b}\,\overline{u}^{m-1}+1\right )+ 2\overline{a}C_{u} + m\overline{b} \,\overline{u}^{m-1} C_{u} + \overline{b}C_{u}^{m} \left (3^{m-1}+3m\,3^{m-2}\right ) }{C_{u} C_{A}}
	\,
	\frac{	C_{u}\,\rho ^{\left |\boldsymbol{\nu }\right |-1}(\left |\boldsymbol{\nu }+\boldsymbol{e}\right |!)^{\delta -1}}{\boldsymbol{R}^{\boldsymbol{\nu }+\boldsymbol{e}}}
	\left [\tfrac{1}{2} \right ]_{\left |\boldsymbol{\nu }+\boldsymbol{e}
		\right |}
	\\
	& \leq
	\frac{	C_{u}\,\rho ^{\left |\boldsymbol{\nu }\right |}(\left |\boldsymbol{\nu }+\boldsymbol{e}\right |!)^{\delta -1}}{\boldsymbol{R}^{\boldsymbol{\nu }+\boldsymbol{e}}}
	\left [\tfrac{1}{2} \right ]_{\left |\boldsymbol{\nu }+\boldsymbol{e}
		\right |} ,
\end{align*}
where we have used the definition \reftext{\eqref{Cu-rho-def}} of $\rho $ in the
last step. This completes the inductive argument and thereby
the proof of the theorem.
\end{proof}

%s4.2 #&#
\subsection{Parametric regularity in $H^{2}(D)$ and pointwise estimates}
\label{sec4.2}

If $D$ is a convex Lipschitz domain and the coefficients $a,b$ and the
right-hand side $f$ are sufficiently regular, it is actually possible to
prove that estimates of the type \reftext{\eqref{u_H1_norm_ref_VTEX1}} remain valid in
$H^{2}(D)$, and therefore in every space containing $H^{2}(D)$ with continuous
embedding. In particular, pointwise estimates follow from
$L^{\infty}(D)$ bounds.

If the data are sufficiently regular (see \reftext{Assumption~\ref{Assumption1}} below for the precise regularity assumptions), formal
integration by parts in the variational formulation \reftext{\eqref{variational_form_VTEX1}} delivers
\begin{equation*}
C_{m}^{2} a \Delta u = b u^{m} - C_{m}^{2}\nabla a
\cdot \nabla u - C_{m} f.
\end{equation*}
Since the right hand {side belongs to} $L^{2}(D)$, it shows
that $\Delta u\in L^{2}(D)$. The Leibniz product rule implies
%
%e50 #&#
\begin{equation}
\label{Delta_u_variational_form_VTEX1}
C_{m}^{2} a (\Delta \partial ^{\boldsymbol{\nu }} u) =
\partial ^{\boldsymbol{\nu }}(b u^{m} - C_{m}^{2}\nabla a \cdot
\nabla u - C_{m}f) - C_{m}^{2}\sum _{\boldsymbol{0}\leq
	\boldsymbol{\eta }< \boldsymbol{\nu }}
\binom{\boldsymbol{\nu }}{\boldsymbol{\eta }} \partial ^{
	\boldsymbol{\nu }-\boldsymbol{\eta }} a (\Delta \partial ^{
	\boldsymbol{\eta }} u).
%%LEAP%%%\label{eq50}
\end{equation}
The recurrent equation above implies that we can determine
$\Delta \partial ^{\boldsymbol{\nu }} u$ based on
$\Delta \partial ^{\boldsymbol{\eta }} u$ with
$\eta <\boldsymbol{\nu }$, thereby ensuring that
$\Delta \partial ^{\boldsymbol{\nu }} u \in L^{2}(D)$. Apply the triangle
and the Cauchy-Schwarz inequalities and recall that $a\geq 1$, and hence
%
%e51 #&#
\begin{equation}
\label{vnu-Delta-u}
C_{m}^{2}\|\Delta \partial ^{\boldsymbol{\nu }} u\|_{L^{2}(D)}
\leq \|C_{m}^{2} a (\Delta \partial ^{
	\boldsymbol{\nu }} u)\|_{L^{2}(D)} \leq \|\partial ^{
	\boldsymbol{\nu }} ( b u^{m} - C_{m}^{2}\nabla a \cdot \nabla u - C_{m}
f)\|_{L^{2}(D)} + C_{m}^{2}\sum _{\boldsymbol{0}\leq
	\boldsymbol{\eta }< \boldsymbol{\nu }}
\binom{\boldsymbol{\nu }}{\boldsymbol{\eta }} \|\partial ^{
	\boldsymbol{\nu }-\boldsymbol{\eta }} a\|_{L^{\infty}(D)} \|\Delta
\partial ^{\boldsymbol{\eta }} u\|_{L^{2}(D)}.
%%LEAP%%%\label{eq51}
\end{equation}
The second term admits Gevrey-$\delta $ bounds if $L^{\infty}(D)$ and
$V^{*}$-norms for $a$ and $f$ in \reftext{Assumption~\ref{Assumption}} are replaced
by the $W^{1,\infty}(D)$ and $L^{2}(D)$-norms respectively. More precisely,
we replace \reftext{Assumption~\ref{Assumption}} by the following.
%
%a4.5 #&#
\begin{assumption}
\label{Assumption1}
For all fixed values $\boldsymbol{y}\in U \in \mathbb R^{s}$ with
$s \in \mathbb N$, the coefficients
$a(\boldsymbol{y}) \in W^{1,\infty}(D)$,
$b(\boldsymbol{y}) \in L^{\infty}(D)$ and $f(y)\in L^{2}(D)$. Moreover,
for all $\boldsymbol{\nu }\in \mathbb N^{s}$ there exist positive constants
$\boldsymbol{R}=(R_{1},R_{2},\dots )$ independent of $s$ such that
\begin{align*}
	\left \|{\partial ^{\boldsymbol{\nu }} a(\boldsymbol{y})}\right \|_{W^{1,
			\infty }(D)} \leq \frac{\overline{a}}{2}
	\frac{(\left |\boldsymbol{\nu }\right |!)^{\delta }}{(2\boldsymbol{R})^{\boldsymbol{\nu }}}
	, \qquad \left \|{\partial ^{\boldsymbol{\nu }} b(\boldsymbol{y})}
	\right \|_{L^{\infty }(D)} \leq \frac{\overline{b}}{2}
	\frac{(\left |\boldsymbol{\nu }\right |!)^{\delta }}{(2\boldsymbol{R})^{\boldsymbol{\nu }}}
	, \qquad \left \|{\partial ^{\boldsymbol{\nu }} f(\boldsymbol{y})}
	\right \|_{L^{2}(D)} \leq \frac{\overline{f}}{2}
	\frac{(\left |\boldsymbol{\nu }\right |!)^{\delta }}{(2\boldsymbol{R})^{\boldsymbol{\nu }}}
	.
\end{align*}
Assume moreover, that $m \in \mathbb N$ is such that $H^{1}(D)$ is continuously
embedded into $L^{2m}(D)$, i.e. $(d,2m-1) \in \mathcal M$, see \reftext{\eqref{cond:_m_d_restriction_VTEX1}}.
\end{assumption}
According to \reftext{\eqref{ff-estimates}} \reftext{Assumption~\ref{Assumption1}} implies
%
%e52 #&#
\begin{align}
\label{abf1_assumption_VTEX1}
\left \|{\partial ^{\boldsymbol{\nu }} a(\boldsymbol{y})}\right \|_{W^{1,
		\infty }(D)} \leq
\frac{\overline{a} \left [\tfrac{1}{2} \right ]_{\left |\boldsymbol{\nu }\right |} }{\boldsymbol{R}^{\boldsymbol{\nu }}}
(\left |\boldsymbol{\nu }\right |!)^{\delta -1} , \qquad \left \|{
	\partial ^{\boldsymbol{\nu }} b(\boldsymbol{y})}\right \|_{L^{\infty }(D)}
\leq
\frac{\overline{b} \left [\tfrac{1}{2} \right ]_{\left |\boldsymbol{\nu }\right |} }{\boldsymbol{R}^{\boldsymbol{\nu }}}
(\left |\boldsymbol{\nu }\right |!)^{\delta -1} , \qquad \left \|{
	\partial ^{\boldsymbol{\nu }} f(\boldsymbol{y})}\right \|_{L^{2}(D)}
\leq
\frac{\overline{f} \left [\tfrac{1}{2} \right ]_{\left |\boldsymbol{\nu }\right |} }{\boldsymbol{R}^{\boldsymbol{\nu }}}(
\left |\boldsymbol{\nu }\right |!)^{\delta -1}.
%%LEAP%%%\label{eq52}
\end{align}
The regularity of $\Delta \partial ^{\boldsymbol{\nu }} u$ can now be established
by bounding the terms in the right-hand side of \reftext{\eqref{vnu-Delta-u}}. Indeed,
for $\boldsymbol{\nu }=\boldsymbol{0}$, we use \reftext{\eqref{Delta_u_variational_form_VTEX1}} and derive the estimate
%
%e53 #&#
\begin{align}
\label{C_delta}
C_{m}^{2}\|\Delta u\|_{L^{2}(D)} &\leq \|b u^{m} - C_{m}^{2}\nabla a
\cdot \nabla u - C_{m}\,f\|_{L^{2}(D)} \leq \left \|{b}\right \|_{L^{
		\infty }(D)} \left \|{u^{m}}\right \|_{L^{2}(D)} + C_{m}^{2} \left \|{
	\nabla a}\right \|_{L^{\infty }(D)} \left \|{\nabla u}\right \|_{L^{2}(D)}
+ C_{m} \left \|{f}\right \|_{L^{2}(D)}
\notag
\\
&\leq \frac{1}{2} \left ( \overline{b} (C_{m}^{-1}\overline{u})^{m} + C_{m}
\overline{a}\, \overline{u} + C_{m} \overline{f}\right ).
%%LEAP%%%\label{eq53}
\end{align}
For all
$\boldsymbol{\nu }\in \mathcal F\setminus \left \{\boldsymbol{0}
\right \}$, the Leibniz general product rule and the triangle inequality
yield for the first term of \reftext{\eqref{vnu-Delta-u}}
%
%e54 #&#
\begin{equation}
\label{term-1}
\begin{split} \|\partial ^{\boldsymbol{\nu }} &(b u^{m} - C_{m}^{2}
	\nabla a \cdot \nabla u - C_{m}\,f)\|_{L^{2}(D)}
	\\
	&\leq C_{m} \|\partial ^{\boldsymbol{\nu }} f\|_{L^{2}(D)} + \sum _{
		\boldsymbol{0}\leq \boldsymbol{\eta }\leq \boldsymbol{\nu }}
	\binom{\boldsymbol{\nu }}{\boldsymbol{\eta }} \big( \|\partial ^{
		\boldsymbol{\nu }-\boldsymbol{\eta }} b\|_{L^{\infty}(D)} \|\partial ^{
		\boldsymbol{\eta }} (u^{m})\|_{L^{2}(D)} + C_{m}^{2}\|\partial ^{
		\boldsymbol{\nu }-\boldsymbol{\eta }} \nabla a\|_{L^{\infty}(D)} \|
	\partial ^{\boldsymbol{\eta }}\nabla u\|_{L^{2}(D)} \big)
	\\
	&\leq C_{m}\|\partial ^{\boldsymbol{\nu }} f\|_{L^{2}(D)}
	+ \|\partial ^{\boldsymbol{\nu }} b\|_{L^{\infty }(D)}
	\|u^{m}\|_{L^{2}(D)} + C_{m}^{2}\|\partial ^{\boldsymbol{\nu }}
	\nabla a\|_{L^{\infty }(D)} \|\nabla u\|_{L^{2}(D)}
	\\
	&\quad +
	\frac{(|\boldsymbol{\nu }|!)^{\delta -1}}{\boldsymbol{R}^{\boldsymbol{\nu }}}
	\sum _{
		\boldsymbol{0}< \boldsymbol{\eta }\leq
		\boldsymbol{\nu }} \binom{\boldsymbol{\nu }}{\boldsymbol{\eta }}
	\big( 3^{m-1} (C_{2m-1} C_{m}^{-1} C_{u})^{m} \overline{b} + C_{m} C_{u}
	\overline{a} \big) \rho ^{|\boldsymbol{\eta }|-1} \left [\tfrac{1}{2}
	\right ]_{|\boldsymbol{\nu }-\boldsymbol{\eta }|} \left [\tfrac{1}{2}
	\right ]_{|\boldsymbol{\eta }|}
	\\
	&\leq \big(
	\overline{b}\,(C_{m}^{-1}\overline{u})^{m} + C_{m}
	\overline{a}\, \overline{u} + 3^{m}(C_{2m-1} C_{m}^{-1}
	C_{u})^{m} \overline{b} + 3C_{m} C_{u}
	\overline{a} +C_{m}\overline{f} \big)
	\frac{ \rho ^{|\boldsymbol{\nu }|-1} \left [\tfrac{1}{2} \right ]_{|\boldsymbol{\nu }|} }{\boldsymbol{R}^{\boldsymbol{\nu }}}(|
	\boldsymbol{\nu }|!)^{\delta -1}{.}
\end{split}
%
%%LEAP%%%\label{eq54}
\end{equation}
In the above estimate we used \reftext{\eqref{multiindex-est-3}} and that the
following bounds hold for all
$\boldsymbol{\eta }\in \mathcal F\setminus \left \{\boldsymbol{0}
\right \} $
\begin{equation*}
\|\partial ^{\boldsymbol{\eta }} u\|_{L^{2m}(D)} \leq C_{2m-1} \|
\partial ^{\boldsymbol{\eta }} u\|_{H^{1}_{0}(D)} = C_{2m-1} C_{m}^{-1}
\|\partial ^{\boldsymbol{\eta }} u\|_{V} \leq C_{2m-1} C_{m}^{-1} C_{u}
\frac{ \rho ^{|\boldsymbol{\eta }|-1} \left [\tfrac{1}{2} \right ]_{|\boldsymbol{\eta }|} }{\boldsymbol{R}^{\boldsymbol{\eta }}}(|
\boldsymbol{\eta }|!)^{\delta -1},
\end{equation*}
i.e. the Sobolev embedding for $(d,2m-1) \in \mathcal M$, and \reftext{Theorem~\ref{gevrey_regularity_for_semilinear_VTEX1}}. By \reftext{Lemma~\ref{upper_bound_for_f_derivative_VTEX1}} this implies
\begin{equation*}
\|\partial ^{\boldsymbol{\eta }} (u^{m})\|_{L^{2}(D)} = \|\partial ^{
	\boldsymbol{\eta }} (u^{m})\|_{\mathcal L(2m/m)} \leq
3^{m-1} (C_{2m-1} C_{m}^{-1} C_{u})^{m}\,
\frac{ \rho ^{|\boldsymbol{\eta }|-1} \left [\tfrac{1}{2} \right ]_{|\boldsymbol{\eta }|} }{\boldsymbol{R}^{\boldsymbol{\eta }}}(|
\boldsymbol{\eta }|!)^{\delta -1}
\end{equation*}
and hence \reftext{\eqref{term-1}}. The argument can now be completed by induction,
which is the subject of the following Theorem.
%
%t4.6 #&#
\begin{theorem}%
%%LEAP%%%\label{thm4.6}
\label{thm:2}
Suppose the assumptions of \reftext{Theorem~\ref{gevrey_regularity_for_semilinear_VTEX1}} hold and additionally \reftext{Assumption~\ref{Assumption1}} is satisfied. Then
$\Delta \partial ^{\boldsymbol{\nu }} u \in L^{2}(D)$ for all
$\boldsymbol{\nu }\in \mathcal F\setminus \left \{
\boldsymbol{0}\right \}$ and there holds
%
%e55 #&#
\begin{equation}
	\label{Delta-u-derivative}
	C_{m}^{2}\|\partial ^{\boldsymbol{\nu }}\Delta u\|_{L^{2}(D)} \leq
	\frac{2 {C}_{\Delta} \tilde{\rho}^{|\boldsymbol{\nu }|-1} \left [\tfrac{1}{2} \right ]_{|\boldsymbol{\nu }|} }{\boldsymbol{R}^{\boldsymbol{\nu }}}
	(|\boldsymbol{\nu }|!)^{\delta -1},
	%%LEAP%%%\label{eq55}
\end{equation}
where $\tilde \rho := \max (4\overline{a},\rho )$ and
\begin{align*}
	C_{\Delta } = \left (\left (1+
	\frac{\overline{a}}{2}\right )\left (C_{m}\overline{f} + \overline{b}
	\,(C_{m}^{-1}\overline{u})^{m} + C_{m} \overline{a}\, \overline{u}
	\right ) + (3C_{2m-1} C_{m}^{-1} C_{u})^{m} \overline{b} + 3C_{m} C_{u}
	\overline{a} \right ).
\end{align*}
\end{theorem}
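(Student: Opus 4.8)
The plan is to prove \eqref{Delta-u-derivative} by induction on the order $|\boldsymbol{\nu}|\geq 1$, with the recurrence \eqref{vnu-Delta-u} serving as the engine. Writing $M_{\boldsymbol{\nu}}:=C_m^2\|\Delta\partial^{\boldsymbol{\nu}}u\|_{L^2(D)}$, dividing through shows that \eqref{vnu-Delta-u} reads $M_{\boldsymbol{\nu}}\leq T_1(\boldsymbol{\nu})+\sum_{\boldsymbol{0}\leq\boldsymbol{\eta}<\boldsymbol{\nu}}\binom{\boldsymbol{\nu}}{\boldsymbol{\eta}}\|\partial^{\boldsymbol{\nu}-\boldsymbol{\eta}}a\|_{L^\infty(D)}M_{\boldsymbol{\eta}}$, where $T_1(\boldsymbol{\nu})$ is the first term already estimated in \eqref{term-1}. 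Denote by $C_1$ the constant multiplying $\rho^{|\boldsymbol{\nu}|-1}[\tfrac{1}{2}]_{|\boldsymbol{\nu}|}(|\boldsymbol{\nu}|!)^{\delta-1}\boldsymbol{R}^{-\boldsymbol{\nu}}$ in \eqref{term-1}, and set $P:=C_m\overline{f}+\overline{b}(C_m^{-1}\overline{u})^m+C_m\overline{a}\,\overline{u}$, the quantity controlling the base case \eqref{C_delta}, so that $M_{\boldsymbol{0}}\leq\tfrac{1}{2}P$. Comparing the definitions then reveals the key algebraic identity $C_\Delta=C_1+\tfrac{\overline a}{2}P$, which drives the whole induction. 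For the base case $|\boldsymbol{\nu}|=1$ the sum contains only $\boldsymbol{\eta}=\boldsymbol{0}$, and combining $T_1\leq C_1[\tfrac{1}{2}]_1\boldsymbol{R}^{-\boldsymbol{\nu}}$ with $\|\partial^{\boldsymbol{\nu}}a\|_{L^\infty(D)}M_{\boldsymbol{0}}\leq\tfrac{\overline a}{2}P[\tfrac{1}{2}]_1\boldsymbol{R}^{-\boldsymbol{\nu}}$ gives $M_{\boldsymbol{\nu}}\leq C_\Delta[\tfrac{1}{2}]_1\boldsymbol{R}^{-\boldsymbol{\nu}}$, dominated by the asserted bound.

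For the inductive step I would isolate the $\boldsymbol{\eta}=\boldsymbol{0}$ contribution of the sum and group it with $T_1(\boldsymbol{\nu})$. Using \eqref{term-1} for $T_1$, the bound \eqref{abf1_assumption_VTEX1} on $\|\partial^{\boldsymbol{\nu}}a\|_{L^\infty(D)}$ (controlled through the larger $W^{1,\infty}(D)$-norm), and $M_{\boldsymbol{0}}\leq\tfrac{1}{2}P$, these two terms together are bounded by $C_\Delta\,\tilde\rho^{|\boldsymbol{\nu}|-1}[\tfrac{1}{2}]_{|\boldsymbol{\nu}|}(|\boldsymbol{\nu}|!)^{\delta-1}\boldsymbol{R}^{-\boldsymbol{\nu}}$, where $\tilde\rho\geq\rho$ absorbs $\rho^{|\boldsymbol{\nu}|-1}$ and $\tilde\rho\geq 1$ supplies the missing power for the $\boldsymbol{\eta}=\boldsymbol{0}$ term. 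This produces exactly one half of the target constant $2C_\Delta$.

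The remaining sum over $\boldsymbol{0}<\boldsymbol{\eta}<\boldsymbol{\nu}$ is where the inductive hypothesis enters. Inserting $M_{\boldsymbol{\eta}}\leq 2C_\Delta\,\tilde\rho^{|\boldsymbol{\eta}|-1}[\tfrac{1}{2}]_{|\boldsymbol{\eta}|}(|\boldsymbol{\eta}|!)^{\delta-1}\boldsymbol{R}^{-\boldsymbol{\eta}}$ together with \eqref{abf1_assumption_VTEX1} for $\|\partial^{\boldsymbol{\nu}-\boldsymbol{\eta}}a\|_{L^\infty(D)}$, the factorial powers collapse by \eqref{multiindex-est-1} to $(|\boldsymbol{\nu}|!)^{\delta-1}$, the factor $\tilde\rho^{|\boldsymbol{\eta}|-1}$ is bounded by $\tilde\rho^{|\boldsymbol{\nu}|-2}$ since $|\boldsymbol{\eta}|\leq|\boldsymbol{\nu}|-1$, and the falling-factorial convolution $\sum_{\boldsymbol{0}<\boldsymbol{\eta}<\boldsymbol{\nu}}\binom{\boldsymbol{\nu}}{\boldsymbol{\eta}}[\tfrac{1}{2}]_{|\boldsymbol{\nu}-\boldsymbol{\eta}|}[\tfrac{1}{2}]_{|\boldsymbol{\eta}|}$ is controlled by $2[\tfrac{1}{2}]_{|\boldsymbol{\nu}|}$ via \eqref{multiindex-est-7}. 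This bounds the sum by $4C_\Delta\overline a\,\tilde\rho^{|\boldsymbol{\nu}|-2}[\tfrac{1}{2}]_{|\boldsymbol{\nu}|}(|\boldsymbol{\nu}|!)^{\delta-1}\boldsymbol{R}^{-\boldsymbol{\nu}}$, and the choice $\tilde\rho=\max(4\overline a,\rho)$ gives $4\overline a\,\tilde\rho^{|\boldsymbol{\nu}|-2}\leq\tilde\rho^{|\boldsymbol{\nu}|-1}$, contributing the second half $C_\Delta\,\tilde\rho^{|\boldsymbol{\nu}|-1}[\tfrac{1}{2}]_{|\boldsymbol{\nu}|}(|\boldsymbol{\nu}|!)^{\delta-1}\boldsymbol{R}^{-\boldsymbol{\nu}}$. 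Adding the two halves closes the induction with the factor $2C_\Delta$ as claimed.

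I expect the main obstacle to be not the estimation itself but the bookkeeping that makes it close sharply: one must recognize the identity $C_\Delta=C_1+\tfrac{\overline a}{2}P$, so that the separated $\boldsymbol{\eta}=\boldsymbol{0}$ term upgrades the constant $C_1$ of \eqref{term-1} into $C_\Delta$, and one must check that the single parameter $\tilde\rho=\max(4\overline a,\rho)$ simultaneously serves three roles — dominating $\rho$, supplying spare powers of a quantity $\geq 1$, and absorbing the factor $4\overline a$ from the convolution bound. The factor $2$ in \eqref{Delta-u-derivative} is not cosmetic: it is exactly the headroom needed to accommodate both the source block and the recursive block, and dropping it would break the induction.
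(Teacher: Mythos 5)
Your proposal is correct and follows essentially the same route as the paper's proof: the same induction driven by \eqref{vnu-Delta-u}, the same grouping of the source term \eqref{term-1} with the $\boldsymbol{\eta}=\boldsymbol{0}$ contribution via the identity $C_{\Delta}=C_{1}+\tfrac{\overline a}{2}P$, the same use of \eqref{multiindex-est-1} and \eqref{multiindex-est-7} on the recursive block, and the same absorption of $4\overline a$ by $\tilde\rho=\max(4\overline a,\rho)$ to split the target constant $2C_{\Delta}$ into two halves. No gaps.
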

\begin{proof}
We prove the statement by induction. For
$\boldsymbol{\nu }= \boldsymbol{e}$ we have by \reftext{\eqref{C_delta}} and \reftext{\eqref{term-1}}
\begin{equation*}
	\begin{split} C_{m}^{2}\|\partial ^{\boldsymbol{e}}\Delta u\|_{L^{2}(D)}
		&\leq \| \partial ^{\boldsymbol{e}}( bu^{m} - C_{m}^{2}\nabla a
		\cdot \nabla u - C_{m} f)\|_{L^{2}(D)} + \left \|{\partial ^{
				\boldsymbol{e}}\nabla a}\right \|_{L^{\infty }(D)}C_{m}^{2} \left \|{
			\Delta u}\right \|_{L^{2}(D)}
		\\
		&\leq \big ( \overline{b}\,(C_{m}^{-1}\overline{u})^{m} + C_{m}
		\overline{a}\, \overline{u} + (3C_{2m-1} C_{m}^{-1} C_{u})^{m}
		\overline{b} + 3C_{m} C_{u} \overline{a} +C_{m}\overline{f} \big )
		\frac{ \left [\tfrac{1}{2} \right ]_{1} }{\boldsymbol{R}^{\boldsymbol{e}}}
		+ \frac{1}{2}\big ( C_{m}\overline{f} + \overline{b}\,(C_{m}^{-1}
		\overline{u})^{m} + C_{m} \overline{a}\, \overline{u}\big )
		\frac{ \overline{a}\left [\tfrac{1}{2} \right ]_{1} }{\boldsymbol{R}^{\boldsymbol{e}}}\\
		&
		= {C}_{\Delta }
		\frac{ \left [\tfrac{1}{2} \right ]_{1} }{\boldsymbol{R}^{\boldsymbol{e}}}
		\leq 2 {C}_{\Delta }
		\frac{ \left [\tfrac{1}{2} \right ]_{1} }{\boldsymbol{R}^{\boldsymbol{e}}}.
	\end{split}
\end{equation*}
The aim is now to prove \reftext{\eqref{Delta-u-derivative}}, assuming that it is
valid for all derivatives of order strictly less than
$\boldsymbol{\nu }$. From \reftext{\eqref{vnu-Delta-u}} and the bound below it, we
readily have for $|\boldsymbol{\nu }|\geq 2$
\begin{equation*}
	\begin{split} C_{m}^{2} \|\partial ^{\boldsymbol{\nu }} \Delta u\|_{L^{2}(D)}
		&
		\leq \|\partial ^{\boldsymbol{\nu }} (b u^{m} - C_{m}^{2}
		\nabla a \cdot \nabla u - C_{m}\,f)\|_{L^{2}(D)} + \|\partial ^{
			\boldsymbol{\nu }}\nabla a\|_{L^{\infty }(D)} C_{m}^{2} \|\Delta u\|_{L^{2}(D)} \\
		& \qquad + \sum _{\boldsymbol{0}< \boldsymbol{\eta }<
			\boldsymbol{\nu }} \binom{\boldsymbol{\nu }}{\boldsymbol{\eta }} \|
		\partial ^{\boldsymbol{\nu }-\boldsymbol{\eta }}\nabla a\|_{L^{
				\infty }(D)} C_{m}^{2} \|\partial ^{\boldsymbol{\eta }}\Delta u\|_{L^{2}(D)}
		\\
		&\leq C_{\Delta }
		\frac{ \rho ^{|\boldsymbol{\nu }|-1} \left [\tfrac{1}{2} \right ]_{|\boldsymbol{\nu }|} }{\boldsymbol{R}^{\boldsymbol{\nu }}}(|
		\boldsymbol{\nu }|!)^{\delta -1} + \sum _{
			\boldsymbol{0}< \boldsymbol{\eta }<
			\boldsymbol{\nu } } \binom{\boldsymbol{\nu }}{\boldsymbol{\eta }}
		\frac{\overline{a}\left [\tfrac{1}{2} \right ]_{|\boldsymbol{\nu }-\boldsymbol{\eta }|} }{\boldsymbol{R}^{\boldsymbol{\nu }-\boldsymbol{\eta }}}
		(|\boldsymbol{\nu }-\boldsymbol{\eta }|!)^{\delta -1}
		\frac{2{C}_{\Delta }\tilde{\rho}^{|\boldsymbol{\eta }|-1}\left [\tfrac{1}{2} \right ]_{|\boldsymbol{\eta }|} }{\boldsymbol{R}^{\boldsymbol{\eta }}}
		(|\boldsymbol{\eta }|!)^{\delta -1}
		\\
		&\leq 2 {C}_{\Delta }
		\frac{ \tilde \rho ^{|\boldsymbol{\nu }|-1} \left [\tfrac{1}{2} \right ]_{|\boldsymbol{\nu }|} }{\boldsymbol{R}^{\boldsymbol{\nu }}}(|
		\boldsymbol{\nu }|!)^{\delta -1} \big(\frac{1}{2} +
		\frac{2\overline{a}}{\tilde \rho}\big).
	\end{split}
\end{equation*}
According to the definition of $\tilde \rho $ the term in the parentheses
is bounded by 1 and the claim of the theorem follows.
\end{proof}

Finally we remark that
$(\|u\|_{L^{2}(D)}^{2} + \|\Delta u\|_{L^{2}(D)}^{2})^{1/2}$ is an equivalent
norm on $H^{2}(D) \cap H^{1}_{0}(D)$. Moreover, if $D$ is convex, the solution
$u \in H^{1}_{0}(D)$ to \reftext{\eqref{variational_form_VTEX1}} is in $H^{2}(D)$ by the
linear elliptic regularity theory, see e.g. \cite{Grisvard1985}. Indeed,
the solution $u\in H^{1}_{0}(D)$ to \reftext{\eqref{variational_form_VTEX1}} can be seen
as the unique solution of
\begin{equation*}
\int _{D} a \nabla w \cdot \nabla v = \int _{D} \tilde f v, \qquad
\forall v \in H^{1}_{0}(D),
\end{equation*}
where $\tilde f := C_{m}^{-1} f - C_{m}^{-2} b u^{m}$. Since
$H^{1}_{0}(D)$ is continuously embedded in $L^{2m}(D)$ and
$\tilde f \in L^{2}(D)$, the linear elliptic regularity theory in convex
domains applies. Recalling \reftext{\eqref{ff-estimates}},
for all
$\boldsymbol{\nu }\in \mathcal F\setminus \left \{\boldsymbol{0}
\right \}$ we have as a result in this case
\begin{equation*}
\|\partial ^{\boldsymbol{\nu }}u\|_{H^{2}(D)} \lesssim (\|\partial ^{
	\boldsymbol{\nu }}u\|_{L^{2}(D)}^{2} + \|\Delta \partial ^{
	\boldsymbol{\nu }} u\|_{L^{2}(D)}^{2})^{1/2} \leq ((C_{1}C_{m}^{-1}C_{u})^{2}
+ (2 C_{m}^{-2} C_{\Delta })^{2})^{1/2}
\frac{ (|\boldsymbol{\nu }|!)^{\delta}}{(\boldsymbol{R}/\tilde \rho )^{\boldsymbol{\nu }}},
\end{equation*}
where $C_{1}$ is the constant of the Sobolev embedding
$H^{1}_{0}(D)\hookrightarrow L^{2}(D)$, i.e.
$\left \|{v}\right \|_{L^{2}(D)}\leq C_{1} \left \|{ \nabla v}\right
\|_{L^{2}(D)}$.

%s5 #&#
\section{Applications and numerical experiments}
%%LEAP%%%\label{sec5}
\label{sec:_num_exp_VTEX1}

In this section we give two numerical examples that demonstrate how the
abstract regularity results of \reftext{Theorem~\ref{gevrey_regularity_for_semilinear_VTEX1}} and \reftext{Theorem~\ref{thm:2}} can be applied
to mathematically predict convergence of numerical methods for nonlinear
reaction-diffusion problems under uncertainty.

%s5.1 #&#
\subsection{Gauss-Legendre quadrature}
%%LEAP%%%\label{sec5.1}
\label{sec:_Gauss-Legendre_Quadrature_VTEX1}

According to \reftext{Theorem~\ref{gevrey_regularity_for_semilinear_VTEX1}}, under appropriate
assumptions the solution of the semilinear problem \reftext{\eqref{variational_form_VTEX1}} inherits the Gevrey-$\delta $ class regularity
from the coefficients and the right-hand side of this equation. To demonstrate
this result in a numerical experiment, we first consider a simple model
problem with a single scalar parameter $y$ being a random variable uniformly
distributed in $[-1,1]$. The quantity of interest is defined as the integral
%
%e56 #&#
\begin{equation}
\label{Elambda}
I({\mathcal{G}}) := \int _{-1}^{1} {\mathcal{G}}(y) \, dy,
%%LEAP%%%\label{eq56}
\end{equation}
where the functional
${\mathcal{G}}(y) := \int _{D} u(\boldsymbol{x}, y) \, d
\boldsymbol{x}$ is the average value of the solution $u$ in the computational
domain $D= (0,1)^{2}$. Specifically, we consider the cubic nonlinearity
$m=3$,
the forcing term
$f= 3(\cos (2 \pi x_{1})+1)(\cos (3\pi x_{2})+1)$, the unit diffusion
coefficient $a\equiv 1$ and the reaction coefficient $b$ being either
%
%e57 #&#
\begin{equation}
\label{def-b1}
b^{(1)} (\boldsymbol{x},y) = 50(\cos ^{2}(15\pi x_{1}+y^{10})+1)(
\cos ^{2}(17\pi x_{2}+y^{25})+1),
%%LEAP%%%\label{eq57}
\end{equation}
or
%
%e58 #&#
\begin{equation}
\label{def-b2}
b^{(2)} (\boldsymbol{x},y) = \left (\exp \left (-
\frac{x_{1}^{2}+x_{2}^{2}}{y+1}\right )+1\right ) (\cos ^{2}(15\pi x_{1})+1)(
\cos ^{2}(17\pi x_{2})+1).
%%LEAP%%%\label{eq58}
\end{equation}
Since $m$ is odd and both $b^{(1)}$ and $b^{(2)}$ are nonnegative, \reftext{Assumption~\ref{Possitive_b_Assump_VTEX1}} is valid and therefore corresponding solutions
$u^{(1)}$ and $u^{(2)}$ of \reftext{\eqref{variational_form_VTEX1}} are uniquely determined
in $H^{1}_{0}(D)$ for every~$y\in [-1,1]$. Recall that
$H^{1}_{0}(D)$ is continuously embedded in $L^{1}(D)$ and therefore, by
\reftext{Theorem~\ref{gevrey_regularity_for_semilinear_VTEX1}}
%
%e59 #&#
\begin{equation}
|\partial ^{\boldsymbol{\nu }}{\mathcal{G}}| \leq \|\partial ^{
	\boldsymbol{\nu }} u \|_{L^{1}(D)} \leq C_{0} \|\partial ^{
	\boldsymbol{\nu }} u \|_{H^{1}_{0}(D)} \leq C_{0}C_{m}^{-1}C_{u}
\frac{(|\boldsymbol{\nu }|!)^{\delta}}{(\boldsymbol{R}/\rho )^{\boldsymbol{\nu }}},
\label{eq59}
\end{equation}
i.e.
${\mathcal{G}}={\mathcal{G}}^{(k)}$ is Gevrey-$
\delta $ regular, where $\delta =\delta ^{(k)}$ is determined by
$b^{(k)}$, $k = 1,2$. More precisely, we have
\begin{align*}
b^{(1)}&
\in G^{1}([-1,1],L^{\infty }(D)),\quad {\mathcal{G}}^{(1)}
\in G^{1}([-1,1],\mathbb R),\,
\\
b^{(2)}&
\in G^{2}([-1,1],L^{\infty }(D)),\quad {\mathcal{G}}^{(2)}
\in G^{2}([-1,1],\mathbb R).
\end{align*}
We denote by $\{\xi _{j},w_{i}\}_{j=1}^{n}$ the Gauss-Legendre quadrature
points and weights and hereby the quadrature approximation to the quantity
of interest~\reftext{\eqref{Elambda}}
%
%e60 #&#
\begin{equation}
Q_{n}[{\mathcal{G}}] := \sum _{j=1}^{n} w_{j}
{\mathcal{G}}(\xi _{i}).
\label{eq60}
\end{equation}
According to \cite[Theorem 5.2]{ChernovSchwab2012} the asymptotic quadrature
error is determined by the Gevrey class parameter of the data, that is
$\delta ^{(1)} = 1$ for \reftext{\eqref{def-b1}} and $\delta ^{(2)} = 2$ for \reftext{\eqref{def-b2}}: $\exists C,r > 0$ independent on $n$ such that
%
%e61 #&#
\begin{equation}
\label{eps2}
\varepsilon _{n}^{(k)} := |I({\mathcal{G}}^{(k)}) - Q_{n}[{\mathcal{G}}^{(k)}]|
\leq C \exp ( - r n^{1/\delta ^{(k)}}), \qquad k=1,2.
%%LEAP%%%\label{eq61}
\end{equation}
Observe in particular that unlike $b^{(1)}$, the coefficient
$b^{(2)}$ is not analytic near $y=-1$, see e.g.
\cite[Section 5.2]{ChernovSchwab2012}), and this can be immediately detected
by the reduced asymptotic convergence rate for the case~$k=2$.

At every quadrature point $y = \xi _{i}$, we solve the variational problem \reftext{\eqref{variational_form_VTEX1}} with the finite element method on a uniform triangular
mesh with the mesh size $h=\sqrt{2}/128$. Since \reftext{\eqref{variational_form_VTEX1}} is a nonlinear problem, we use the fixed-point
iteration method \reftext{\eqref{contracting_mapping_VTEX1}} with an absolute error tolerance
of $10^{-14}$ with respect to the $H^{1}_{0}(D)$-seminorm. Since
$I({\mathcal{G}}(u))$ is not available in closed form, we replace it by very
fine quadrature approximations:
$I^{*}({\mathcal{G}}(u^{(1)}))=Q_{50}[{\mathcal{G}}(u^{(1)})]$ and
$I^{*}({\mathcal{G}}(u^{(2)}))=Q_{150}[{\mathcal{G}}(u^{(2)})]$ based on
$n=50$ and $n=150$ Gauss-Legendre points respectively.

%f1 #&#
\begin{figure*}
\includegraphics[width=\textwidth]{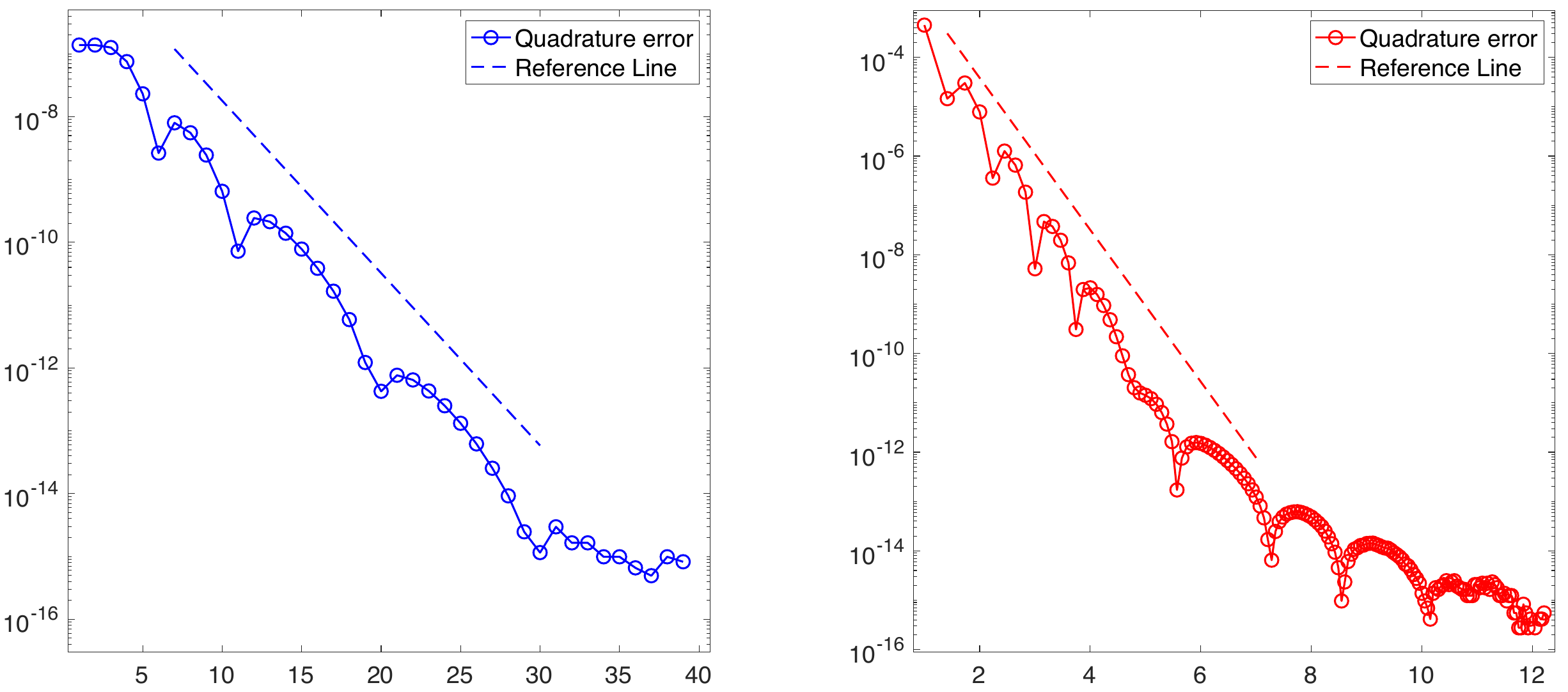}
\caption{Quadrature error $\varepsilon ^{(1)}_{n}$ (left) with respect to the
	number $n$ of quadrature points and Quadrature error $\varepsilon ^{(2)}_{n}$
	(right) with respect to $N = n^{1/2}$.}
\label{fig1}
\end{figure*}

In the left panel of \reftext{Fig.~\ref{fig1}}, the data points represent the quadrature
error $\varepsilon _{n}^{(1)}$ with respect to number of quadrature points
$n$ and the reference line (dashed line) shows the linear trend in semi-logarithmic
scale. This agrees with \reftext{\eqref{eps2}} and $\delta ^{(1)}=1$. Similarly,
the data points in the right panel of \reftext{Fig.~\ref{fig1}} represent the quadrature
error $\varepsilon _{n}^{(2)}$ with respect to the square root of the number
of quadrature points $N :=n^{1/ \delta ^{(2)}} = n^{1/2}$ in the semi-logarithmic
scale. The reference line (dashed line) also shows the linear trend in
semi-logarithmic scale. Clearly, this confirms \reftext{\eqref{eps2}} with
$\delta ^{(2)}=2$ and thereby demonstrates the meaning and validity of
\reftext{Theorem~\ref{gevrey_regularity_for_semilinear_VTEX1}}.

%s5.2 #&#
\subsection{Quasi-Monte Carlo method for Gevrey functions}
%%LEAP%%%\label{sec5.2}
\label{sec:_QMC_VTEX1}

In this section we give an example for the case of high-dimensional parametric
integration and demonstrate an application of \reftext{Theorem~\ref{thm:2}}. For
this, we consider the variational problem \reftext{\eqref{variational_form_VTEX1}} with
$s$ parameters $\boldsymbol{y}= (y_{1},\dots ,y_{s})$ and the quantity
of interest being the integral
%
%e62 #&#
\begin{align}
\label{approx_integral_VTEX1}
I_{s}(F) = \int _{U} F(
\boldsymbol{y}) \, d y_{1} \dots dy_{s},
%%LEAP%%%\label{eq62}
\end{align}
where the functional
$F(\boldsymbol{y}):=u(\boldsymbol{x}_{0},
\boldsymbol{y})$ is the point-evaluation of the solution $u$ in the center
of the computational domain $D= (0,1)^{2}$ and the $s$-dimensional unit
cube $U = [-\frac{1}{2}, \frac{1}{2}]^{s}$ is the parameter domain. Such
problems arise when the coefficients of \reftext{\eqref{variational_form_VTEX1}} are modelled
as general random fields e.g. via their Karhunen-L\`oeve expansion
\cite{HansenSchwab13,CohenDevoreSchwab2011,KuoSchwabSloan2013,KuoSchwabSloan2012}
and then truncated to a certain dimension $s$ for computational reasons.
For further comments on the analysis of the truncation error for Gevrey-class
parametrizations we refer to \cite[Section 6.2]{ChernovLe2023} and further
references therein.

In this section we fix the dimension of the parameter domain at
$s = 100$ and concentrate on estimation of $I_{s}(u)$ where $u$ is a solution
for the variational formulation \reftext{\eqref{variational_form_VTEX1}} with
$a\equiv 1$, $f\equiv 1$, the reaction coefficient $b$ being one of following
functions
%
%e63 #&#
\begin{equation}
\label{QMC-a1-new}
b^{(1)} (\boldsymbol{x},\boldsymbol{y}) = 2+2\exp \left (-{\zeta (5)}+
\sum _{j=1}^{s} {j^{-5}} \sin (j\pi x_{1})\sin (j\pi x_{2})\,y_{j}
\right ),
%%LEAP%%%\label{eq63}
\end{equation}
or
%
%e64 #&#
\begin{equation}
\label{QMC-a2-new}
b^{(2)} (\boldsymbol{x},\boldsymbol{y}) = 3+\frac{1}{\zeta (5)} \sum _{j=1}^{s}
j^{-5} \sin (j\pi x_{1})\sin (j\pi x_{2})\exp \left ( -
\frac{1}{y_{j}+\tfrac{1}{2}}\right ).
%%LEAP%%%\label{eq64}
\end{equation}
Here $y_{j}$ are independent uniformly distributed in
$[-\frac{1}{2},\frac{1}{2}]$ random variables for all
$1 \leq j\leq s$. Notice that $b^{(1)}$ is analytic with respect to
$\boldsymbol{y}$ ($\delta ^{(1)} = 1$), whereas $b^{(2)}$ is of Gevrey-$
\delta $ class with $\delta ^{(2)} = 2$,
i.e. $ b^{(1)} \in G^{1}(U,L^{\infty }(D))$ and
$ b^{(2)} \in G^{2}(U,L^{\infty }(D))$. Notice also that both
$b^{(1)}$ and $b^{(2)}$ are $\ell ^{p}$-summable with $p$ less than one.
More precisely, with any $p > \frac{1}{5}$. This will be important for
the convergence of the Quasi-Monte-Carlo (QMC) methods,
as explained below.

\reftext{Theorem~\ref{gevrey_regularity_for_semilinear_VTEX1}} and \reftext{Theorem~\ref{thm:2}} imply that $F(\boldsymbol{y})$ is Gevrey-$
\delta $ regular with the same $\delta $ as for $b^{(1)}$ and
$b^{(2)}$. This follows since $H^{2}(D)$ is continuously embedded in
$L^{\infty}(D)$ and
$(\|u\|_{L^{2}(D)}^{2} + \|\Delta u\|_{L^{2}(D)}^{2} )^{1/2}$ is an equivalent
norm on $H^{2}(D)$.

The quantity of interest for the coefficient $b^{(k)}$ will be denoted
by $I_{s}(F^{(k)})$, $k = 1,2$. The standard Monte
Carlo (MC) quadrature is the sample mean approximation
$Q^{MC}_{s,n}(F) := \frac{1}{n} \sum _{i=1}^{n}
F(\boldsymbol{y}^{(i)})$, where
$\boldsymbol{y}^{(i)}$ are independent samples uniformly distributed
in $U$. It satisfies the classical error estimate for the relative root
mean square error
%
%e65 #&#
\begin{equation}
\label{errorMC}
\varepsilon ^{\textrm{MC},(k)}_{n} :=
\sqrt{\mathbb E\left ( \left |\frac{I_{s}(F^{(k)})-Q^{MC}_{s,n}(F^{(k)})}{I_{s}(F^{(k)})}\right |^{2}\right )}
\lesssim C n^{-\frac{1}{2}}
%%LEAP%%%\label{eq65}
\end{equation}
for a positive constant $C$ independent of $n$. Observe that the asymptotic
error bound is insensitive to the Gevrey class regularity of
$F$. As an alternative to the standard
(MC) quadrature, we consider a class of
QMC rules called randomly shifted rank-1 lattice rules
defined as
%
%e66 #&#
\begin{align}
\label{QMC_quad_def_VTEX1}
Q^{\Delta}_{s,n}(F) := \frac{1}{n} \sum _{i=1}^{n}
F\big(\left \{ \tfrac{i \boldsymbol{z}_{s}}{n} +
\Delta \right \} -\tfrac{1}{2}\big).
%%LEAP%%%\label{eq66}
\end{align}
Here the braces in \reftext{\eqref{QMC_quad_def_VTEX1}} indicate the fractional part of
each component of the argument vector, see for e.g.
\cite{KuoNuyens2016}, \cite{KuoSchwabSloan2012}. The quadrature points
are constructed using a generating vector
$\boldsymbol{z}_{s} \in \mathbb N^{s}$, and a random shift $\Delta $, which
is a random variable uniformly distributed over the cube $(0,1)^{s}$. Since
$Q^{\Delta}_{s,n}(G)$ is also a random variable, we use the relative root
mean square error (RMSE) as a measure of accuracy
%
%e67 #&#
\begin{align}
\label{error-deco}
\varepsilon ^{QMC,(k)}_{n} :=
\sqrt{\mathbb{E}\left (\bigg|\frac{I_{s}(F^{(k)})-Q^{\Delta}_{s,n}(F^{(k)})}{I_{s}(F^{(k)})}\bigg|^{2}\right )},
%%LEAP%%%\label{eq67}
\end{align}
where $\mathbb E$ stands for the expectation with respect to the random
shifts $\Delta $. The analysis for RMSE, when the integrand is analytic
(of Gevrey class with $\delta =1$) can be found in
\cite{KuoNuyens2016} and \cite{KuoSchwabSloan2012}. In
\cite[Lemma 7.4]{ChernovLe2023} we extended this result to integrands that
may belong to Gevrey-$\delta $ class with $\delta > 1$. According to that,
the RMSE of the QMC quadrature for a fixed integer $s$ and $n$ being a power
of 2 admits the bound
%
%e68 #&#
\begin{equation}
\label{errorQMC}
\varepsilon ^{QMC,(k)}_{n} \leq C_{s,\vartheta} n^{-
	\frac{1}{2\vartheta}},
%%LEAP%%%\label{eq68}
\end{equation}
where $C_{s,\vartheta}$ is independent of $n$ and
\begin{align*}
\vartheta = \left \{
\begin{matrix}
	\omega &\text{for any } \omega \in (\tfrac{1}{2},1) & \text{when } p
	\in (0,\tfrac{2}{3\delta}],
	\\
	\frac{\delta p}{2-\delta p} & & \text{when } p\in (\tfrac{2}{3\delta},
	\tfrac{1}{\delta}] .
\end{matrix}\right .
\end{align*}

Recall that the summability parameter $p$ can be chosen to satisfy
$\frac{1}{5}< p <\frac{2}{3 \delta ^{(k)}}$, $k= 1,2$ and therefore
$\vartheta $ can be close to~$\frac{1}{2}$ and thereby
$\varepsilon _{n}^{{\mathrm{MC}},(k)}$ is close to $n^{-1}$. This effect is
very well visible in the numerical experiments in \reftext{Fig.~\ref{fig2}}. These
results were obtained by solving the variational problem \reftext{\eqref{variational_form_VTEX1}} in each quadrature point by the finite element
method on a uniform mesh with the mesh size $h=\sqrt{2}/128$ and by means
of a fixed-point iteration \reftext{\eqref{contracting_mapping_VTEX1}} with the error tolerance
of $10^{-14}$ with respect to the $H^{1}_{0}(D)$-seminorm.

The outer expectation in \reftext{\eqref{errorQMC}} is approximated by the empirical
mean of $R = 8$ runs, i.e., for $\Delta ^{(j)}$, $1 \leq j \leq R$ being
an independent sample from the uniform distribution from the unit cube
$(0,1)^{s}$ and $Q^{(j)}_{s,n}(F^{(k)})$ the corresponding
QMC quadrature, we approximate the relative QMC error by
%
%e69 #&#
\begin{equation}
\varepsilon _{n}^{{\mathrm{QMC}},(k)}\sim
\sqrt{
	\frac{1}{R} \sum _{j=1}^{R}
	\left (\left |\frac{ I_{s}^{*}(F^{(k)})-Q^{(j)}_{s,n}(F^{(k)})}{I_{s}^{*}(F^{(k)})}\right |^{2}\right )}
\label{eq69}
\end{equation}
and analogously for the standard MC approximation
$\varepsilon _{n}^{{\mathrm{MC}},(k)}$. We choose the reference value
$I_{s}^{*}(F^{(k)})$ as QMC approximation with a very
high number of points, namely $n=2^{15}$, in both cases $k=1,2$.
It is worth noting that the relative error
$\varepsilon _{n}^{{\mathrm{QMC}},(k)}$ is independent of dimension $s$, as
shown in, for example, \cite[Lemma 7.4]{ChernovLe2023}.

\reftext{Fig.~\ref{fig2}} clearly demonstrates that predicted MC and QMC convergence
rates of $n^{-\frac{1}{2}}$ and about $n^{-1}$, cf. \reftext{\eqref{errorMC}} and \reftext{\eqref{errorQMC}}, are very well reproduced in the numerical experiments.

%f2 #&#
\begin{figure}
	\begin{center}
	\includegraphics[width=0.5\textwidth]{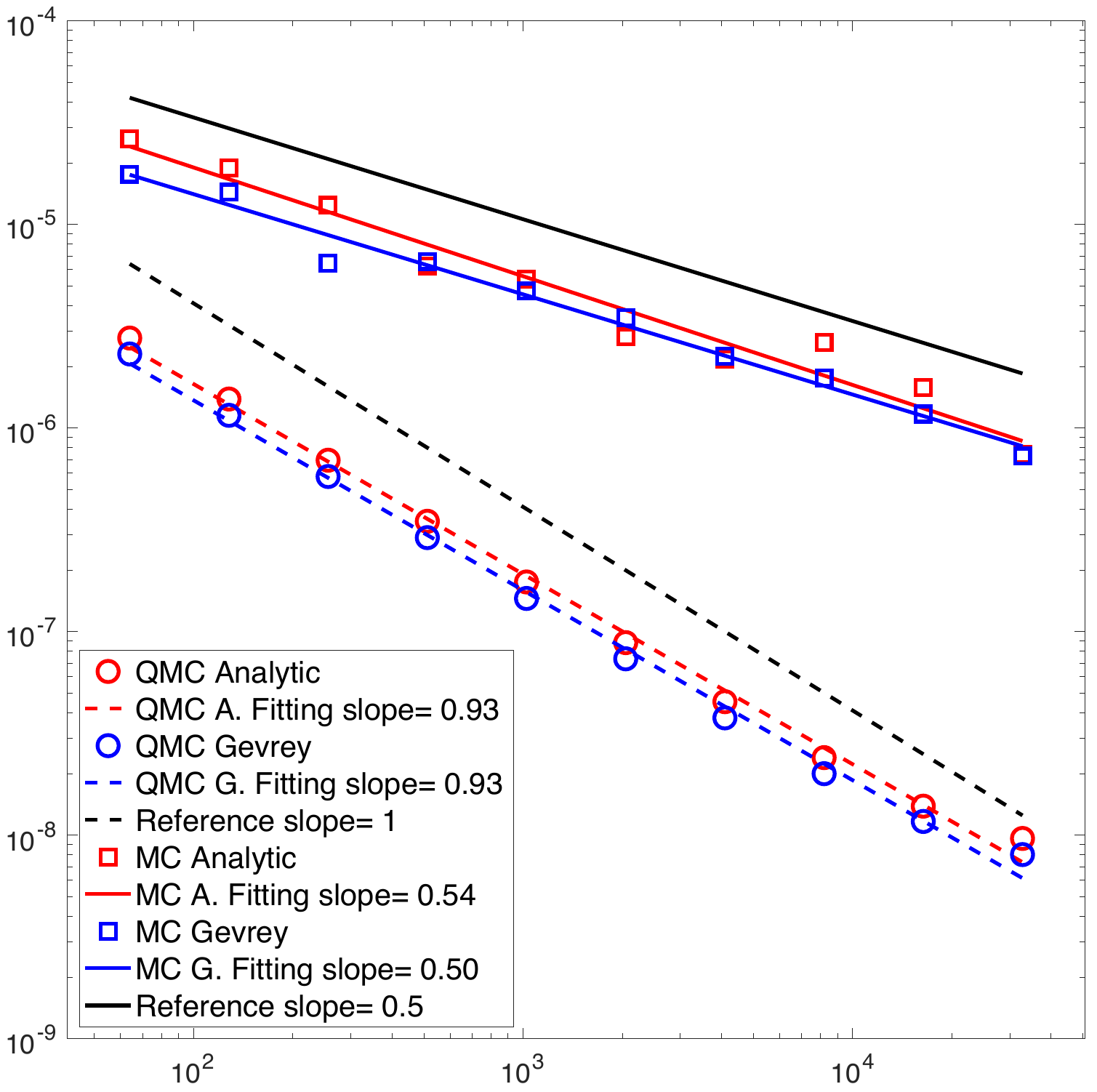}
	\end{center}
\caption{Convergence of the quadrature error with respect to the number of
	samples $n$ for four methods: QMC analytic ($\varepsilon
	^{{\mathrm{QMC}},(1)}_{n}$), QMC Gevrey ($\varepsilon
	^{{\mathrm{QMC}},(2)}_{n}$), MC analytic ($ \varepsilon
	^{{\mathrm{MC}},(1)}_{n}$), MC Gevrey ($\varepsilon ^{{\mathrm{MC}},(2)}_{n}$).}%
\label{fig2}
\end{figure}
%\appendix
%\section{Appendix}
%\input{Appendix}

%% The Appendices part is started with the command \appendix;
%% appendix sections are then done as normal sections
%% \appendix

%% \section{}
%% \label{}

%% For citations use: 
%%       \citet{<label>} ==> Jones et al. [21]
%%       \citep{<label>} ==> [21]
%%

%% If you have bibdatabase file and want bibtex to generate the
%% bibitems, please use
%%
\bibliographystyle{elsarticle-num-names} 
  %\bibliography{mybib}

%% else use the following coding to input the bibitems directly in the
%% TeX file.
%\bibliography{mybib}

%\begin{thebibliography}{00}

% \bibliographystyle{elsarticle-num-names} 
%\bibliography{mybib}

%\end{thebibliography}
\end{document}